\begin{document}
\begin{frontmatter}
\title{Convergence analysis of stochastic structure-preserving schemes for computing effective diffusivity 
	in random flows}
		\author[hku]{Junlong Lyu}
		\ead{u3005480@connect.hku.hk}
		\author[hku]{Zhongjian Wang}
		\ead{ariswang@connect.hku.hk}
		\author[uci]{Jack Xin}
		\ead{jxin@math.uci.edu}
		\author[hku]{Zhiwen Zhang\corref{cor1}}
		\ead{zhangzw@hku.hk}
		
		\address[hku]{Department of Mathematics, The University of Hong Kong, Pokfulam Road, Hong Kong SAR, China.}
		\address[uci]{Department of Mathematics, University of California at Irvine, Irvine, CA 92697, USA.}
		\cortext[cor1]{Corresponding author}

\begin{abstract}

In this paper, we develop efficient stochastic structure-preserving schemes to compute the effective
diffusivity for particles moving in random flows. We first introduce the motion of a passive tracer 
particle in random flows using the Lagrangian formulation, which is modeled by stochastic differential equations (SDEs).  Then, we propose stochastic structure-preserving schemes to solve the SDEs and provide rigorous convergence analysis for the numerical schemes in computing effective diffusivity. The convergence analysis follows a probabilistic approach, which interprets the solution process generated by our numerical schemes as a Markov process. By exploring the ergodicity of the solution process, we obtain a convergence analysis of our method in computing long-time solutions of the SDEs. Most importantly, our analysis result reveals the equivalence of the definition of the effective diffusivity by solving discrete-type and continuous-type (i.e. Eulerian) corrector problems, which is fundamental and interesting.  Finally, we present numerical results to demonstrate the accuracy and efficiency of the proposed method and investigate the convection-enhanced diffusion phenomenon in two- and three-dimensional incompressible random flows.  \\

\noindent \textit{\textbf{AMS subject classification:}} 37M25, 60J60, 60H35, 65P10, 65M75, 76M50.
\end{abstract}
\begin{keyword}
Convection-enhanced diffusion; random flows;  
structure-preserving schemes; 
corrector problem; ergodic theory; Markov process.	
\end{keyword}

\end{frontmatter}

\section{Introduction} \label{sec:introduction}
\noindent
\ {Diffusion enhancement in fluid advection has been studied for nearly a century since the pioneering work of Sir G. Taylor \cite{taylor1922diffusion}. It is a fundamental problem to characterize and quantify the large-scale effective diffusion in fluid flows containing complex and turbulent streamlines, which is of great theoretical and practical importance; see e.g. \cite{Fannjiang:94,Fannjiang:97,	Carmona1997homogenization,Yaulandim:1998,Majda:99,
	PavliotisStuart:07} and references therein. Its applications can be found in many physical and engineering sciences, including atmosphere science, ocean science, chemical engineering, and combustion.}

In this paper, we study the diffusion enhancement phenomenon for particles moving in random flows, which is described by the following passive tracer model, i.e., a stochastic differential equation (SDE) with a random drift,

\begin{align}\label{eqn:generalSDEDefD}
d{\bf X}(t) = {\bf b}(t,{\bf X}(t),\omega)dt + \sigma d{\bf w}(t), \quad  {\bf X}(0) = 0,   
\end{align}
where ${\bf X}(t) \in R^{d}$ is the position of the particle, $\sigma>0$ is the molecular diffusivity, and $\{{\bf w}(t)\}_{t\ge0}$ is the standard $d$-dimensional Brownian motion. Here the velocity field ${\bf b}(t,{\bf x},\omega)$, i.e., the random drift is modeled by a random field in order to mimic the energy spectra of the turbulent flow \cite{kraichnan1970diffusion,Majda:99}. Specifically, we assume ${\bf b}(t,{\bf x},\omega)$ is a zero mean, jointly stationary, ergodic vector random field over a certain probability space, where $\omega$ is an element of the probability space describing all possible environments. The randomness in ${\bf b}(t,{\bf x},\omega)$ is independent of the randomness in the Brownian motion ${\bf w}(t)$. In addition, we assume that the realizations of ${\bf b}(t,{\bf x},\omega)$ are almost surely divergence-free, i.e. $\nabla_{{\bf x}}\cdot {\bf b}(t,{\bf x},\omega) = 0$. To guarantee the existence of the solution to \eqref{eqn:generalSDEDefD}, ${\bf b}(t,{\bf x},\omega)$ should be at least almost surely locally Lipschitz in $\textbf{x}$. To design numerical schemes and carry our convergence analysis, we assume ${\bf b}(t,{\bf x},\omega)$ has certain regularity in the physical space; see Assumption \ref{assump-regularity-b}. We emphasize that since any statement, such as the effective diffusivity, involving statistical properties of the solution ${\bf X}(t)$, requires only convergence in law. Thus, the regularity assumption on the velocity field is natural and will facilitate our algorithm design and convergence analysis in this paper.


We are interested in studying the long-time large-scale behavior of the particles ${\bf X}(t)$ in \eqref{eqn:generalSDEDefD}. Namely, whether the motion of the particles  ${\bf X}(t)$ has a long-time diffusive limit? More specifically, let $X_{\epsilon}(t)\equiv\epsilon X(t/\epsilon^2)$ denote 
the rescaled process of \eqref{eqn:generalSDEDefD}. We want to find conditions under which $X_{\epsilon}(t)$
converges in law, as $\epsilon\rightarrow0$, to a new Brownian motion with a certain covariance matrix $D^{E}\in R^{d\times d}$, where $D^{E}$ is called the effective diffusivity matrix. This problem is referred to as the homogenization of time-dependent flow problem. 
  
\ {Computing the effective diffusivity matrix $D^E$ (i.e., homogenization of time-dependent flows) has been widely studied under various conditions on the flows}. For spatial-temporal periodic velocity fields and random velocity fields with short-range correlations, one can apply the homogenization theory  \cite{BensoussanLionsPapa:2011,Garnier:97,Oleinik:94,Stuart:08} to compute the effective diffusivity matrix $D^E$, where $D^E$ can be expressed in terms of particle ensemble average (Lagrangian framework) or an average of solutions to corrector problems (Eulerian framework). 

\ {The dependence of $D^E$ on the velocity field of the problem is highly nontrivial}. For time-independent Taylor-Green flows, the authors of \cite{StuartZygalakis:09} proposed a stochastic splitting method and calculated the effective diffusivity in the limit of vanishing molecular diffusion. For time-dependent chaotic flows, we proposed a Lagrangian-type numerical integrator to compute the effective diffusivity using structure-preserving schemes \cite{WangXinZhang:18}. In the subsequent work \cite{Zhongjian2018sharp}, we provided a sharp and uniform-in-time error estimate for the numerical integrator in computing the effective diffusivity. However, we point out that the method and the convergence analysis obtained in \cite{WangXinZhang:18,Zhongjian2018sharp} were designated for flows generated from separable and deterministic Hamiltonian only.

\ {For random flows with long-range correlations, the long-time large-scale behavior of the particle motion is complicated and difficult to study in general, since various forms of anomalous diffusion, such as super-diffusion and sub-diffusion may exist}. The interested reader is referred to the review paper \cite{Majda:99}, where  anomalous diffusion was obtained in exactly solvable models. 
See also \cite{eijnden2000generalized} for progress in understanding of the intermittency (i.e., the occurrence of large fluctuations in the velocity field on the small scales) for the passive scalar transport in a turbulent velocity field.

There are several theoretical works on homogenization of time-dependent random flows. Such results include, among others, \cite{Carmona1997homogenization} where the authors proved the existence of the effective diffusivity for a two-dimensional time-dependent incompressible Gaussian velocity field. In \cite{Yaulandim:1998,komorowski2001}, the authors proved the homogenization of convection-diffusion in a time-dependent, ergodic, incompressible random flow. In \cite{fannjiang1996diffusion,fannjiang1999Markovian}, the authors proved some necessary conditions under which the long-time behavior for convection-diffusion in a turbulent flow is diffusive. \ {There are some recent works on studying the effective diffusivity in random flows; see e.g. \cite{boi2015anomalous,chai2016comparative,boi2017eddy,loisy2018effective,roque2018adsorption,renaud2019dispersion}}. Those results show that the dependence of the effective diffusivity upon the molecular diffusion $\sigma$ and the velocity field ${\bf b}$ in the random flow is complicated and how to describe this dependence is very difficult in general. Additionally, it is difficult to study the existence of residual diffusivity for the passive tracer model \eqref{eqn:generalSDEDefD}. The residual diffusivity refers to the non-zero effective diffusivity in the limit of zero molecular diffusion $\sigma$.

\ {This motivates us to develop efficient numerical schemes so that we can compute the effective diffusivity of random flows}. Notice that these random flows are generated from \textit{non-separable Hamiltonian}, which are much more difficult than the problems studied in \cite{WangXinZhang:18}. In this work, we first propose an implicit structure-preserving scheme to solve the SDE \eqref{eqn:generalSDEDefD}, in order to deal with the non-separable Hamiltonian. Second, we provide a \textit{sharp error estimate} for the numerical scheme in computing effective diffusivity. Our analysis is based on a probabilistic approach. We interpret the solution process generated by our numerical scheme as a discrete Markov process, where the transition kernel can be constructed according to the numerical scheme in solving \eqref{eqn:generalSDEDefD}. By exploring the ergodicity of the solution process, we obtain a sharp convergence analysis for our method. Most importantly, our convergence analysis reveals the equivalence of the definition of the effective diffusivity by solving discrete-type and continuous-type (i.e. Eulerian) corrector problems; see Theorem \ref{thm:convergence}, which is fundamental and interesting. Finally, we present numerical results to demonstrate the accuracy of the proposed method in computing effective diffusivity for several incompressible random flows in both two- and three-dimensional space. 

To the best of our knowledge, this paper appears to be the first one in the literature to develop Lagrangian numerical methods to compute effective diffusivity in random flows through the connection with the Eulerian corrector problem. The probabilistic approach in the convergence analysis takes into account the ergodic nature of the solution process and leads to a sharp error estimate. Notice that if one chooses the Gronwall inequality in the error estimate, one cannot get rid of the exponential growth pre-factor in the error term, which makes the estimate not sharp. Moreover, the stochastic structure-preserving Lagrangian scheme enables us to investigate the convection-enhanced diffusion phenomenon in random flows. Especially, we can numerically study the dependence of the effective diffusivity in the regime of small molecular diffusion $\sigma$ and the setting of the velocity field ${\bf b}$ in the random flow.

The rest of the paper is organized as follows. In Section 2, we briefly review some existing results for diffusion in random flows and introduce the definition of effective diffusivity by solving 
a continuous-type corrector problem. In Section 3, we propose our stochastic structure-preserving schemes in computing effective diffusivity for the passive tracer model \eqref{eqn:generalSDEDefD}. In Section 4, we provide the convergence analysis for the proposed method based on a probabilistic approach. In addition, we show the equivalence of the definition of  effective diffusivity through the discrete-type and continuous-type corrector problems. In Section 5, we present numerical results to demonstrate the accuracy and efficiency of our method.  Concluding remarks are made in Section 6.

\section{Preliminaries}\label{sec:preliminaries}
\noindent
To make this paper self-contained, we give a brief review of existing results on convection-enhanced diffusion in random flows and the effective diffusivity. Since these are standard results, we adopt the notations that were used in \cite{fannjiang1996diffusion,fannjiang1999Markovian}. 

\subsection{Some formulations and results for diffusion in random flows} \label{sec:formulations}
\noindent 
\ {We first define a function space that satisfies stationary and ergodic property in $R^d$. Let $(\mathcal{X},\mathcal{H},P_0)$ be a probability space}. Let $\tau_{\bf{x}}$, ${\bf x} \in R^d$ be an almost surely continuous, jointly measurable group of measure preserving transformation on $\mathcal{X}$ with the following properties:
\begin{enumerate}[(T1)]
\item $\tau_{\bf{0}} = Id_{\mathcal{X}}$ and $\tau_{\bf x+y} = \tau_{\bf{x}} \tau_{\bf{y}}$, $\forall~{\bf x},{\bf y} \in R^d$.
\item The mapping $(\chi,\bf{x}) \mapsto \tau_{\bf{x}}\chi$ is jointly measurable.
\item $P_0(\tau_{\bf{x}}(A)) = P_0(A)$, for ${\bf x}\in R^d, A\in \mathcal{H}$.
\item $\lim_{\bf{x}\to0} P_0\big(\chi: |f \circ \tau_{\bf{x}}(\chi) - f(\chi) | \ge \eta\big) = 0,$ $\forall f \in L^2(\mathcal{X})$ and $\forall \eta > 0$.
\item If $P_0\big(A\Delta\tau_{\bf{x}}(A)\big) = 0$,  $\forall~{\bf x} \in R^d$, then $A$ is a trivial event, i.e., $P_0(A)$ is either 0 or 1.
\end{enumerate}
One can verify that $\tau_{\bf x}$ induces a strongly continuous group of unitary mapping $U^{\bf x}$ on $L^2(\mathcal{X})$, which satisfies 

\begin{align}
U^{\bf x}f(\chi) = f(\tau_{\bf x}(\chi)),\ f\in L^2(\mathcal{X}),~{\bf x}\in R^d. 
\end{align}
In addition, it is easily to find that the group $U^{\bf x}$ has $d$ independent, skew-adjoint generators $D_k:\mathcal{D}_k \to L^2(\mathcal{X})$ corresponding to directions $\bf{e}_k$, $k = 1,\cdots,d$.

We introduce some function spaces that are useful in the analysis. Let $C_b^m(\mathcal{X})$ be the space of functions $f$ in the intersection of the domains of $D^\alpha$ with \ {$||D^\alpha f||_{L^\infty(\mathcal{X})}<+\infty$, where $\alpha=(\alpha_1,...,\alpha_d)$ is a multi-index, each component $\alpha_i$ is a nonnegative integer, $\sum_{i=1}^d \alpha_i \le m$, and 
the partial derivative operator $D^\alpha = D_1^{\alpha_1}\circ D_2^{\alpha_2}\circ\cdots\circ D_d^{\alpha_d}$.} It is well known that $C_b^{\infty}(\mathcal{X}) = \cap_{m \ge 1} C_b^m(\mathcal{X})$ is dense in $L^p(\mathcal{X}),1\le p<+\infty$; see \cite{DedikSubin1982}. Let $L^2_0(\mathcal{X}) = \{f\in L^2(\mathcal{X})|{\bf E}_0 f = 0\}$, where ${\bf E}_0$ is the expectation associated with the probability measure $P_0$.

\ {Next, we incorporate the time variable and study the Markov property. Following setting is standard for a general Markov process.}

 Let $\Omega$ be the space of $\mathcal{X}$-valued continuous function $C([0,\infty);\mathcal{X})$ and let $\ell $ be its Borel $\sigma-$algebra. Let $P^t,\ t\ge0$, be a strongly continuous Markov semigroup on $L^2(\mathcal{X})$, which satisfies the following properties.
\begin{enumerate}[(P1)]
\item $P^t \bf{1} = \bf{1}$ and $P^t f \ge 0$, if $f \ge 0$.
\item $\int P^t f dP_0 = \int f dP_0$, for all $f \in L^2(\mathcal{X}),\ t \ge 0$.
\item ${\bf {\bf E}}_{\chi} [f(\theta_{t+h}(\omega))|\ell_{\le t}] = P^h F(\omega(t))$, where $F(\chi) := {\bf E}_\chi f$, for any $f \in L^1(\Omega)$, $t$, $h\ge0$, $\chi \in \mathcal{X}$.
\end{enumerate}
\ {In the property P3, ${\bf E}_{\chi}$ is the expectation associated with the probability measures $P_{\chi}$, which can be considered as the conditional probability for all events in $\ell$ condition on that initial point lies on $\chi$}. $\ell_{\le t}$ are the $\sigma$-algebras generated by events measurable up to time $t$, and $\theta_t(\omega)(\cdot):=\omega(\cdot + t),\  t\ge0$ is the standard shift operator on the path space $(\Omega, \ell)$. 

Moreover, we can define a measure $P$ on the path space $(\Omega, \ell)$ through 

\begin{align}
P(B) = \int P_{\chi} (B) P_0 (d\chi),\ B\in \ell
\end{align}
and define ${\bf E} $ to be the corresponding expectation operator with respect to the measure $P$. As a direct consequence of (T3) and (P2), we know that  $P$ is stationary:
\begin{proposition}\label{P-stationary}
$P$ is invariant under the action of $\theta_t$ and $\tau_{\bf x}$ for any $(t,{\bf x}) \in R^+ \times R^d$.
\end{proposition}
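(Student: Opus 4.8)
The plan is to verify invariance separately for the temporal shift $\theta_t$ and the spatial shift $\tau_{\bf x}$, and in each case to reduce the statement to an identity for bounded cylinder functions $f$ on $\Omega$. Since finite-dimensional cylinder sets form a $\pi$-system generating $\ell$, and both $P$ and its pushforward under $\theta_t$ (respectively $\tau_{\bf x}$) are probability measures, the uniqueness theorem for measures shows it suffices to prove ${\bf E}[f\circ\theta_t]={\bf E}[f]$ and ${\bf E}[f\circ\tau_{\bf x}]={\bf E}[f]$ on this class. Throughout I would write $F(\chi):={\bf E}_\chi f$ and use the defining relation $P(B)=\int P_\chi(B)\,P_0(d\chi)$, so that ${\bf E}[g]=\int {\bf E}_\chi[g]\,P_0(d\chi)$ for every bounded measurable $g$ on $\Omega$.

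For the temporal shift, the first step is to compute ${\bf E}_\chi[f\circ\theta_t]$ via the Markov semigroup property (P3). Specializing (P3) by conditioning on the initial $\sigma$-algebra $\ell_{\le 0}$ gives ${\bf E}_\chi[f(\theta_t(\omega))\mid \ell_{\le 0}]=P^tF(\omega(0))$, and taking ${\bf E}_\chi$ of both sides through the tower property yields ${\bf E}_\chi[f(\theta_t(\omega))]=P^tF(\omega(0))$. Since the path starts at $\chi$ under $P_\chi$, we have $\omega(0)=\chi$, whence ${\bf E}_\chi[f\circ\theta_t]=P^tF(\chi)$. Integrating against $P_0$ and invoking the stationarity property (P2), $\int P^tF\,dP_0=\int F\,dP_0$, I obtain ${\bf E}[f\circ\theta_t]=\int P^tF\,dP_0=\int F\,dP_0={\bf E}[f]$, which is the desired temporal invariance.

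For the spatial shift, the key structural input is the translation covariance of the transition mechanism, namely $P_\chi\circ\tau_{\bf x}^{-1}=P_{\tau_{\bf x}\chi}$, equivalently ${\bf E}_\chi[f\circ\tau_{\bf x}]={\bf E}_{\tau_{\bf x}\chi}[f]$, which reflects the spatial homogeneity of the environment dynamics generated by the stationary field ${\bf b}$. Granting this, I would write ${\bf E}[f\circ\tau_{\bf x}]=\int {\bf E}_\chi[f\circ\tau_{\bf x}]\,P_0(d\chi)=\int {\bf E}_{\tau_{\bf x}\chi}[f]\,P_0(d\chi)$ and change variables $\chi\mapsto\tau_{\bf x}\chi$. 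Because $\tau_{\bf x}$ is measure preserving by (T3), the pushforward of $P_0$ under $\tau_{\bf x}$ is again $P_0$, so the last integral equals $\int {\bf E}_{\chi'}[f]\,P_0(d\chi')={\bf E}[f]$, giving spatial invariance.

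I expect the main obstacle to lie in the spatial case: establishing the covariance relation $P_\chi\circ\tau_{\bf x}^{-1}=P_{\tau_{\bf x}\chi}$ rigorously, since, unlike the temporal case, there is no ready-made analogue of (P3) to quote, and one must trace it back to the translation invariance of the generator of the environment process (i.e.\ the commutation of $P^t$ with the unitary group $U^{\bf x}$). A secondary technical point common to both cases is the careful reduction to cylinder functions together with the justification of the tower-property manipulation used to pass from (P3) to an unconditional expectation; these steps are routine but must be stated precisely for the \emph{uniqueness-of-measures} argument to apply.
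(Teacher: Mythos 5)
Your proof is correct and follows essentially the same route as the paper, which gives no argument beyond asserting the proposition as a direct consequence of (T3) and (P2): your temporal computation via (P3) plus (P2), and your spatial change of variables via (T3), are precisely the fleshed-out version of that one-line justification. The covariance relation $P_\chi\circ\tau_{\bf x}^{-1}=P_{\tau_{\bf x}\chi}$ that you flag as the main obstacle is indeed not derivable from the listed axioms alone; the paper leaves it implicit as part of the space-time stationarity of the underlying framework inherited from \cite{fannjiang1999Markovian}, so your identification of this hidden input is, if anything, more careful than the original.
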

Let $L:\mathcal{D}(L) \to L^2(\mathcal{X})$ be the generator of the semigroup $P^t$. To establish the central limit theorem for the Markov process associated with $P^t$, we assume the generator $L$ satisfies the following time relaxation property, also known as the spectral gap condition,  

\begin{align}
-(Lf,f)_{L^2(\mathcal{X})}\ge c_1||f||^2_{L^2(\mathcal{X})}, ~\text{where}~ c_1>0.
\label{TimeRelaxation}
\end{align} 
The time relaxation property \eqref{TimeRelaxation}	is equivalent to the exponential decay property 

\begin{align}
||P^t f||_{L^2(\mathcal{X})} \le \exp(-2c_1 t) ||f||_{L^2(\mathcal{X})}, ~f\in L^2_0(\mathcal{X}).
\label{exponentialdecayproperty}
\end{align} 
In addition, time relaxation property \eqref{TimeRelaxation} is equivalent to $\rho$-mixing of the process 
$X(t)$, $t\geq 0$. Specifically, let 
$\rho(h) = \sup\{Cor(Y_1,Y_2): Y_1 ~\text{is}~ \ell_{\geq t+h}~\text{measurable}, 
Y_2 ~\text{is}~ \ell_{\leq t}~\text{measurable}\}$, where $Cor(Y_1,Y_2)$ is the correlation function. Then, \eqref{TimeRelaxation} or \eqref{exponentialdecayproperty} implies that $\lim_{h\rightarrow\infty}\rho(h)=0$; see \cite{rosenblatt2012markov,doukhan2012mixing}.
The time relaxation property \eqref{TimeRelaxation} (or the exponential decay property \eqref{exponentialdecayproperty}) plays an important role in proving the existing of the effective diffusivity. We will numerically investigate this property in Section \ref{sec:numericaltest}.

\subsection{The continuous-type corrector problem and effective diffusivity} \label{sec:EffectiveDiffusivity}
\noindent 
Equipped with the necessary properties and notations, we are ready to study the effective diffusivity 
of the random flows associated with the passive tracer model \eqref{eqn:generalSDEDefD}. First we assume that the random flow ${\bf b} = (b_1,...,b_d)\in(L^2(\mathcal{X}))^d$ is jointly continuous in $(t,\bf x)$, locally Lipschitzian in $\bf x$, with finite second moments, and is divergence-free. 


For each fixed realization $\omega$ of the environment, we consider the stochastic process generated by the following SDE,
\begin{equation}\label{omegadiffusion}
\left \{
\begin{aligned}
&d{\bf X}_t^{\omega} = {\bf b}(t,{\bf X}^{\omega}_t,\omega)dt + \sigma d{\bf w}_t,\\
&{\bf X}_0^{\omega} = 0,
\end{aligned}
\right.
\end{equation}
where ${\bf X}^{\omega}_t \in R^{d}$ is the position of the particle, the superscript in ${\bf X}_t^{\omega}$ means that it depends on the realization of the environment $\omega$, \ {${\bf w}_t$ is a standard Brownian motion starting at the origin. Its corresponding probability space is denoted by $(\Sigma, \mathcal{B}, Q)$ and the associated expectation operator is denoted by ${\bf M}$. The SDE \eqref{omegadiffusion} is well-defined \cite{fannjiang1999Markovian}}. Moreover, the random flow in \eqref{omegadiffusion} means ${\bf b}(t,{\bf x},\omega)={\bf b}(\tau_{{\bf x}}\omega(t))$. Viewed from a particle at any instant of time $t$, we can define an 
environment process $\eta:[0,\infty)\times\Omega\to\mathcal{X}$ as  
\begin{equation}\label{environment1}
\left \{
\begin{aligned}
&\eta(t) = \tau_{{\bf X}_t^{\omega}} \omega(t),\\
&\eta(0) = \omega(0).
\end{aligned}
\right.\end{equation}
In addition, environment process generates a semigroup of transformation 
\ {
\begin{equation}
S^t f(\chi) = {\bf M E}_{\chi} f(\eta(t)), ~ t\ge 0, ~  f\in L^{\infty}(\mathcal{X}), 
\label{semigroup-St}
\end{equation}}
where $\eta(t)$ is defined by \eqref{environment1}. And $S^t$ satisfies the following properties,
\begin{proposition}\label{Operator-St}[\cite{fannjiang1999Markovian}, Prop. 3]
\begin{enumerate}[(P1)]
\item $S^t, t\ge 0$ is a strongly continuous, Markov semigroup of contraction on $L^2(\mathcal{X})$.
\item $S^t, t\ge 0$ is measure-preserving, that is,
\begin{equation}
\int S^t f dP_0 = \int f dP_0,~ t\ge 0,~ f \in L^2(\mathcal{X}).
\end{equation}
\end{enumerate}
\end{proposition}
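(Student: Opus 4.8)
The plan is to recognize $S^t$ as the transition semigroup of the environment process $\eta(t)=\tau_{{\bf X}_t^{\omega}}\omega(t)$ and to verify each asserted property from the Markov structure of $\eta(t)$ together with It\^o calculus on $\mathcal{X}$. The first observation is that $\eta(t)$ is itself a Markov process: its evolution closes because the drift ${\bf b}(t,{\bf X}_t^\omega,\omega)={\bf b}(\eta(t))$ is a function of the current state $\eta(t)$ alone, so the time-shifted environment $\omega(t)$ together with the spatial shift by ${\bf X}_t^\omega$ updates autonomously. Applying It\^o's formula to $f(\eta(t))$ for $f\in C_b^\infty(\mathcal{X})$ and separating the bounded-variation part from the martingale part identifies the generator as $\mathcal{L}f = Lf + {\bf b}\cdot Df + \tfrac{\sigma^2}{2}\sum_{k=1}^d D_k^2 f$, where $L$ generates $P^t$, $Df=(D_1f,\dots,D_df)$, and the stochastic term is $\sigma\sum_k (D_kf)(\eta(s))\,dw_k(s)$. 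This representation is the workhorse for everything below.

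For (P1), the Markov and semigroup (Chapman--Kolmogorov) identity $S^{t+h}=S^t S^h$ is obtained by conditioning the path of $\eta$ on $\ell_{\le t}$, invoking the independence of the environment randomness from the Brownian motion and property (P3) of $P^t$; positivity ($f\ge0\Rightarrow S^tf\ge0$) and $S^t{\bf 1}={\bf 1}$ are immediate from the expectation formula \eqref{semigroup-St}. Granting (P2) for the moment, the $L^2(\mathcal{X})$-contraction is a one-line consequence of Jensen's inequality: since $S^t$ is an averaging (Markov) operator, $(S^tf)^2\le S^t(f^2)$ pointwise, and integrating against $P_0$ and using (P2) gives $\|S^tf\|_{L^2}^2\le\int S^t(f^2)\,dP_0=\int f^2\,dP_0$. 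Strong continuity is first checked on the dense class $C_b(\mathcal{X})$: path continuity of $t\mapsto\eta(t)$, inherited from the a.s. continuity of $\tau_{\bf x}$ and of $\omega(\cdot)$, together with dominated convergence yields $S^tf\to f$ in $L^2(P_0)$ as $t\downarrow0$, and the uniform bound $\|S^t\|\le1$ propagates this to all of $L^2(\mathcal{X})$ by a standard $3\varepsilon$ density argument.

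The heart of the proof is the measure-preserving property (P2), and the plan is to reduce it to the single infinitesimal identity $\int \mathcal{L}g\,dP_0=0$ for $g$ in the core $C_b^\infty(\mathcal{X})$. I would verify this term by term. The time term vanishes because $\int P^tg\,dP_0=\int g\,dP_0$ (property (P2) of $P^t$) gives $\int Lg\,dP_0=0$ on differentiating at $t=0$. The diffusion term vanishes by skew-adjointness of $D_k$ together with $D_k{\bf 1}=0$: $\int D_k^2g\,dP_0=(D_k^2g,{\bf 1})=-(D_kg,D_k{\bf 1})=0$. The drift term is exactly where incompressibility enters: integrating by parts via skew-adjointness, $\sum_k\int b_k\,D_kg\,dP_0=-\sum_k\int (D_kb_k)\,g\,dP_0=-\int(\nabla\cdot{\bf b})\,g\,dP_0=0$ because ${\bf b}$ is divergence-free. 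Combining the three and using that $S^tf$ remains in the core, $\tfrac{d}{dt}\int S^tf\,dP_0=\int\mathcal{L}(S^tf)\,dP_0=0$, so $\int S^tf\,dP_0$ is constant in $t$ and equals its value $\int f\,dP_0$ at $t=0$; the extension from $C_b^\infty$ to $L^2(\mathcal{X})$ then follows by density once the contraction is in hand.

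I expect the main obstacle to be the rigorous justification of the generator computation rather than the algebra: one must show that $C_b^\infty(\mathcal{X})$ is a core, that $S^tf$ stays in (or is well approximated within) this core so the integration-by-parts identity applies along the whole trajectory, and that $\tfrac{d}{dt}$ may be exchanged with $\int\cdot\,dP_0$. These points hinge on the regularity of ${\bf b}$ postulated in Assumption \ref{assump-regularity-b} and on the density of $C_b^\infty(\mathcal{X})$ in $L^p(\mathcal{X})$ recorded earlier. The remaining ingredients, namely the Markov property of $\eta(t)$ and strong continuity, are comparatively routine, relying on path continuity and the independence of the two sources of randomness.
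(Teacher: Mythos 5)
The paper offers no proof of Proposition \ref{Operator-St} at all: it is imported verbatim from \cite{fannjiang1999Markovian} (Prop.~3 there), so there is no in-paper argument to compare against. Judged on its own, your outline is correct and follows the standard ``environment viewed from the particle'' route: identify the generator of $S^t$ as $\mathcal{L}=L+{\bf b}\cdot D+\tfrac{\sigma^2}{2}\sum_k D_k^2$ (consistent with \eqref{generateL}), kill each term of $\int \mathcal{L}g\, dP_0$ using measure-preservation of $P^t$, skew-adjointness of the $D_k$, and $\sum_k D_k b_k=0$, then obtain the contraction from Jensen plus (P2) and extend by density. It is worth noting that where the paper proves the discrete-time counterparts of this proposition (Theorems \ref{measure-preserving} and \ref{thm:exponentialdecay}), it uses a different, kernel-level mechanism: incompressibility enters as volume preservation of the one-step map, $\det({\bf D\Phi}^{\chi}_{\Delta t})=1$, together with the shift-equivariance $p^1_{\tau_{\bf x}\chi}({\bf y},{\bf z})=p^1_{\chi}({\bf x+y},{\bf x+z})$, and the contraction is Cauchy--Schwarz against the transition kernel --- the exact discrete analogue of your Jensen step. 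Your infinitesimal route and that kernel route are two faces of the same structure, and each buys something: the generator identity makes the role of $\nabla\cdot{\bf b}=0$ transparent term by term, while the kernel argument avoids all domain questions and is the one that survives discretization.

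Two points in your sketch are genuinely incomplete, and you correctly flag them as the main obstacles. First, Markovianity of $\eta(t)$ does not follow merely from ``the drift is a function of the current state'': one needs joint Markovianity of the pair $(\omega(t),{\bf X}_t^{\omega})$ together with the spatial homogeneity and equivariance of the dynamics under $\tau_{\bf x}$ (properties T1--T3 and P3) to conclude that the projection $\eta(t)=\tau_{{\bf X}_t^{\omega}}\omega(t)$ is itself Markov. Second, your differentiation step $\tfrac{d}{dt}\int S^t f\, dP_0=\int\mathcal{L}(S^t f)\, dP_0$ needs the core to be invariant under $S^t$; Proposition \ref{St-invariant} guarantees this only for bounded ${\bf b}$, whereas Proposition \ref{Operator-St} is stated in Section \ref{sec:EffectiveDiffusivity} for ${\bf b}$ merely locally Lipschitz with finite second moments, so a truncation/approximation argument (of the kind the paper invokes to extend \eqref{St-decay}) is required to finish. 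These are precisely the points where the substance of the proof in \cite{fannjiang1999Markovian} lies, so your proposal should be read as a correct and well-structured plan rather than a complete proof.
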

Let $D_1 = \mathcal{D}(L)\cap C_b^2(\mathcal{X})$ and $\mathcal{L}$ denote the generator of the semigroup $S^t$, $t\ge0$, i.e., 
\begin{equation}
  \mathcal{L}f = Lf + \frac{\sigma^2}{2} \Delta f + {\bf b}\cdot \nabla f,
  \label{generateL}
\end{equation} 
where $L$ is the generator of the semigroup $P^t$. One can easily verify the following properties.
\begin{proposition} \label{St-invariant}[\cite{fannjiang1999Markovian},Prop 4]
\begin{enumerate}[(P1)]
 \item $D_1$ is dense in $L^2(\mathcal{X})$ and is invariant under the semigroup $P^t$, $t\ge 0$, 
 i.e., $P^t(D_1) \subseteq D_1$ for all $ t\ge0$.
 \item Assume that the random flow ${\bf b}$ is bounded. Then, $D_1$ is invariant under the semigroup $S^t$, $t \ge 0$, i.e., $S^t(D_1) \subseteq D_1$ for all $t\ge 0$.
\end{enumerate}
\end{proposition}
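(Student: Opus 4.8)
The plan is to derive both statements from one structural fact: the semigroups $P^t$ and $S^t$ each commute with the spatial shift group $U^{\bf x}$, a consequence of the joint space--time stationarity of the field ${\bf b}(t,{\bf x},\omega)={\bf b}(\tau_{\bf x}\omega(t))$. First I would record two preliminaries. Both $P^t$ (Markov by (P1)) and $S^t$ (Markov by Proposition~\ref{Operator-St}) are contractions on $L^\infty(\mathcal{X})$ as well as on $L^2(\mathcal{X})$, since $P^t{\bf 1}={\bf 1}$, $S^t{\bf 1}={\bf 1}$ together with positivity give $\|P^tf\|_{L^\infty}\le\|f\|_{L^\infty}$ and $\|S^tf\|_{L^\infty}\le\|f\|_{L^\infty}$. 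Moreover, when ${\bf b}$ is bounded one has $D_1\subseteq\mathcal{D}(\mathcal{L})$: for $f\in\mathcal{D}(L)\cap C_b^2(\mathcal{X})$ each of $Lf$, $\tfrac{\sigma^2}{2}\Delta f=\tfrac{\sigma^2}{2}\sum_kD_k^2f$ and ${\bf b}\cdot\nabla f=\sum_kb_kD_kf$ lies in $L^2(\mathcal{X})$, so $\mathcal{L}f$ is well defined by \eqref{generateL}.

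To obtain the commutations, I would prove $U^{\bf y}P^tf=P^tU^{\bf y}f$ directly from spatial homogeneity of the time-dynamics, and then deduce $U^{\bf y}S^tf=S^tU^{\bf y}f$ by a change of variables in \eqref{omegadiffusion}: starting the environment at $\tau_{\bf y}\chi$ turns the drift into ${\bf b}(\tau_{{\bf X}_t+{\bf y}}\omega(t))$, so that ${\bf Y}_t:={\bf X}_t+{\bf y}$ solves the same SDE from ${\bf y}$ and the shifted environment process satisfies $\tau_{{\bf X}_t}\tau_{\bf y}\omega(t)=\tau_{{\bf Y}_t}\omega(t)$; inserting this into the definition \eqref{semigroup-St} gives the identity pathwise. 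Differentiating $U^{\bf y}P^tf=P^tU^{\bf y}f$ and $U^{\bf y}S^tf=S^tU^{\bf y}f$ in ${\bf y}$ at ${\bf y}={\bf 0}$, which is legitimate for $f\in C_b^2(\mathcal{X})$, then yields $D^\alpha P^tf=P^tD^\alpha f$ and $D^\alpha S^tf=S^tD^\alpha f$ for all $|\alpha|\le2$.

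Part~1 follows quickly. Invariance of $\mathcal{D}(L)$ under $P^t$ is the standard fact that a strongly continuous semigroup preserves its generator's domain; for the $C_b^2$ part, the commutation and the $L^\infty$-contraction give $\|D^\alpha P^tf\|_{L^\infty}=\|P^tD^\alpha f\|_{L^\infty}\le\|D^\alpha f\|_{L^\infty}$ for $|\alpha|\le2$, so $P^tf\in C_b^2(\mathcal{X})$ and hence $P^t(D_1)\subseteq D_1$. For density, since $C_b^\infty(\mathcal{X})$ is dense in $L^2(\mathcal{X})$ it suffices to approximate each $f\in C_b^\infty(\mathcal{X})$ by elements of $D_1$; I would use the time averages $f_\epsilon=\tfrac1\epsilon\int_0^\epsilon P^tf\,dt$, which lie in $\mathcal{D}(L)$ with $Lf_\epsilon=\tfrac1\epsilon(P^\epsilon f-f)$, lie in $C_b^2(\mathcal{X})$ by the same commutation/contraction estimate, and converge to $f$ in $L^2(\mathcal{X})$ by strong continuity.

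For Part~2 the inclusion $S^tf\in C_b^2(\mathcal{X})$ is immediate from $D^\alpha S^tf=S^tD^\alpha f$ and the $L^\infty$-contraction of $S^t$, exactly as above; it remains to show $S^tf\in\mathcal{D}(L)$. Since $f\in D_1\subseteq\mathcal{D}(\mathcal{L})$ we have $g:=S^tf\in\mathcal{D}(\mathcal{L})$, and the idea is to peel the spatial part off $\mathcal{L}$ through the short-time expansion
\[
S^sg=P^sg+s\Big(\tfrac{\sigma^2}{2}\Delta g+{\bf b}\cdot\nabla g\Big)+o(s)\quad\text{in }L^2(\mathcal{X}),\ s\to0,
\]
obtained by applying Dynkin's formula to $g(\eta(s))=U^{{\bf X}_s}g(\omega(s))$ in \eqref{semigroup-St}, expanding $U^{{\bf X}_s}$ to second order (allowed since $g\in C_b^2(\mathcal{X})$) and using ${\bf M}[{\bf X}_s]={\bf b}\,s+o(s)$, ${\bf M}[X_s^jX_s^k]=\sigma^2\delta_{jk}s+o(s)$, while the time-evolution of $\omega(s)$ contributes the factor $P^s$. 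Because $\tfrac{\sigma^2}{2}\Delta g+{\bf b}\cdot\nabla g\in L^2(\mathcal{X})$ (here the boundedness of ${\bf b}$ and $g\in C_b^2(\mathcal{X})$ enter), dividing by $s$ shows that $\lim_{s\to0}(P^sg-g)/s=\mathcal{L}g-\tfrac{\sigma^2}{2}\Delta g-{\bf b}\cdot\nabla g$ exists in $L^2(\mathcal{X})$, so $g\in\mathcal{D}(L)$ and $S^tf\in D_1$. I expect the main obstacle to be making this last expansion rigorous in $L^2(\mathcal{X})$: one must control the second-order Taylor remainder of $U^{{\bf X}_s}g$ uniformly using only $g\in C_b^2(\mathcal{X})$, bound the martingale and drift contributions of ${\bf X}_s$ via the boundedness of ${\bf b}$, and verify that the mixed temporal--spatial terms are genuinely $o(s)$ so that $P^s$ factors out cleanly. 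This Dynkin-type estimate, rather than the commutation algebra, is where the analytic effort concentrates, and the boundedness hypothesis on ${\bf b}$ is used precisely to keep the spatial corrector and the remainder in $L^2(\mathcal{X})$.
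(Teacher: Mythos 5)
The paper itself offers no proof of this proposition: it is quoted verbatim from \cite{fannjiang1999Markovian} (Prop.~4), so your argument has to stand on its own. Part~1 of your proposal is essentially sound. The commutation $U^{\bf y}P^t=P^tU^{\bf y}$ is not listed among (T1)--(T5), (P1)--(P3), but it is the spatial-homogeneity hypothesis of the cited framework (it is also what makes the $\tau_{\bf x}$-invariance in Proposition \ref{P-stationary} work, via $P_{\tau_{\bf y}\chi}=P_\chi\circ\tau_{\bf y}^{-1}$), so invoking it is legitimate provided you state it explicitly; given it, the $L^\infty$-contraction, the preservation of $C_b^2(\mathcal{X})$, the standard invariance of $\mathcal{D}(L)$ under its own semigroup, and the time-averaging density argument all go through.

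Part~2, however, has a genuine gap: the claimed commutation $U^{\bf y}S^tf=S^tU^{\bf y}f$ is false, and with it the ``immediate'' conclusion $S^tf\in C_b^2(\mathcal{X})$. Your own change of variables shows why. It gives $U^{\bf y}S^tf(\chi)={\bf M}{\bf E}_\chi f(\tau_{{\bf Y}_t}\omega(t))$ with ${\bf Y}_t$ the solution of \eqref{omegadiffusion} started at ${\bf y}$, whereas $S^tU^{\bf y}f(\chi)={\bf M}{\bf E}_\chi f(\tau_{{\bf X}_t+{\bf y}}\omega(t))$ with ${\bf X}_t$ started at ${\bf 0}$; these agree only if ${\bf Y}_t={\bf X}_t+{\bf y}$ pathwise, i.e., only if the flow of ${\bf b}$ were a rigid translation, which fails for any nonconstant drift. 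At the generator level the obstruction is the variable-coefficient transport term in \eqref{generateL}: $[\mathcal{L},D_k]=-(D_k{\bf b})\cdot\nabla\neq0$, while $[L,D_k]=0$; so $S^t$ cannot commute with the shift group even though $P^t$ does. Since your Dynkin-type expansion presupposes $g=S^tf\in C_b^2(\mathcal{X})$, all of Part~2 collapses with this step. The repair is the stochastic-flow argument: using spatial homogeneity of $P_\chi$ alone, write $S^tf(\tau_{\bf x}\chi)={\bf M}{\bf E}_\chi f(\tau_{{\bf Z}^{\bf x}_t}\omega(t))$, where ${\bf Z}^{\bf x}_t$ is the flow of \eqref{omegadiffusion} started at ${\bf x}$, and differentiate in ${\bf x}$ through the variational (Jacobian) equation $d\,{\bf D}_{\bf x}{\bf Z}^{\bf x}_t=\nabla{\bf b}\,{\bf D}_{\bf x}{\bf Z}^{\bf x}_t\,dt$, so that $\|{\bf D}(S^tf)\|_{L^\infty}\le\|\nabla f\|_{L^\infty}e^{t\|\nabla{\bf b}\|_{L^\infty}}$. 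Note this requires bounds on derivatives of ${\bf b}$, not merely boundedness of ${\bf b}$, which is precisely where the regularity hypotheses of the cited result enter; your short-time Dynkin expansion could then be carried out on top of this corrected first step to get $S^tf\in\mathcal{D}(L)$.
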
 
\begin{lemma}	 
From the spectral gap condition \eqref{TimeRelaxation}, we obtain that for any $f \in L_0^2 (\mathcal{X})$

\begin{align}
||S^tf||_{L^2(\mathcal{X})} \le \exp(-2c_1 t) ||f||_{L^2(\mathcal{X})},~\text{where}~ c_1>0. 
\label{St-decay}
\end{align}
\end{lemma}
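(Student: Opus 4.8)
The plan is to transfer the dissipativity of $L$ encoded in the spectral gap condition \eqref{TimeRelaxation} to the full generator $\mathcal{L} = L + \frac{\sigma^2}{2}\Delta + {\bf b}\cdot\nabla$ of \eqref{generateL}, and then run the same energy argument that turns \eqref{TimeRelaxation} into the exponential decay \eqref{exponentialdecayproperty} for $P^t$. Concretely, I would first establish the key dissipativity estimate
$$-(\mathcal{L}f, f)_{L^2(\mathcal{X})} \ge c_1\, \|f\|_{L^2(\mathcal{X})}^2, \qquad f \in D_1 \cap L^2_0(\mathcal{X}),$$
and then upgrade it to the stated semigroup bound by a Gronwall-type argument and density.

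First I would check that $S^t$ preserves mean-zero functions: by the measure-preserving property (P2) of Proposition \ref{Operator-St}, ${\bf E}_0(S^t f) = \int S^t f\, dP_0 = \int f\, dP_0 = {\bf E}_0 f$, so $S^t$ maps $L^2_0(\mathcal{X})$ into itself. Working on the dense, $S^t$-invariant core $D_1 = \mathcal{D}(L)\cap C_b^2(\mathcal{X})$ supplied by Proposition \ref{St-invariant}, I would decompose
$$(\mathcal{L}f, f) = (Lf, f) + \frac{\sigma^2}{2}(\Delta f, f) + ({\bf b}\cdot\nabla f, f)$$
and treat the three terms separately. The first is controlled directly by \eqref{TimeRelaxation}, giving $(Lf, f) \le -c_1\|f\|^2$. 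For the second, since the generators $D_k$ are skew-adjoint and $\Delta = \sum_k D_k^2$, integration by parts yields $(\Delta f, f) = \sum_k (D_k^2 f, f) = -\sum_k \|D_k f\|^2 \le 0$, so the molecular-diffusion term only helps. For the third, the crucial point is that ${\bf b}$ is divergence-free: writing $({\bf b}\cdot\nabla f, f) = \sum_k \int b_k (D_k f)\, f\, dP_0$ and integrating by parts once in each $D_k$ (again using skew-adjointness), one obtains $2({\bf b}\cdot\nabla f, f) = -\int f^2 \sum_k (D_k b_k)\, dP_0 = -\int f^2 (\nabla\cdot{\bf b})\, dP_0 = 0$. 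Combining the three contributions gives the dissipativity estimate above, with the same constant $c_1$ as in \eqref{TimeRelaxation}.

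Finally, for $f \in L^2_0(\mathcal{X})$ I would set $u(t) = S^t f$, which remains in $L^2_0(\mathcal{X})$ by the first step. Differentiating the energy and applying the dissipativity estimate gives $\frac{d}{dt}\|u(t)\|^2 = 2(\mathcal{L}u, u) \le -2c_1\|u(t)\|^2$, and Gronwall's inequality then produces exponential decay of $\|S^t f\|_{L^2(\mathcal{X})}$ at the same rate and in the same form as the decay \eqref{exponentialdecayproperty} for $P^t$; a standard density argument extends the bound from the core $D_1$ to all of $L^2_0(\mathcal{X})$, which is exactly \eqref{St-decay}.

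The main obstacle is the divergence-free cancellation $({\bf b}\cdot\nabla f, f) = 0$: one must carefully justify the integration by parts for each skew-adjoint $D_k$ in the abstract stationary setting, use the boundedness of ${\bf b}$ assumed in Proposition \ref{St-invariant} to keep $D_1$ invariant, and confirm that the formal manipulations are legitimate on the chosen core before passing to the limit by density. The remaining ingredients, namely the sign of the Laplacian term and the Gronwall step, are routine once these domain and integrability issues are settled.
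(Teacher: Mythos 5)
Your proposal follows essentially the same route as the paper's proof: the dissipativity estimate $(-\mathcal{L}f,f)_{L^2(\mathcal{X})} \ge (-Lf,f)_{L^2(\mathcal{X})} \ge c_1\|f\|_{L^2(\mathcal{X})}^2$ on the core $D_1\cap L_0^2(\mathcal{X})$ (your explicit three-term decomposition, with the Laplacian term non-positive by skew-adjointness of the $D_k$ and the convection term vanishing by the divergence-free condition, is exactly what the paper's one-line claim encodes), followed by the same energy/Gronwall argument using the $S^t$-invariance of $D_1$ from Proposition \ref{St-invariant} and a density extension. The only step you leave implicit that the paper states separately is the final approximation argument removing the boundedness assumption on ${\bf b}$ — your argument, which invokes Proposition \ref{St-invariant}, is carried out for bounded ${\bf b}$ only — but this matches the paper's own level of detail, which also defers that step to an unspecified approximation.
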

\begin{proof}
We first assume $\bf b$ is bounded and $f \in D_1\subseteq\mathcal{D}(\mathcal{L})$. Using the spectral gap condition and $\bf b$ is divergence-free, we have 
\begin{equation}
	(-\mathcal{L} f, f)_{L^2_0(\mathcal{X})} \ge (-Lf, f)_{L^2_0(\mathcal{X})} \ge c_1||f||^2_{L^2_0(\mathcal{X})} 
\end{equation}
for all $f\in D_1\cap L_0^2(\mathcal{X})$. By Proposition \ref{St-invariant}, $S^t f \in D_1, t\ge 0$ for any $f \in D_1$. Consequently,
\begin{equation}
	\frac{d}{dt} ||S^t f||_{L^2(\mathcal{X})}^2 = 2(\mathcal{L}S^t f, S^t f)_{L^2(\mathcal{X})} \le -2c_1 ||S^t f||_{L^2(\mathcal{X})}^2,
\end{equation}
thus
\ {
\begin{equation}
 ||S^t f||_{L^2(\mathcal{X})}^2\le \exp(-2c_1 t) ||f||_{L^2(\mathcal{X})}^2,\quad  \forall t \ge 0,
\end{equation}}
for all $f \in D_1\cap L_0^2(\mathcal{X})$. Then, the statement in \eqref{St-decay} is extended to $L_0^2(\mathcal{X})$ by using an approximation argument. Finally, the boundedness of the random flow ${\bf b}$ is removed by using another approximation argument.
\end{proof} 
Given the semigroup of transformation $S^t$ in \eqref{semigroup-St} and its associated properties (see Proposition \ref{Operator-St}), we can define
\begin{equation}
{\bm \psi} = \int_0^{\infty} S^t   {\bf b}dt
\label{solution-continue-corrector}
\end{equation}
which satisfies the following continuous-type corrector problem

\begin{align}\label{cell-problem-c}
\mathcal{L} {\bm \psi} = -{\bf b}
\end{align}
where $\mathcal{L}$ is the generator of $S^t$ defined in \eqref{generateL}. By solving the corrector problem
\eqref{cell-problem-c}, we are able to define the effective diffusivity. This can be summarized into the following result.  
\begin{proposition}\label{continus-cellproblem} 
Let ${\bf X}(t)$ be the solution to \eqref{eqn:generalSDEDefD} and ${\bf X}_{\epsilon}(t)\equiv\epsilon {\bf X}(t/\epsilon^2)$. 
For any unit vector ${\bf v} \in R^d$, let $\psi_{\bf v} = \bf \bm\psi\cdot v$ denote 
the projection of the vector solution ${\bm \psi} $ along the direction ${\bf v}$, where ${\bm \psi} $ 
is the solution to corrector problem \eqref{cell-problem-c}. Then, the law of the process ${\bf X}_{\epsilon}(t)\cdot {\bf v}$ converges weakly in $C[0,+\infty)$ to a Brownian motion with diffusion coefficient  given by

\begin{align}
{\bf v}^{T}D^{E}{\bf v} = \frac{\sigma^2}{2} + (-\mathcal{L} \psi_{\bf v}, \psi_{\bf v})_{L^2(\mathcal{X})},
\label{continus-effective-diffusivity}
\end{align}  
where $D^{E}$ is the effective diffusivity associated with the passive tracer model \eqref{eqn:generalSDEDefD}.    
\end{proposition}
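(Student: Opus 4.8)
The plan is to follow the classical ``environment seen from the particle'' martingale approach, reducing the weak convergence to a functional central limit theorem (CLT) for a stationary ergodic martingale and then identifying its variance with the right-hand side of \eqref{continus-effective-diffusivity}. The first step is to observe that, by \eqref{environment1}, the drift in \eqref{omegadiffusion} equals ${\bf b}(\eta(t))$, so projecting onto ${\bf v}$ gives
\[
{\bf v}\cdot{\bf X}(t)=\int_0^t b_{\bf v}(\eta(s))\,ds+\sigma\,{\bf v}\cdot{\bf w}(t),\qquad b_{\bf v}:={\bf b}\cdot{\bf v}.
\]
Since $\eta(t)$ is a stationary, ergodic Markov process on $(\mathcal{X},P_0)$ with generator $\mathcal{L}$ (Propositions \ref{Operator-St}--\ref{St-invariant}), the additive functional $\int_0^t b_{\bf v}(\eta(s))\,ds$ is precisely the object controlled by the corrector.

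Second, I would invoke the corrector $\psi_{\bf v}={\bm \psi}\cdot{\bf v}$ from \eqref{solution-continue-corrector}: the exponential decay \eqref{St-decay} guarantees that ${\bm \psi}=\int_0^\infty S^t{\bf b}\,dt$ converges in $L^2(\mathcal{X})$ and solves $\mathcal{L}\psi_{\bf v}=-b_{\bf v}$ by \eqref{cell-problem-c}. Applying the Dynkin/It\^o formula to $\psi_{\bf v}(\eta(t))$ and using $\mathcal{L}\psi_{\bf v}=-b_{\bf v}$ yields
\[
\int_0^t b_{\bf v}(\eta(s))\,ds=\psi_{\bf v}(\eta(0))-\psi_{\bf v}(\eta(t))+M_t,
\]
where $M_t$ is a square-integrable martingale with stationary increments. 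Hence ${\bf v}\cdot{\bf X}(t)=N_t+\big(\psi_{\bf v}(\eta(0))-\psi_{\bf v}(\eta(t))\big)$ with $N_t:=M_t+\sigma\,{\bf v}\cdot{\bf w}(t)$. Rescaling gives $\epsilon\,{\bf v}\cdot{\bf X}(t/\epsilon^2)=\epsilon N_{t/\epsilon^2}+\epsilon\big(\psi_{\bf v}(\eta(0))-\psi_{\bf v}(\eta(t/\epsilon^2))\big)$; since $\psi_{\bf v}\in L^2(\mathcal{X})$ and $\eta$ is stationary, the boundary term is $O(\epsilon)$ in $L^2$ and vanishes in probability, uniformly on compact time intervals, while $\epsilon N_{t/\epsilon^2}$ converges in law to a Brownian motion by the functional CLT for stationary ergodic martingales.

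Third, I would identify the limiting variance. By the ergodic theorem applied to the quadratic variation, $\tfrac{1}{t}\langle N\rangle_t$ converges to ${\bf E}_0$ of the instantaneous rate. Splitting $N_t$ into its Brownian contribution (through the $\sigma\,d{\bf w}$ part of ${\bf X}$, producing the spatial generators $D_k$) and the temporal contribution of $L$, the rate expands into $\sigma^2$, a cross term $2\sigma^2\,{\bf E}_0[{\bf v}\cdot D\psi_{\bf v}]$, the spatial term $\sigma^2\|D\psi_{\bf v}\|_{L^2(\mathcal{X})}^2$, and a temporal carr\'{e}-du-champ term ${\bf E}_0\big[L(\psi_{\bf v}^2)-2\psi_{\bf v}L\psi_{\bf v}\big]$. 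The cross term vanishes because ${\bf E}_0[D_k\psi_{\bf v}]=0$ (skew-adjointness of $D_k$); and using ${\bf E}_0[L(\cdot)]=0$, the identity $-\tfrac{\sigma^2}{2}(\Delta\psi_{\bf v},\psi_{\bf v})=\tfrac{\sigma^2}{2}\|D\psi_{\bf v}\|^2$, and the skew-symmetry $({\bf b}\cdot\nabla\psi_{\bf v},\psi_{\bf v})=0$ coming from $\nabla\cdot{\bf b}=0$, everything collapses to $2\big[\tfrac{\sigma^2}{2}+(-\mathcal{L}\psi_{\bf v},\psi_{\bf v})\big]$. Dividing by $2t$ yields \eqref{continus-effective-diffusivity}.

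Finally, a word on the main obstacle. The delicate point is the rigorous justification of the martingale decomposition and the CLT when the corrector is merely an $L^2$ object: $\psi_{\bf v}$ need not be smooth enough for a direct It\^o expansion, so one must first establish the result for bounded, mollified flows ${\bf b}$ (where the $D_1$-regularity of $\psi_{\bf v}$ holds, exactly as in the lemma preceding \eqref{St-decay}) and then pass to the limit using the $L^2$-approximation arguments already employed there. Verifying the hypotheses of the martingale CLT --- stationarity and ergodicity of the increments together with the Lindeberg/quadratic-variation convergence --- and controlling the approximation error uniformly in $\epsilon$ is where the real work lies; the variance computation is then a routine consequence of the skew-adjointness of the $D_k$ and the divergence-free structure.
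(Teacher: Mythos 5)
Your proposal is correct and follows essentially the same route the paper takes: the paper's proof (delegated to Lemma 1 of \cite{fannjiang1999Markovian} and the Theorem of \cite{Carmona1997homogenization}) is precisely the martingale approximation of the additive functional $\int_0^t {\bf b}(\eta(s))\,ds$ via the corrector, followed by the central limit theorem for continuous-time ergodic Markov processes, which is what you reconstruct. Your variance identification via the carr\'e-du-champ, the skew-adjointness of the $D_k$, and the divergence-free cancellation, as well as the mollification/approximation step to handle the mere $L^2$-regularity of $\psi_{\bf v}$, are consistent with the ingredients the paper relies on (Proposition \ref{Operator-St}, the decay estimate \eqref{St-decay}, and the cited references).
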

The proof of Prop. \ref{continus-cellproblem} relies on an approximation of the additive functional of an ergodic Markov process by a martingale and applying the central limit theorem to continuous-time Markov process, which is very useful in studying the long-time behavior of random dynamics; see  Lemma 1 of \cite{fannjiang1999Markovian} or Theorem of \cite{Carmona1997homogenization}. We shall prove in Theorem \ref{thm:convergence} that the numerical solutions obtained by our Lagrangian numerical scheme recover the definition of the effective diffusivity in \eqref{continus-effective-diffusivity}.

\section{Stochastic structure-preserving schemes and related properties}\label{sec:StructPreserv-schemes}
\subsection{Derivation of numerical schemes}\label{sec:Derivation}
\noindent 
In this section, we construct numerical schemes for the passive tracer model \eqref{omegadiffusion}, 
which is based on an operator splitting method \cite{strang:68}. For each fixed realization $\omega$ of 
the environment, we first split the original problem \eqref{omegadiffusion} into two sub-problems.
\begin{align}
d{\bf X}_t^{\omega} &=  {\bf b}(t,{\bf X}^{\omega}_t,\omega)dt, \label{sub-problem-det}\\
d{\bf X}_t^{\omega} &=   \sigma d{\bf w}_t, \label{sub-problem-rad} 
\end{align}
\ {where we assume ${\bf w}_t$ in \eqref{sub-problem-rad} is the same process as in \eqref{omegadiffusion}.}  Let ${\bf X}^\omega_{n }$ denote the numerical solution of ${\bf X}_t^{\omega}$ at time $t=t_{n}$, $n=0,1,2,...$. From time $t=t_n$ to time $t=t_{n+1}$, where $t_{n+1}=t_{n}+\Delta t$, $t_0=0$, assuming the solution  
${\bf X}^\omega_{n }$ is given, we now discuss how to discretize the above two sub-problems \eqref{sub-problem-det}-\eqref{sub-problem-rad}, separately.

In the sub-problem \eqref{sub-problem-det}, the velocity ${\bf b}(t,{\bf x},\omega)$
is almost surely divergence-free and has certain regularity in the physical space. Thus, we apply a volume-preserving scheme to discretize \eqref{sub-problem-det}. 
Let ${\bf \Phi}_{\Delta t}$ denote the numerical integrator associated with the volume-preserving scheme during $\Delta t$ time and let ${\bf D_{{\bf x}}\Phi}_{\Delta t}$ denote the corresponding Jacobian matrix. The volume-preserving property requests $\det ({\bf D_{{\bf x}}\Phi}_{\Delta t}) = 1$. We obtain
the numerical integrator for the sub-problem \eqref{sub-problem-det} as follows,  

\begin{align}
{\bf X}^\omega_{n+1}={\bf \Phi}^{\omega(n\Delta t)}_{\Delta t}\big({\bf X}^\omega_{n}\big), 
\label{numerical-integrator-sub1}
\end{align} 
where the superscript in ${\bf \Phi}^{\omega(n\Delta t)}_{\Delta t}$ means that the numerical integrator 
implicitly depends on the realization of ${\bf b}$ at different computational times. Suppose $\bf b$ has bounded first derivatives with respect to $\bf x$ for almost all $\omega$, it is easy to verify that the volume-preserving integrator ${\bf\Phi}^{\omega(n\Delta t)}_{\Delta t}$ also has bounded first derivatives for $\Delta t$ small enough. Thus, ${\bf\Phi}^{\omega(n\Delta t)}_{\Delta t}$ is well defined.

In addition, we assume that the numerical scheme only relies on the information of ${\bf X}$ and ${\bf b}$ at the beginning of each computational time,  in order to make sure the solution process generated by our method is a Markov process. For instance, to compute ${\bf X}^\omega_{n+1}$ the numerical scheme only relies on the information of ${\bf X}$ and ${\bf b}$ at $t=t_n$. 

We illustrate this idea by constructing a volume-preserving scheme for a two-dimensional problem. 
Let ${\bf X}^\omega_{n} = (X^\omega_{n,1},X^\omega_{n,2})^{T}$ denote the numerical solution at time $t=t_n$
and the velocity ${\bf b}(t,{\bf x}, \omega) = (b_1(t,{\bf x}, \omega),b_2(t,{\bf x}, \omega))^{T}$.
Then, we use the following numerical scheme to discretize \eqref{sub-problem-det}  
\begin{equation}
 (X^\omega_{{n+1},1},X^\omega_{{n+1},2})^{T} = (X^\omega_{n,1},X^\omega_{n,2})^{T} + \Delta t {\bf b}\big(t_n, (\frac{X^\omega_{n,1}+X^\omega_{{n+1},1}}{2},\frac{X^\omega_{n,2}+X^\omega_{{n+1},2}}{2})^{T},\omega\big),
 \label{twoD-illustation}
\end{equation}
where we evaluate the velocity ${\bf b}(t,{\bf x}, \omega)$ at $t=t_n$ to ensure the Markov property.  By solving Eq.\eqref{twoD-illustation} to get $(X^\omega_{{n+1},1},X^\omega_{{n+1},2})^{T}$, we implicitly define a numerical integrator ${\bf \Phi}^{\omega(n\Delta t)}_{\Delta t}$; see Eq.\eqref{numerical-integrator-sub1}. Since ${\bf b}(t,{\bf x}, \omega)$ is almost surely divergence-free, we can easily verify that the scheme \eqref{twoD-illustation} is volume-preserving, i.e, $\det ({\bf D_{{\bf x}}\Phi}^{\omega(n\Delta t)}_{\Delta t}) = 1$. \ {As we will demonstrate in the proof of Theorem \ref{measure-preserving} that using a volume-preserving numerical scheme  to discretize \eqref{sub-problem-det}  is essential. }


For a $d$-dimensional sub-problem \eqref{sub-problem-det}, we split the velocity field ${\bf b}(t,{\bf x},\omega)$ into a summation of $d-1$ velocity fields, where each of them will generate a two-dimensional problem and thus we can design the volume-preserving scheme accordingly. By applying a splitting method \cite{Quispel:02}, we can construct volume-preserving schemes for the original $d$-dimensional sub-problem \eqref{sub-problem-det}. More details can be found in \cite{KangShang1995volume,ErnstLubich:06}.         

Given the numerical integrator $\bm\Phi^{\omega(n\Delta t)}_{\Delta t}$, we define the mapping

\begin{align}
{\bf B}_{\Delta t}^{\omega(n\Delta t)}({\bf x}) = {\bf \Phi}^{\omega(n\Delta t)}_{\Delta t}({\bf x}) - {\bf x}.\label{Bdefinition}
\end{align} 
One can easily verity that 
${\bf B}_{\Delta t}^{\omega(n\Delta t)}({\bf X}^\omega_{n})$
 is an approximation of the increment for the exact solution of the sub-problem \eqref{sub-problem-det} as follows,

\begin{align}
{\bf X}^\omega_{(n+1)\Delta t}-{\bf X}^\omega_{n\Delta t} = \int_{n\Delta t}^{(n+1)\Delta t} {\bf b}(t,{\bf X}_t^\omega,\omega)dt.
\label{OneStepExactSolution}
\end{align} 

\ {The sub-problem \eqref{sub-problem-rad} can be exactly solved by many numerical schemes for SDEs, including the Euler-Maruyama scheme \cite{Platen:1992}}. 

Finally, we apply the Lie-Trotter splitting method and get the stochastic structure-preserving scheme as follows, 

\begin{align}
{\bf X}^{\omega}_{n+1} = {\bf X}^{\omega}_{n} + {\bf B}_{\Delta t}^{\omega(n\Delta t)}({\bf X}^{\omega}_{n}) + \sigma\bm{\xi}_{n},
\label{one-step}
\end{align}
where $\bm{\xi}_n=(\xi_1,...,\xi_d)^{T}$ is a $d$-dimensional i.i.d. mean-free Gaussian random vector with ${\bf E}{\bm \xi}_n\otimes{\bm \xi}_n  = \Delta t {\bf I}_d$. Here ${\bf I}_d$ is an identity matrix. 
   
The volume-preserving schemes for the sub-problem \eqref{sub-problem-det} are implicit in general.
Compared with explicit schemes, however, they allow us to choose a relatively larger time step to compute. In practice, we find that a few steps of Newton iterations are good enough to maintain accurate results. Therefore, the computational cost is controllable. To design adaptive time-stepping method for the passive tracer model \eqref{omegadiffusion} is an interesting issue, which will be studied in our future work.

In general, the second-order Strang splitting \cite{strang:68} is more frequently used in developing numerical 
schemes. In fact, the only difference between the Strang splitting method and the Lie-Trotter splitting method is that the first and last steps are half of the time step $\Delta t$. For the SDEs, however, the dominant source of error comes from the random subproblem \eqref{sub-problem-rad}. Thus, it is not necessary to implement the Strang splitting scheme here.



\subsection{Some properties of the numerical schemes}\label{sec:propertyNumSche}
\noindent
In this subsection, we shall prove some properties of the proposed stochastic structure-preserving scheme. 
Especially, we shall show that some important properties of the random flows are maintained after numerical discretization. Before proceeding to the analysis, we first introduce some notations and assumptions. 
To emphasize the properties in spatial-domain, for any $f \in L^1(\mathcal{X})$, we use $f^\chi({\bf x})$ to represent $f(\tau_{\bf x} \chi)$. Moreover, we denote ${\bf b}(t,{\bf x},\omega) = {\bf b}(\tau_{\bf x} \omega(t))$, where $\tau_{\bf x} \omega(t) \in \mathcal{X}$. 

\begin{assumption}\label{assump-regularity-b}
Suppose the velocity field has certain regularity with respect to spatial variables, i.e., ${\bf b} \in (C_b^m(\mathcal{X}))^d$ for some $m \ge 1$, and has first-order partial derivative bounded with respect to  temporal variable, i.e., $||{\bf D}_{t}{\bf b}||_{L^\infty(\mathcal{X})}\leq c<\infty$.
\end{assumption}

\begin{assumption}\label{assump-stationary-B}
${\bf B}_{\Delta t}^{\chi}({\bf x}) $ defined in \eqref{Bdefinition} is a stationary process with respect to ${\bf x}$, i.e., we can write ${\bf B}_{\Delta t}^\chi({\bf x}) = {\bf B}_{\Delta t}(\tau_{\bf x} \chi)$.
\end{assumption}

\begin{assumption}\label{assump-regularity-B}
If 	$\Delta t$ is small enough, we have ${\bf B}_{\Delta t} \in (C_b^m(\mathcal{X}))^d$ provided that ${\bf b} \in (C_b^m(\mathcal{X}))^d$. In addition, $||{\bf B}_{\Delta t}||_{C_b^m(\mathcal{X})} = K ||{\bf b}||_{C_b^m(\mathcal{X})} \Delta t$, where $K$ is a constant that does not depend on $\Delta t$.
\end{assumption}
\ {
Under Assumption \ref{assump-regularity-b}, we compute the local truncation error of the numerical scheme \eqref{twoD-illustation}. Recall that the numerical solution ${\bf X}^\omega_{n} = (X^\omega_{n,1},X^\omega_{n,2})^{T}$ at time $t=n\Delta t$. We rewrite \eqref{twoD-illustation} into a compact form as follows,}

\ {
\begin{equation}
{\bf X}^\omega_{n+1}={\bf X}^\omega_{n} + \Delta t{\bf b}\big(t_n,\frac{{\bf X}^\omega_{n}+{\bf X}^\omega_{n+1}}{2},\omega\big).\nonumber
\end{equation}}
\ {
Assume ${\bf X}^\omega_{n}$ is equal to the exact solution ${\bf X}^{\omega}_t$ at time $t=n\Delta t$. Then, we can obtain the exact solution of the sub-problem \eqref{sub-problem-det} at time $t=(n+1)\Delta t$ as
\begin{equation}
{\bf X}^\omega_{(n+1)\Delta t} = {\bf X}^\omega_{n} + \int_{n\Delta t}^{(n+1)\Delta t}{\bf b}(t,{\bf X}_t^\omega,\omega)dt.\nonumber
\end{equation}
Let ${\bf T}^\omega_{n+1}$ denote the local truncation error at time $t=(n+1)\Delta t$. We have}

\ {
\begin{align}
{\bf T}^\omega_{n+1}={\bf X}^\omega_{(n+1)\Delta t}-{\bf X}^\omega_{n+1}=\int_{n\Delta t}^{(n+1)\Delta t}\big( {\bf b}(t,{\bf X}_t^\omega,\omega)-{\bf b}(t_n,\frac{{\bf X}^\omega_{n}+{\bf X}^\omega_{n+1}}{2},\omega)\big)dt. \label{LocalTruncationError} 
\end{align}
We know that ${\bf b}$ satisfies an inequality of the following form }

\ {
\begin{align}
&\big|\big|{\bf b}(t,{\bf X}_t^\omega,\omega)-{\bf b}(t_n,\frac{{\bf X}^\omega_{n}+{\bf X}^\omega_{n+1}}{2},\omega)\big|\big| \nonumber\\
\leq & \big|\big|{\bf b}(t,{\bf X}_t^\omega,\omega)-{\bf b}(t,\frac{{\bf X}^\omega_{n}+{\bf X}^\omega_{n+1}}{2},\omega)\big|\big|+
\big|\big|{\bf b}(t,\frac{{\bf X}^\omega_{n}+{\bf X}^\omega_{n+1}}{2},\omega)-{\bf b}(t_n,\frac{{\bf X}^\omega_{n}+{\bf X}^\omega_{n+1}}{2},\omega)\big|\big|, \nonumber \\
\leq & ||{\bf D_{{\bf x}}}{\bf b}||_{L^\infty(\mathcal{X})}
 \big|\big|{\bf X}_t^\omega-\frac{{\bf X}^\omega_{n}+{\bf X}^\omega_{n+1}}{2}\big|\big|+
||{\bf D}_{t}{\bf b}||_{L^\infty(\mathcal{X})}\big|t-t_n\big|, 
\label{LocalTruncationError2} 
\end{align}
where $t_n\leq t \leq t_{n+1}$ and $||\cdot||$ denotes the Euclidean norm of a vector. If Assumption \ref{assump-regularity-b} holds true,  we can easily obtain that the local truncation error ${\bf T}^\omega_{n+1}=O(\Delta t)^2$, where the constant in $O(\Delta t)^2$ depends on $||{\bf D_{{\bf x}}}{\bf b}||_{L^\infty(\mathcal{X})}$ and $||{\bf D}_{t}{\bf b}||_{L^\infty(\mathcal{X})}$.}

\ {
We restrict ourselves to the convergence analysis based on Assumption \ref{assump-regularity-b} in this paper. In fact, when ${\bf b}$ satisfies a H\"{o}lder-$\gamma$ continuous assumption in time domain with $0<\gamma<1$, the local truncation error of \eqref{twoD-illustation} becomes $O(\Delta t)^{1+\gamma}$.  We can still prove
the convergence analysis of our method for computing effective diffusivity in such kind of flows; see Remark \ref{ConvergenceOrderDependonLemma41}.}

 

As an analogy to the continuous-time case \eqref{environment1}, we define the environment process as viewed from the numerical solution ${\bf X}_n^{\omega}$ at different time steps

\begin{equation}\label{environment2}
\left \{
\begin{aligned}
&\eta_n = \tau_{{\bf X}_n^{\omega}} \omega(n\Delta t),\\
&\eta_0 = \omega(0).
\end{aligned}
\right. \end{equation}
The above environment process is defined on the space of trajectories $(\tilde\Omega, \ell)$, where $\tilde \Omega = C([0,\infty)\cap \Delta t\mathbb{Z};\mathcal{X})$  is a subspace of $\Omega$ with time parameter lies only on $\Delta t \mathbb{Z}$. \ {The corresponding expectation operator is still denoted by ${\bf E}_\chi$, which is the same as the one defined in property P3 in Section \ref{sec:formulations}.}  Under this process, we can write ${\bf B}_{\Delta t}(\eta_n) =  {\bf B}_{\Delta t}^{\omega(n\Delta t)}(X^{\omega}_{n})$. In addition, we define 

\begin{align}
\ {S_nf(\chi) = {\bf M E}_{\chi}f(\eta_n)},
\label{discrete-Sn}
\end{align}
\ {where ${\bf M  }$ denotes the expectation respect to ${\bf w}_t$.} We shall prove that $S_n$ is a discrete-time Markov semi-group of contraction on $L^2(\mathcal{X})$ and is measure-preserving with respect to $P_0$ defined in Section \ref{sec:formulations}. \ {For clarity, we denote $\mathbb{E}$ the total expectation with respect to all randomness, i.e., $\mathbb{E} = {\bf M E}$, in the remaining part of this paper.}
\begin{theorem}\label{measure-preserving}
 $P_0$ is an invariant probability measure of $\eta_n$, i.e., $P_0$ is an invariant measure of the Markov semigroup $\{S_n\}$.
\end{theorem}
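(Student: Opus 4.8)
The plan is to establish the one-step invariance $\int_{\mathcal{X}} S_1 f \, dP_0 = \int_{\mathcal{X}} f \, dP_0$ for $f$ in a dense class of bounded functions, and then pass to all $S_n$ via the semigroup identity $S_n = (S_1)^n$ (which holds because the base environment semigroup $P^t$ is time-homogeneous and the Gaussian increments $\bm{\xi}_n$ are i.i.d.) and to all of $L^2(\mathcal{X})$ by density together with the $L^2$-contraction of $S_n$ and the $L^2$-continuity of $g\mapsto\int g\,dP_0$. The core idea is to read off from the Lie--Trotter splitting \eqref{one-step} that one step of the environment chain is the composition of three operations — the deterministic volume-preserving drift $\zeta \mapsto \tau_{{\bf B}_{\Delta t}(\zeta)}\zeta$, the Gaussian diffusion $\zeta \mapsto \tau_{\sigma\bm{\xi}}\zeta$, and the base-environment evolution governed by $P^{\Delta t}$ — and to show that each of the three preserves the integral against $P_0$.

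Two of the three factors are immediate. The diffusion factor preserves $P_0$ because each fixed shift $\tau_{\bf y}$ is measure-preserving by (T3), so averaging $f(\tau_{\sigma\bm{\xi}}\cdot)$ over the mean-free Gaussian $\bm{\xi}$ and then integrating against $P_0$ returns $\int f \, dP_0$ by Fubini; the base-evolution factor preserves $P_0$ by property (P2) of $P^t$, namely $\int P^{\Delta t} h \, dP_0 = \int h \, dP_0$. The substance of the theorem is the drift factor, and this is exactly where the volume-preserving structure enters.

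For the drift step I would invoke the spatial (Birkhoff--Wiener) ergodic theorem afforded by ergodicity (T5): for bounded $h$ and $P_0$-a.e.\ $\chi$, $\int_{\mathcal{X}} h \, dP_0 = \lim_{R\to\infty}|B_R|^{-1}\int_{B_R} h(\tau_{\bf x}\chi)\, d{\bf x}$ over balls $B_R$ of radius $R$. Applying this to $h(\tau_{{\bf B}_{\Delta t}(\cdot)}\cdot)$ and using Assumption \ref{assump-stationary-B} together with $\tau_{\bf a}\tau_{\bf b}=\tau_{{\bf a}+{\bf b}}$, the integrand becomes $h(\tau_{{\bf \Phi}_{\Delta t}^{\chi}({\bf x})}\chi)$, where ${\bf \Phi}_{\Delta t}^{\chi}({\bf x}) = {\bf x} + {\bf B}_{\Delta t}^{\chi}({\bf x})$ is the frozen-environment numerical flow. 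Because the scheme is volume-preserving, $\det({\bf D_{{\bf x}}\Phi}_{\Delta t}^{\chi}) = 1$, so the change of variables ${\bf y} = {\bf \Phi}_{\Delta t}^{\chi}({\bf x})$ has unit Jacobian and carries $B_R$ onto ${\bf \Phi}_{\Delta t}^{\chi}(B_R)$; since $||{\bf B}_{\Delta t}||_{L^\infty} = O(\Delta t)$ by Assumption \ref{assump-regularity-B}, this image differs from $B_R$ only in a boundary layer of volume $O(R^{d-1})$, which is negligible against $|B_R|$ as $R\to\infty$. The spatial average of $h(\tau_{\bf y}\chi)$ over $B_R$ then converges once more to $\int h \, dP_0$, proving that the drift preserves $P_0$. (Invertibility of ${\bf \Phi}_{\Delta t}^{\chi}$ as a global diffeomorphism of $R^d$, needed for the change of variables, follows for small $\Delta t$ from ${\bf D_{{\bf x}}\Phi}_{\Delta t}^{\chi} = {\bf I}_d + O(\Delta t)$, again by Assumption \ref{assump-regularity-B}.)

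I expect the main obstacle to be the bookkeeping that links the abstract composition of the three operations to the actual update \eqref{one-step}: the drift amplitude ${\bf B}_{\Delta t}(\omega(n\Delta t))$ is measurable with respect to the environment at the start of the step, whereas it is applied jointly with the advance of the base process to the environment at the end of the step. To commute the fixed-time spatial shift past the base evolution — that is, to factor the one-step kernel as $S_1 = \mathcal{D}\circ P^{\Delta t}\circ \mathcal{R}$, with $\mathcal{D}$ and $\mathcal{R}$ the drift and diffusion operators — one uses the spatial homogeneity of the environment dynamics (equivalently, that $U^{\bf x}$ commutes with $P^t$), which is encoded in the joint stationarity of the field and in the invariance of $P$ under $\tau_{\bf x}$ from Proposition \ref{P-stationary}. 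Once this commutation and the conditioning via the Markov property are set up carefully, the three measure-preserving statements chain together to yield $\int S_1 f\, dP_0 = \int f \, dP_0$, and hence the invariance of $P_0$ under the whole semigroup $\{S_n\}$.
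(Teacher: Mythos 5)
Your proposal is correct, but it reaches the conclusion by a genuinely different route than the paper. The paper's proof is a single direct computation with the one-step transition density $p^1_{\chi}({\bf x},{\bf y})$ from \eqref{TransitionDensity}: it uses the translation covariance $p^1_{\tau_{\bf x}\chi}({\bf y},{\bf z})=p^1_{\chi}({\bf x}+{\bf y},{\bf x}+{\bf z})$ (Assumption \ref{assump-stationary-B}) together with (T3) to reduce the invariance to the normalization identity $\int_{R^d}p^1_{\chi}(-{\bf y},{\bf 0})\,d{\bf y}=1$, and this identity is exactly where $\det({\bf D\Phi}^{\chi}_{\Delta t})=1$ enters, via a unit-Jacobian change of variables inside the Gaussian integral \eqref{normalized}; property (P2) then finishes the argument. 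You instead factor $S_1$ into drift, Gaussian diffusion, and base evolution, and prove each factor preserves $P_0$ separately, handling the drift factor by Wiener's spatial ergodic theorem plus a unit-Jacobian change of variables over large balls and an $O(R^{d-1})$ boundary-layer estimate. Your factorization is legitimate: the commutation $U^{\bf x}P^t=P^tU^{\bf x}$ that you need is the same spatial-homogeneity property the paper uses implicitly in passing from ${\bf E}_{\tau_{-{\bf y}}\chi}f(\tau_{\bf y}\omega(\Delta t))$ to ${\bf E}_{\chi}f(\omega(\Delta t))$ in \eqref{ES1fEf} (though attributing it to Proposition \ref{P-stationary}, a statement about the annealed path measure, is slightly imprecise --- it is really part of the joint stationarity of the quenched laws). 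The trade-offs: your argument is more modular and isolates the cleanest possible role of volume preservation --- the purely deterministic map $\chi\mapsto\tau_{{\bf B}_{\Delta t}(\chi)}\chi$ already preserves $P_0$, a statement that survives even at $\sigma=0$, whereas the paper's kernel computation needs $\sigma>0$ for densities to exist. The cost is that you invoke ergodicity (T5) through the ergodic theorem, plus global invertibility of ${\bf \Phi}^{\chi}_{\Delta t}$, while the paper's proof needs only stationarity (T3) and is shorter and more elementary; also note that your appeal to the $L^2$-contraction of $S_n$ for the density step is a forward reference (the paper proves contraction in Theorem \ref{thm:exponentialdecay} only after invariance), but this is harmless since invariance on bounded functions already determines the measure identity and extends by standard approximation.
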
 
\begin{proof} Let $p^1_{\chi}({\bf x,y})$ denote the transition probability density of the solution process, which is defined by applying the numerical scheme \eqref{one-step} for one time step. 
For simplicity of notation, let ${\bf x}$ be the current solution and ${\bf y}$ be the solution 
obtained by applying the scheme \eqref{one-step} with time step $\Delta t$. 
Notice that $\bm{\xi}_n$ in \eqref{one-step} is a mean-free Gaussian random vector. We have  

\begin{align}
p^1_{\chi}({\bf x,y}) = \frac{1}{(2\pi\sigma^2\Delta t)^{d/2}} \exp\left(-\frac{||{\bf y-x-B}^{\chi}_{\Delta t} ({\bf x})||^2}{2\sigma^2\Delta t}\right)=\frac{1}{(2\pi\sigma^2\Delta t)^{d/2}}\exp\left(-\frac{||{\bf y-\Phi}^{\chi}_{\Delta t} ({\bf x})||^2}{2\sigma^2\Delta t}\right).
\label{TransitionDensity}
\end{align}
Let us define $p_{0}({\bf x,y}) = \frac{1}{(2\pi\sigma^2\Delta t)^{d/2}} \exp\left(-\frac{||{\bf y-x}||^2}{2\sigma^2\Delta t}\right)$. Then, we can verify that 

\begin{align}\label{normalized}
\int p^1_{\chi}({\bf x,y})d{\bf x} &= \int p_0({\bf x+B}^{\chi}_{\Delta t} ({\bf x),y} )d{\bf x}, \nonumber \\
&=\int p_0({\bf z,y})\det({\bf D\Phi}^{\chi}_{\Delta t})^{-1} d{\bf z} =\int p_0({\bf z,y})d{\bf z}= 1 ,~ a.e. ~\chi,
\end{align}
where we have used the fact that the numerical scheme \eqref{numerical-integrator-sub1} for sub-problem \eqref{sub-problem-det} is volume-preserving, i.e., $\det({\bf D\Phi}^{\chi}_{\Delta t})=1$. Thus, for all $f \in L^2(\mathcal{X})$, we have 

\begin{align}\label{ES1fEf}
\int_{\mathcal{X}} S_1 f(\chi) P_0(d\chi) &= \int_{\mathcal{X}} {\bf E}_{\chi} f(\eta_1) P_0(d\chi)  
= \int_{\mathcal{X}} P_0(d\chi) \int_{R^d}   p^1_{\chi}({\bf 0,y}) {\bf E}_{\chi}f(\tau_{\bf y} \omega(\Delta t)) d{\bf y}, \nonumber\\
&= \int_{\mathcal{X}} {\bf E}_{\chi}f(\omega(\Delta t))P_0(d\chi) \int_{R^d}   p^1_{\tau_{\bf -y}\chi}(\bf0,y)dy, \nonumber\\
&= \int_{\mathcal{X}} {\bf E}_{\chi} f(\omega(\Delta t))P_0(d\chi)\int_{R^d}   p^1_{\chi}({\bf -y,0)dy},\nonumber\\
&= \int_{\mathcal{X}} {\bf E}_{\chi} f(\omega(\Delta t))P_0(d\chi),
\end{align}
where we have used the facts that $p^1_{\tau_{\bf x} \chi}({\bf y,z}) = p^1_{\chi}({\bf x+y,x+z})$ and 
$\int_{R^d}   p^1_{\chi}({\bf -y,0)dy}=1$. \ {The first equality is easy to verify, since}

\ {
\begin{align*}
p^1_{\tau_{\bf x} \chi} ({\bf y},{\bf z}) &= p_0 ({\bf y}+{\bf B}_{\Delta t}^{\tau_{\bf x} \chi} ({\bf y}),{\bf z}) = p_0 ({\bf y}+{\bf B}_{\Delta t}^{\chi}({\bf x}+{\bf y}),{\bf z}), \\
&= p_0 ({\bf x}+{\bf y}+{\bf B}_{\Delta t}^{\chi}({\bf x}+{\bf y}),{\bf x}+{\bf z}) = p^1_{\chi}({\bf x}+{\bf y},{\bf x}+{\bf z}).
\end{align*}}
Thus, we obtain from \eqref{ES1fEf} that  
${\bf E } S_1 f = {\bf E } P^{\Delta t}f  = {\bf E} f$, where $P^{\Delta t}$ is measure-preserving by property (P2) in Section \ref{sec:formulations}. Similar argument shows that ${\bf E} S_n f = {\bf E} S_{n-1} f$ for all $n$. We prove that $S_n$ is measure-preserving.  
\end{proof}
\begin{remark}\label{rmk:det1}
Theorem	\ref{measure-preserving} plays an important role in the remaining part of our convergence analysis. 
Throughout the proof, one can see that using a volume-preserving numerical scheme for solving sub-problem \eqref{sub-problem-det} is essential. 
\end{remark}
\begin{remark}\label{rmk:randomness}
In the proof of Theorem \ref{measure-preserving}, the probability measures $p^1_{\chi}({\bf x,y})d{\bf y}$ and 
$p_{0}({\bf x,y})d{\bf y}$ are associated with the Brownian motion in the passive tracer model. While $P_0(d\chi)$ is the probability measure associated with the randomness in the velocity field and initial data. In the remaining part of this paper, we shall keep the same notations. 
\end{remark}
The following lemma will be very useful in our analysis. 
\begin{lemma}\label{lemma} 
For any ${\bf y} \in R^d$ and $f\in L^2(\mathcal{X})$, we have that 

\begin{align}
\mathbb{E} f(\tau_{\bf y}\eta_n) = \mathbb{E} f(\eta_{n-1}) = \mathbb{E} f.
\label{EfEfEf1}
\end{align}
Moreover, 

\begin{align}
 \mathbb{E}f(\eta_{n+1}) = \mathbb{E}f\Big(\tau_{{\bf X}^{\omega}_{n}+{\bf B}_{\Delta t}(\eta_n)}\omega\big((n+1)\Delta t\big)\Big)  =  \mathbb{E}f.
\label{EfEfEf2}
\end{align}
\end{lemma}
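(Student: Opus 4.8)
The plan is to show that every random element of $\mathcal{X}$ appearing in \eqref{EfEfEf1}--\eqref{EfEfEf2} has law $P_0$, so that all the listed expectations collapse to $\int_{\mathcal{X}}f\,dP_0$. Throughout I read $\mathbb{E}f$ as $\mathbb{E}f(\eta_0)=\int_{\mathcal{X}}f\,dP_0$, which is legitimate because $\eta_0=\omega(0)$ is $P_0$-distributed and ${\bf M}$ acts trivially on functions of $\chi$ alone; more generally, unwinding the definition \eqref{discrete-Sn} gives $\mathbb{E}f(\eta_n)=\int_{\mathcal{X}}S_nf\,dP_0$.

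For \eqref{EfEfEf1} I would argue as follows. Theorem \ref{measure-preserving} states that each $S_n$ preserves $P_0$, hence $\mathbb{E}f(\eta_n)=\int S_nf\,dP_0=\int f\,dP_0=\mathbb{E}f$, and identically $\mathbb{E}f(\eta_{n-1})=\mathbb{E}f$. For the shifted quantity I rewrite $f(\tau_{\bf y}\eta_n)=(f\circ\tau_{\bf y})(\eta_n)$ and apply the same identity to $f\circ\tau_{\bf y}\in L^2(\mathcal{X})$, obtaining $\mathbb{E}f(\tau_{\bf y}\eta_n)=\int (f\circ\tau_{\bf y})\,dP_0$; property (T3), i.e.\ the $\tau_{\bf x}$-invariance of $P_0$, then turns this into $\int f\,dP_0=\mathbb{E}f$. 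This settles \eqref{EfEfEf1}.

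For \eqref{EfEfEf2} I first decompose the one-step map. Combining \eqref{one-step}, \eqref{Bdefinition} and the group law (T1), the environment \eqref{environment2} factors as $\eta_{n+1}=\tau_{\sigma\bm\xi_n}\zeta_n$ with $\zeta_n:=\tau_{{\bf X}^\omega_n+{\bf B}_{\Delta t}(\eta_n)}\omega((n+1)\Delta t)$, which is exactly the middle argument in \eqref{EfEfEf2}. The rightmost equality $\mathbb{E}f(\eta_{n+1})=\mathbb{E}f$ is just \eqref{EfEfEf1} at index $n+1$, so the only remaining point is $\mathbb{E}f(\zeta_n)=\int f\,dP_0$. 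Here I use that $\eta_n$ is $P_0$-distributed (Theorem \ref{measure-preserving}) and that $\zeta_n$ is produced from $\eta_n$ by two operations: first the purely deterministic displacement $\chi\mapsto\tau_{{\bf B}_{\Delta t}(\chi)}\chi$, i.e.\ moving the spatial label by the volume-preserving integrator ${\bf\Phi}^{\omega(n\Delta t)}_{\Delta t}$ of \eqref{Bdefinition}, and then advancing the environment clock by one step. The clock advance preserves $P_0$ by property (P2) in Section \ref{sec:formulations}, and the displacement preserves $P_0$ by repeating verbatim the change of variables in \eqref{normalized}--\eqref{ES1fEf}: the only facts used there are $\det({\bf D\Phi}^{\chi}_{\Delta t})=1$ and the covariance $p^1_{\tau_{\bf x}\chi}({\bf y},{\bf z})=p^1_\chi({\bf x}+{\bf y},{\bf x}+{\bf z})$, and neither invokes the Gaussian. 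Consequently $\mathbb{E}f(\zeta_n)=\int f\,dP_0=\mathbb{E}f$, and the three quantities in \eqref{EfEfEf2} agree.

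The step I expect to be the main obstacle is the measure-preservation of the nonlinear, state-dependent displacement $\chi\mapsto\tau_{{\bf B}_{\Delta t}(\chi)}\chi$: unlike the additive Gaussian increment, this is a $\chi$-dependent random shift whose invariance of $P_0$ hinges entirely on the unit-Jacobian property $\det({\bf D\Phi}^{\chi}_{\Delta t})=1$ together with the stationarity (T3) of the group $\tau_{\bf x}$. I would therefore isolate the self-contained claim $\int_{\mathcal{X}} g(\tau_{{\bf B}_{\Delta t}(\chi)}\chi)\,P_0(d\chi)=\int_{\mathcal{X}}g\,dP_0$ for all $g\in L^2(\mathcal{X})$ and prove it by the change of variables of \eqref{normalized} with the Gaussian kernel $p_0$ replaced by the test function $g$. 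An alternative is to keep the Gaussian and factor it out using the independence of $\bm\xi_n$ from $\zeta_n$, but that route additionally needs a density argument to pass from the range of the Gaussian smoothing operator back to all of $L^2(\mathcal{X})$, so the direct volume-preservation computation is the cleaner option.
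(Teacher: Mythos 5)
Your argument for \eqref{EfEfEf1} is correct, and is in fact tidier than the paper's: instead of re-running the one-step transition-density computation, you combine Theorem \ref{measure-preserving} (via $\mathbb{E}g(\eta_n)=\int_{\mathcal{X}} S_n g\,dP_0=\int_{\mathcal{X}} g\,dP_0$, which follows from \eqref{discrete-Sn} and Fubini) with property (T3) applied to $g=f\circ\tau_{\bf y}$. No objection there.

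The gap is in \eqref{EfEfEf2}, at exactly the step you flag as the main obstacle. You reduce everything to the claim that the state-dependent deterministic displacement $\chi\mapsto\tau_{{\bf B}_{\Delta t}(\chi)}\chi$ preserves $P_0$, and you propose to prove it by ``repeating verbatim'' the change of variables in \eqref{normalized}--\eqref{ES1fEf}, with the Gaussian kernel $p_0$ replaced by the test function $g$. This cannot work as stated. The computations \eqref{normalized}--\eqref{ES1fEf} are integrals over the spatial variable in $R^d$ against the Gaussian density, and the unit-Jacobian property $\det({\bf D\Phi}^{\chi}_{\Delta t})=1$ enters precisely as a change of variables \emph{inside that $R^d$-integral}: it is what makes the kernel doubly stochastic, $\int_{R^d} p^1_\chi({\bf x},{\bf y})\,d{\bf x}=1$. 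In your isolated claim $\int_{\mathcal{X}}g(\tau_{{\bf B}_{\Delta t}(\chi)}\chi)\,P_0(d\chi)=\int_{\mathcal{X}}g\,dP_0$ there is no integral over $R^d$ at all ($g$ lives on $\mathcal{X}$, not on $R^d$), so there is nothing in which to change variables; volume preservation of ${\bf\Phi}^\chi_{\Delta t}$ is a statement about Lebesgue measure on physical space and does not transfer to the abstract measure $P_0$ by that computation. The claim itself is true, but it needs a genuinely different mechanism, for instance a Weyl-type averaging argument: use (T3) and the covariance ${\bf\Phi}^{\tau_{\bf x}\chi}_{\Delta t}({\bf 0})={\bf\Phi}^\chi_{\Delta t}({\bf x})-{\bf x}$ to write the left-hand side as $\frac{1}{|B_R|}\int_{B_R}\int_{\mathcal{X}}g\big(\tau_{{\bf\Phi}^\chi_{\Delta t}({\bf x})}\chi\big)\,P_0(d\chi)\,d{\bf x}$, change variables ${\bf y}={\bf\Phi}^\chi_{\Delta t}({\bf x})$ in the now-available $R^d$-integral, and let $R\to\infty$ using the boundedness of the displacement to control the boundary layer ${\bf\Phi}^\chi_{\Delta t}(B_R)\,\triangle\, B_R$. (Your factorization ``displacement, then clock advance'' also tacitly uses that $P^{\Delta t}$ commutes with the shifts $\tau_{\bf a}$, i.e.\ spatial homogeneity of the environment dynamics; this should at least be stated.)

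Note that the paper's own proof of \eqref{EfEfEf2} never needs your displacement claim: it conditions on $\eta_n$, writes $\eta_{n+1}=\tau_{\sigma\bm\xi_n}\zeta_n$ with $\zeta_n=\tau_{{\bf X}^\omega_n+{\bf B}_{\Delta t}(\eta_n)}\omega((n+1)\Delta t)$, integrates out the independent Gaussian increment, and uses stationarity (the same shift-invariance mechanism as in \eqref{EfEfEf1}, i.e.\ that $\tau$ is measure-preserving for the whole ensemble) to delete the shift $\tau_{\sigma{\bf z}}$ under the expectation; the Gaussian mass then integrates to one, giving $\mathbb{E}f(\eta_{n+1})=\mathbb{E}f(\zeta_n)$ directly, with the common value $\mathbb{E}f$ supplied by \eqref{EfEfEf1} at index $n+1$, as you also note. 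So either adopt that stationarity route, or supply the averaging argument above; as written, the central step of your proof of \eqref{EfEfEf2} is unsupported.
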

\begin{proof} We prove the above equations through direct calculations. For the equation \eqref{EfEfEf1}, we 
have

\begin{align}
&\mathbb{E} f(\tau_{\bf y}\eta_n) = {\bf E M E}_{\eta_{n-1}} f(\tau_{\bf y}\tilde\eta_1)  
= \int_{\mathcal{X}} P_0(d\chi) {\bf M}{\bf E}_\chi\big[\int_{R^d}   p^1_{\eta_{n-1}}({\bf 0,z}) {\bf E}_{\eta_{n-1}}f\big(\tau_{\bf y+z} \omega(\Delta t)\big) d{\bf z}\big] \nonumber\\
=& \int_{\mathcal{X}} {\bf M E}_{\chi}\big[{\bf E}_{\eta_{n-1}}f\big(\omega(\Delta t)\big)P_0(d\chi) \int_{R^d}   p^1_{\tau_{\bf -y-z}\eta_{n-1}}({\bf 0,z})d{\bf z}\big] \nonumber\\
=& \int_{\mathcal{X}} {\bf M E}_{\chi}\big[{\bf E}_{\eta_{n-1}}f\big(\omega(\Delta t)\big)P_0(d\chi) \int_{R^d}   p^1_{\eta_{n-1}}({\bf -y-z,-y})d{\bf z}\big] \nonumber\\
=& \int_{\mathcal{X}} {\bf M E}_{\chi}\big[{\bf E}_{\eta_{n-1}}f\big(\omega(\Delta t)\big)\big]P_0(d\chi)= \int_{\mathcal{X}} {\bf M E}_{\chi}\big[f(\eta_{n-1})\big]P_0(d\chi),
\end{align} 
where $ \tilde\eta_1 $ is defined according to \eqref{environment2} but with initial condition $\tilde\eta_0 = \eta_{n-1}$. Thus, the first equation in \eqref{EfEfEf1} is proved. The second equation in \eqref{EfEfEf1} 
is obvious according to the definition \eqref{discrete-Sn} and $S_n$ is measure-preserving. 

To prove the equation \eqref{EfEfEf2}, let ${\bf Y}_n^{\omega} = {\bf X}^{\omega}_{n}+{\bf B}_{\Delta t}(\eta_n) = {\bf X}^{\omega}_{n+1} - \sigma {\bm \xi}_n$. Then, we have

\begin{align}
\mathbb{E}f(\eta_{n+1}) & = {\bf E M E}_{\eta_{n}} f\big(\tau_{{\bf Y}_n^\omega+\sigma\bm{\xi}_n}\omega(\Delta t)\big) 
=\int_{\mathcal{X}} P_0(d\chi) \int_{R^d}   p_0({\bf 0,z}) {\bf M E}_\chi{\bf E}_{\eta_n}f\big(\tau_{\bf z}\tau_{{\bf Y}_n^\omega} \omega(\Delta t)\big) d{\bf z}, \nonumber\\
&= \int_{\mathcal{X}} {\bf M E}_\chi{\bf E}_{\eta_n}f\big(\tau_{{\bf Y}_n^\omega} \omega(\Delta t)\big)P_0(d\chi) \int_{R^d}   p_0({\bf 0,z})d{\bf z}, \nonumber\\
&=\mathbb{E}f\Big(\tau_{{\bf X}^{\omega}_{n}+{\bf B}_{\Delta t}(\eta_n)}\omega\big((n+1)\Delta t\big)\Big).
\end{align}
Notice that in the proof we use the property that $\tau $ is a measure-preserving transformation. 
\end{proof}
Equipped with these preparations, we can state the main results. The first result is that 
the operator $S_n$ defined in \eqref{discrete-Sn} is a contractive map on $L^2(\mathcal{X})$.
\begin{theorem}\label{thm:exponentialdecay} 
$S_n$ has the property that

\ {
\begin{align}
||S_n f||_{L^2(\mathcal{X})} \le \exp(-2c_1 n\Delta t) ||f||_{L^2(\mathcal{X})},
\label{eq:exponentialdecay}
\end{align}}
for all $f \in L_0^2(\mathcal{X}).$
\end{theorem}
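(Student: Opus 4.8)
The plan is to reduce \eqref{eq:exponentialdecay} to a one-step contraction estimate and then iterate. Because $\{\eta_n\}$ is a time-homogeneous Markov chain we have $S_n = S_1^n$, and Theorem \ref{measure-preserving} guarantees that $S_1$ maps $L_0^2(\mathcal{X})$ into itself. Hence it suffices to establish $||S_1 f||_{L^2(\mathcal{X})} \le \exp(-2c_1\Delta t)||f||_{L^2(\mathcal{X})}$ for every $f \in L_0^2(\mathcal{X})$; applying this bound $n$ times then yields \eqref{eq:exponentialdecay}.

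The key step is to factorize $S_1$ into a purely temporal-relaxation part, which carries all the decay, and an advection-diffusion part, which is merely norm non-increasing. Starting from $\eta_0=\chi$ and following the first line of \eqref{ES1fEf}, the Markov property (P3) identifies the environment evolution $\omega(0)=\chi\mapsto\omega(\Delta t)$ with $P^{\Delta t}$, and together with $p^1_\chi({\bf 0},{\bf y})=p_0({\bf B}_{\Delta t}(\chi),{\bf y})$ from \eqref{TransitionDensity} I would write
\begin{equation}
S_1 f(\chi) = \int_{R^d} p^1_\chi({\bf 0},{\bf y})\,\big(P^{\Delta t}U^{\bf y}f\big)(\chi)\,d{\bf y},\nonumber
\end{equation}
where $U^{\bf y}f(\cdot)=f(\tau_{\bf y}\cdot)$. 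I then invoke the spatial homogeneity of the environment dynamics, i.e. the commutation $P^{\Delta t}U^{\bf y}=U^{\bf y}P^{\Delta t}$ (a consequence of the joint space-time stationarity, cf. Proposition \ref{P-stationary}), to pull $P^{\Delta t}$ outside. Setting $h:=P^{\Delta t}f$ this gives $S_1 f(\chi)=\mathcal{G}h(\chi)$, where $\mathcal{G}h(\chi):=\int_{R^d}p_0({\bf B}_{\Delta t}(\chi),{\bf y})\,h(\tau_{\bf y}\chi)\,d{\bf y}$.

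With this factorization the estimate splits into two independent facts. First, $P^{\Delta t}$ is measure-preserving, so $h\in L_0^2(\mathcal{X})$, and the spectral-gap bound \eqref{exponentialdecayproperty} gives $||h||_{L^2(\mathcal{X})}\le\exp(-2c_1\Delta t)||f||_{L^2(\mathcal{X})}$. Second, I would show $\mathcal{G}$ is an $L^2(\mathcal{X})$-contraction: since $p_0({\bf B}_{\Delta t}(\chi),\cdot)$ is a probability density, Jensen's inequality yields $||\mathcal{G}h||^2\le\int_{\mathcal{X}}\int_{R^d}p^1_\chi({\bf 0},{\bf y})|h(\tau_{\bf y}\chi)|^2\,d{\bf y}\,P_0(d\chi)$; substituting $\chi\mapsto\tau_{-{\bf y}}\chi$ (which preserves $P_0$ by (T3)) and using $p^1_{\tau_{-{\bf y}}\chi}({\bf 0},{\bf y})=p^1_\chi({\bf -y},{\bf 0})$ turns the right-hand side into $\int_{\mathcal{X}}|h(\chi)|^2\big(\int_{R^d}p^1_\chi({\bf -y},{\bf 0})\,d{\bf y}\big)P_0(d\chi)$, whose inner integral equals $1$ by the volume-preserving normalization \eqref{normalized}. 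Thus $||\mathcal{G}h||\le||h||$, and combining the two facts gives $||S_1 f||=||\mathcal{G}h||\le||h||\le\exp(-2c_1\Delta t)||f||$, which iterates to \eqref{eq:exponentialdecay}.

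The conceptual content is that all of the decay comes from the temporal-relaxation factor $P^{\Delta t}$, while the discretized advection-diffusion $\mathcal{G}$ only contracts (operator norm $\le 1$); consequently the discretization does not degrade the continuous rate $c_1$. The main obstacle is precisely the contraction $||\mathcal{G}h||\le||h||$: it hinges on the normalization $\int_{R^d}p^1_\chi({\bf -y},{\bf 0})\,d{\bf y}=1$, which fails without the volume-preserving identity $\det({\bf D}{\bf \Phi}^\chi_{\Delta t})=1$ --- this is exactly where structure preservation is indispensable (cf. Remark \ref{rmk:det1}). A secondary delicate point is justifying the commutation $P^{\Delta t}U^{\bf y}=U^{\bf y}P^{\Delta t}$ rigorously from the spatial stationarity of the environment process.
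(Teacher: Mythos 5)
Your proposal is correct and follows essentially the same route as the paper's own proof: a one-step bound obtained from Jensen/Cauchy--Schwarz against the probability density $p^1_\chi(\mathbf{0},\mathbf{y})$, the measure-preserving shift $\chi\mapsto\tau_{-\mathbf{y}}\chi$ together with $p^1_{\tau_{\mathbf{x}}\chi}(\mathbf{y},\mathbf{z})=p^1_{\chi}(\mathbf{x}+\mathbf{y},\mathbf{x}+\mathbf{z})$ and the volume-preserving normalization \eqref{normalized}, reduction to $\|P^{\Delta t}f\|_{L^2(\mathcal{X})}$ via the spectral gap \eqref{exponentialdecayproperty}, and iteration. Your explicit factorization $S_1=\mathcal{G}\circ P^{\Delta t}$ (and the commutation $P^{\Delta t}U^{\mathbf{y}}=U^{\mathbf{y}}P^{\Delta t}$ you flag) is just a cleaner reorganization of the same computation the paper performs implicitly in \eqref{Snf_ef}.
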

\begin{proof}
We first consider the case when $n=1$. The key observation is that

\begin{align}
&\int_{\mathcal{X}} S_1 f(\chi) \cdot S_1 f(\chi) P_0(d\chi) = \int_{\mathcal{X}} {\bf E}_{\chi} f(\eta_1)\cdot{\bf E}_{\chi} f(\eta_1) P_0(d\chi), \nonumber\\
=& \int_{\mathcal{X}} P_0(d\chi) \int_{R^d}   p^1_{\chi}({\bf 0,y}) {\bf M E}_{\chi}f(\tau_{\bf y} \omega(\Delta t)) d{\bf y} \cdot \int_{R^d}   p^1_{\chi}({\bf 0,y}) {\bf M E}_{\chi}f(\tau_{\bf y} \omega(\Delta t)) d{\bf y}, \nonumber\\
\le&  \int_{\mathcal{X}} P_0(d\chi) \int_{R^d}   p^1_{\chi}({\bf 0,y}) {\bf E}_{\chi}f(\tau_{\bf y} \omega(\Delta t))  \cdot  {\bf E}_{\chi}f(\tau_{\bf y} \omega(\Delta t)) d{\bf y}, \nonumber\\
=&\int_{\mathcal{X}}{\bf E}_{\chi}f(\omega(\Delta t))\cdot {\bf E}_{\chi}f(\omega(\Delta t)) P_0(d\chi) \int_{R^d}   p^1_{\chi}({\bf -y,0})  d{\bf y}, \nonumber\\
=&\int_{\mathcal{X}}P^{\Delta t}f( \chi)\cdot P^{\Delta t}f(\chi) P_0(d\chi), 
\label{Snf_ef}
\end{align}
where $P^{\Delta t}$ is a strongly continuous Markov semigroup on $L^2(\mathcal{X})$. In the third line of \eqref{Snf_ef}, we use the fact that $p^1_{\chi}({\bf 0,y})$ is a probability density function so we can easily get the result by using the Cauchy-Schwarz inequality. 
Therefore, we obtain 

\ {
\begin{align}
||S_1 f||_{L^2(\mathcal{X})} \le ||P^{\Delta t} f||_{L^2(\mathcal{X})} \le \exp(-2c_1 \Delta t) ||f||_{L^2(\mathcal{X})},
\end{align}}
where the exponential decay property \eqref{exponentialdecayproperty} is used. The assertion in \eqref{eq:exponentialdecay} can be obtained if we repeat to use the above property $n$ times.
\end{proof}
Next, we define $\bar {\bf B}_{\Delta t} = \mathbb{E} {\bf B}_{\Delta t}$ and  $\tilde {\bf B}_{\Delta t} = {\bf B}_ {\Delta t}-\bar {\bf B}_{\Delta t}$. We aim to get some estimates for the mean values $\bar {\bf B}_{\Delta t}$ and  $\mathbb{E}{\bf X}^\omega_{n}$, which are important in our convergence analysis
for the effective diffusivity later. 
 
\begin{theorem}\label{EBbound}
\ {Under the Assumptions \ref{assump-regularity-b}, \ref{assump-stationary-B} and \ref{assump-regularity-B}}, if we choose a volume-preserving numerical scheme \eqref{numerical-integrator-sub1} to compute the sub-problem \eqref{sub-problem-det}, where the local truncation error is $O(\Delta t)^2$, then	
$\bar {\bf B}_{\Delta t}$ is of order $O(\Delta t)^2$. In addition, 
$\mathbb{E}{\bf X}^\omega_{n}-n\bar {\bf B}_{\Delta t}$ is bounded.
\end{theorem}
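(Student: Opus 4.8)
The plan is to prove the two assertions separately, first controlling the mean of a single increment and then telescoping the one-step map \eqref{one-step}. For the first assertion I would split the increment into its consistent leading part and a higher-order remainder. Since the volume-preserving integrator \eqref{numerical-integrator-sub1} is first-order consistent with local truncation error $O(\Delta t)^2$, one has, as functions on $\mathcal{X}$,
\begin{equation}
{\bf B}_{\Delta t}(\chi) = \Delta t\,{\bf b}(\chi) + {\bf R}_{\Delta t}(\chi), \qquad \|{\bf R}_{\Delta t}\|_{L^\infty(\mathcal{X})} = O(\Delta t)^2,\nonumber
\end{equation}
where the remainder bound follows from the truncation-error estimate \eqref{LocalTruncationError2} under Assumption \ref{assump-regularity-b} together with Assumption \ref{assump-regularity-B}. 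Because ${\bf b}$ is mean-zero and $P_0$-stationary, $\int_{\mathcal{X}}{\bf b}\,dP_0 = {\bf E}_0{\bf b} = 0$, so integrating the decomposition against the invariant measure cancels the $O(\Delta t)$ term exactly and leaves
\begin{equation}
\bar{\bf B}_{\Delta t} = \int_{\mathcal{X}}{\bf B}_{\Delta t}\,dP_0 = \Delta t\int_{\mathcal{X}}{\bf b}\,dP_0 + \int_{\mathcal{X}}{\bf R}_{\Delta t}\,dP_0 = O(\Delta t)^2.\nonumber
\end{equation}
The point to be careful about is that this cancellation is exact: it relies on the leading coefficient being precisely ${\bf b}$ (consistency of the volume-preserving scheme) and on the mean-zero, measure-preserving structure, so that only the genuinely quadratic remainder survives.

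For the second assertion I would sum the one-step map. Telescoping ${\bf X}_{k+1}^\omega-{\bf X}_k^\omega = {\bf B}_{\Delta t}(\eta_k)+\sigma\bm\xi_k$ from $k=0$ to $n-1$, and using ${\bf X}_0^\omega = 0$ together with $\mathbb{E}\bm\xi_k = 0$, gives $\mathbb{E}{\bf X}_n^\omega = \sum_{k=0}^{n-1}\mathbb{E}{\bf B}_{\Delta t}(\eta_k)$. Subtracting $n\bar{\bf B}_{\Delta t}$ and recalling $\tilde{\bf B}_{\Delta t} = {\bf B}_{\Delta t}-\bar{\bf B}_{\Delta t}\in L_0^2(\mathcal{X})$ yields
\begin{equation}
\mathbb{E}{\bf X}_n^\omega - n\bar{\bf B}_{\Delta t} = \sum_{k=0}^{n-1}\mathbb{E}\,\tilde{\bf B}_{\Delta t}(\eta_k).\nonumber
\end{equation}
Here Lemma \ref{lemma} (equivalently the invariance of $P_0$ from Theorem \ref{measure-preserving}) gives $\mathbb{E}f(\eta_k)=\mathbb{E}f$ for every $k$, hence $\mathbb{E}\,\tilde{\bf B}_{\Delta t}(\eta_k)=0$ and the difference is in fact identically zero, a fortiori bounded. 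To obtain the more robust, $\Delta t$-uniform statement that does not presuppose starting from the stationary law, I would instead control each centered summand through the ergodic contraction of Theorem \ref{thm:exponentialdecay}: since $\tilde{\bf B}_{\Delta t}$ has zero mean, $\|S_k\tilde{\bf B}_{\Delta t}\|_{L^2(\mathcal{X})}\le \exp(-2c_1 k\Delta t)\|\tilde{\bf B}_{\Delta t}\|_{L^2(\mathcal{X})}$, so summing the geometric series,
\begin{equation}
\big\|\mathbb{E}{\bf X}_n^\omega - n\bar{\bf B}_{\Delta t}\big\| \le \sum_{k=0}^{\infty}\exp(-2c_1 k\Delta t)\,\|\tilde{\bf B}_{\Delta t}\|_{L^2(\mathcal{X})} = \frac{\|\tilde{\bf B}_{\Delta t}\|_{L^2(\mathcal{X})}}{1-\exp(-2c_1\Delta t)}.\nonumber
\end{equation}

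The main obstacle, and the real content of the estimate, is making this bound uniform in both $n$ and $\Delta t$. The geometric sum contributes a factor of order $1/(1-\exp(-2c_1\Delta t)) = O(1/\Delta t)$, which would blow up as $\Delta t\to 0$ were it not compensated; the compensation is supplied by Assumption \ref{assump-regularity-B}, which forces $\|{\bf B}_{\Delta t}\|_{C_b^m(\mathcal{X})}=O(\Delta t)$ and hence $\|\tilde{\bf B}_{\Delta t}\|_{L^2(\mathcal{X})}=O(\Delta t)$, so that the two powers of $\Delta t$ cancel and the right-hand side is $O(1)$ independent of $n$ and $\Delta t$. I would stress that this centering-plus-spectral-gap mechanism is precisely what removes the spurious growth a Gronwall-type argument would introduce: the naive term-by-term bound $\sum_{k=0}^{n-1}\|{\bf B}_{\Delta t}\| = n\,O(\Delta t) = O(n\Delta t)$ grows linearly in the time horizon, so subtracting the drift $n\bar{\bf B}_{\Delta t}$ and invoking the decay \eqref{TimeRelaxation} of the environment process is essential.
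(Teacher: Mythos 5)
Your proof is correct, and its overall strategy --- consistency of the volume-preserving integrator plus mean-zero stationarity for the first claim, telescoping plus the spectral-gap contraction of Theorem \ref{thm:exponentialdecay} for the second --- is essentially the paper's. The one genuine difference is in the first claim: the paper compares ${\bf B}_{\Delta t}$ with the exact Lagrangian increment $\int_0^{\Delta t}{\bf b}(\eta^0_t)\,dt$ (the environment process with $\sigma=0$) and kills its mean via the measure-preservation of the continuous semigroup, $\int_0^{\Delta t}\mathbb{E}S^t{\bf b}\,dt=0$; you instead Taylor-expand one step further, to the frozen-coefficient leading term $\Delta t\,{\bf b}(\chi)$, and use ${\bf E}_0{\bf b}=0$ directly. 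Both cancellations are exact, your extra Taylor step is justified by the same estimate as \eqref{LocalTruncationError2} under Assumption \ref{assump-regularity-b}, and your route is slightly more elementary in that it never invokes the $\sigma=0$ environment semigroup. For the second claim your argument coincides with the paper's: the paper also writes $\mathbb{E}{\bf X}^\omega_n - n\bar{\bf B}_{\Delta t}=\mathbb{E}{\bf X}^\omega_0+\sum_{i=0}^{n-1}\mathbb{E}S_i\tilde{\bf B}_{\Delta t}$ and bounds the sum by the geometric series coming from \eqref{eq:exponentialdecay}. Two of your remarks in fact sharpen the paper's presentation: first, by Theorem \ref{measure-preserving} each term $\mathbb{E}S_i\tilde{\bf B}_{\Delta t}$ vanishes, so under the stationary initial law the difference is exactly zero, whereas the paper only asserts boundedness; second, you make explicit that the geometric-series bound $\|\tilde{\bf B}_{\Delta t}\|_{L^2(\mathcal{X})}/\bigl(1-\exp(-2c_1\Delta t)\bigr)$ is uniform in $\Delta t$ because Assumption \ref{assump-regularity-B} gives $\|\tilde{\bf B}_{\Delta t}\|_{L^2(\mathcal{X})}=O(\Delta t)$, a compensation the paper leaves implicit.
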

\begin{proof}
By using a volume-preserving numerical scheme (with a local truncation error $O(\Delta t)^2$) to compute \eqref{sub-problem-det}, we have  

\begin{align}
\mathbb{E}{\bf B}_{\Delta t} = \mathbb{E} \int_0^{\Delta t} {\bf b}(t,X_t^\omega,\omega)dt + O(\Delta t)^2=\mathbb{E} \int_0^{\Delta t} {\bf b}(\eta^0_t)dt + O(\Delta t)^2, \label{EBerror}
\end{align} 
where $\eta_t^0$ is the environment process defined in \eqref{environment1} with $\sigma = 0$. \ {Based on the regularity Assumption \ref{assump-regularity-b} for ${\bf b}$, although the constant in the local truncation error $O(\Delta t)^2$ of the numerical scheme \eqref{numerical-integrator-sub1} is random, it has a uniform upper bound. Thus, the error in Eq.\eqref{EBerror} is still of order $O(\Delta t)^2$ after taking the expectation}. Notice that when we define ${\bf B}_{\Delta t}$, we only consider the sub-problem \eqref{sub-problem-det}. Recall the fact that $S^t$ is measure-preserving, so we get

\begin{align}
\mathbb{E} \int_0^{\Delta t} {\bf b}(\eta^0_t)dt = \int_0^{\Delta t} \int_{\mathcal{X}}{\bf E}_{\chi}{\bf b}(\eta^0_t)dP_0(\chi)dt =\int_0^{\Delta t} \mathbb{E}S^t {\bf b}dt =\int_0^{\Delta t} \mathbb{E}{\bf b}dt = 0,
\end{align}
where we have used the definition of $S^t$ in \eqref{semigroup-St} and ${\bf b}$ is mean-zero. 
Therefore, $\mathbb{E} {\bf B}_{\Delta t}$ is of the order $(\Delta t)^2$. Moreover, from the numerical scheme \eqref{one-step} we have 

\begin{align}
\mathbb{E}{\bf X}^\omega_{n} = \mathbb{E}{\bf X}^\omega_{n-1} + \ {\mathbb{E}{\bf B}_{\Delta t}^{\omega((n-1)\Delta t)} ({\bf X}^\omega_{n-1})} = \mathbb{E}{\bf X}^\omega_0 + \sum_{i=0}^{n-1}\mathbb{E}S_i{\bf B}_{\Delta t} = \mathbb{E}{\bf X}^\omega_0 + \sum_{i=0}^{n-1}\mathbb{E}S_i\tilde {\bf B}_{\Delta t} + n\bar {\bf B}_{\Delta t}. 
\end{align}
\ {Under the Assumptions \ref{assump-regularity-b}, \ref{assump-stationary-B} and \ref{assump-regularity-B}, we know that $\bar {\bf B}_{\Delta t}$ and $\tilde {\bf B}_{\Delta t}$ are bounded}. According to \eqref{eq:exponentialdecay} in Theorem \ref{thm:exponentialdecay}, $||S_i\tilde {\bf B}_{\Delta t}||_{L^2(\mathcal{X})}$ decays exponentially with respect to $i$, so we can easily verify that $\sum_{i=0}^{n-1} S_i\tilde {\bf B}_{\Delta t}$ is bounded in $L^2(\mathcal{X})$, which implies $\big|\sum_{i=0}^{n-1}\mathbb{E}S_i\tilde {\bf B}_{\Delta t}\big| < \infty$. Thus, we prove that 
$\mathbb{E}{\bf X}^\omega_{n}-n\bar {\bf B}_{\Delta t}$ is bounded. 
\end{proof}
\subsection{A discrete-type corrector problem}\label{sec:discretetypecorrector}
\noindent 
The corrector problem \eqref{cell-problem-c} plays an important role in defining the effective diffusivity for the random flow. To study the property of the numerical solutions, we will define a discrete-type corrector problem and study the property of its solution.  
\begin{theorem}\label{dcp} 
Let us define ${\bm \psi}_{\Delta t} = \sum_{i=0}^\infty S_i \tilde {\bf B}_{\Delta t}$. Then, 
$\bm\psi_{\Delta t}$ is the unique solution of the discrete-type 
corrector problem in $(L_0^2(\mathcal{X}))^d$ defined as follows

\begin{align}
(S_1 - I) \bm\psi_{\Delta t} =  -\tilde {\bf B}_{\Delta t}.
\label{series}
\end{align}
\end{theorem}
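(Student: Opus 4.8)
The plan is to read \eqref{series} as a Neumann-type (geometric) series problem for the bounded operator $S_1$ acting on the closed invariant subspace $(L_0^2(\mathcal{X}))^d$: existence will come from summability, the corrector identity from a telescoping cancellation, and uniqueness from strict contraction. Everything rests on two facts already in hand. First, $S_n$ is measure-preserving (Theorem \ref{measure-preserving}), so it sends $L_0^2(\mathcal{X})$ into itself and obeys the semigroup identity $S_1 S_i = S_{i+1}$ with $S_0 = I$ (this is exactly what licenses the iterated use of the decay estimate in Theorem \ref{thm:exponentialdecay}). Second, $S_n$ contracts $L_0^2(\mathcal{X})$ exponentially via \eqref{eq:exponentialdecay}.

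First I would check that the defining series lives in the correct space. Since $\bar{\bf B}_{\Delta t} = \mathbb{E}{\bf B}_{\Delta t}$, the centered field satisfies $\mathbb{E}\tilde{\bf B}_{\Delta t} = 0$, i.e. $\tilde{\bf B}_{\Delta t}\in(L_0^2(\mathcal{X}))^d$, and because each $S_i$ is measure-preserving the entire orbit $\{S_i\tilde{\bf B}_{\Delta t}\}$ remains in $L_0^2(\mathcal{X})$. Applying \eqref{eq:exponentialdecay} componentwise then gives $\|S_i\tilde{\bf B}_{\Delta t}\|_{L^2(\mathcal{X})}\le\exp(-2c_1 i\Delta t)\,\|\tilde{\bf B}_{\Delta t}\|_{L^2(\mathcal{X})}$, so the partial sums are Cauchy with the geometric bound $\|\tilde{\bf B}_{\Delta t}\|_{L^2(\mathcal{X})}/(1-\exp(-2c_1\Delta t))$. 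Completeness of the closed subspace $(L_0^2(\mathcal{X}))^d$ yields a well-defined limit $\bm\psi_{\Delta t}\in(L_0^2(\mathcal{X}))^d$.

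Next I would verify the equation. Since $S_1$ is bounded it commutes with the convergent sum, and the semigroup property reindexes it: $S_1\bm\psi_{\Delta t} = \sum_{i=0}^\infty S_{i+1}\tilde{\bf B}_{\Delta t} = \sum_{j=1}^\infty S_j\tilde{\bf B}_{\Delta t}$. Subtracting $\bm\psi_{\Delta t} = \sum_{i=0}^\infty S_i\tilde{\bf B}_{\Delta t}$ telescopes everything except the $i=0$ term, leaving $(S_1-I)\bm\psi_{\Delta t} = -S_0\tilde{\bf B}_{\Delta t} = -\tilde{\bf B}_{\Delta t}$, which is precisely \eqref{series}. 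For uniqueness I would suppose $\bm\phi\in(L_0^2(\mathcal{X}))^d$ is another solution and set $\bm u = \bm\psi_{\Delta t}-\bm\phi$; then $(S_1-I)\bm u = 0$, so $\bm u$ is a fixed point of $S_1$ and hence of $S_n = S_1^n$ for every $n$. Invoking \eqref{eq:exponentialdecay} once more gives $\|\bm u\|_{L^2(\mathcal{X})} = \|S_n\bm u\|_{L^2(\mathcal{X})}\le\exp(-2c_1 n\Delta t)\,\|\bm u\|_{L^2(\mathcal{X})}\to 0$, forcing $\bm u = 0$.

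The only step that needs genuine care, and the one I would flag as the main obstacle, is confirming that the contraction estimate is applicable term by term. The bound \eqref{eq:exponentialdecay} holds only on $L_0^2(\mathcal{X})$ and fails on constants, so the argument hinges on the fact that centering really places $\tilde{\bf B}_{\Delta t}$ \emph{and its whole $S_i$-orbit} inside $L_0^2(\mathcal{X})$ — which is where the measure-preserving property of Theorem \ref{measure-preserving} does the essential work. Once that is secured, existence of the series, the corrector identity, and uniqueness all follow from the single exponential-decay bound.
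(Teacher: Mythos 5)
Your proof is correct and follows essentially the same route as the paper's: the telescoping/reindexing identity $S_1\bm\psi_{\Delta t}=\sum_{i\ge 1}S_i\tilde{\bf B}_{\Delta t}$ gives the corrector equation, and uniqueness follows from the strict contraction of $S_1$ on $(L_0^2(\mathcal{X}))^d$ exactly as in the paper (your $n$-fold iteration is equivalent to the paper's single application of \eqref{eq:exponentialdecay}). The only difference is that you spell out the geometric summability of the series and the invariance of $L_0^2(\mathcal{X})$ under $S_i$, points the paper leaves implicit, which is a welcome but not substantively different addition.
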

\begin{proof}
The formulation of ${\bm \psi}_{\Delta t}$ solves the discrete-type corrector problem \eqref{series} can be easily verified through simple calculations, i.e.,   

\begin{align}
(S_1 - I) \bm\psi_{\Delta t} = \sum_{i=1}^\infty S_i \tilde {\bf B}_{\Delta t} - \sum_{i=0}^\infty S_i \tilde {\bf B}_{\Delta t} = -\tilde {\bf B}_{\Delta t}.
\end{align}
The property ${\bf E} {\bm \psi}_{\Delta t} = 0$ is a straightforward result from the formulation of ${\bm \psi}_{\Delta t}$. The uniqueness of the solution comes from Theorem \ref{thm:exponentialdecay}. Suppose the equation \eqref{series} has two different solutions $\bm\psi_{1}, \bm\psi_{2}\in L_0^2(\mathcal{X})$, we have that $(S_1 -I)(\bm\psi_{1}-\bm\psi_{2}) = 0$, then 

\begin{align*}
||\bm\psi_{1}-\bm\psi_{2}||_{L^2(\mathcal{X})} = ||S_1 (\bm\psi_{1}-\bm\psi_{2})||_{L^2(\mathcal{X})} \le \exp(-2c_1\Delta t)||\bm\psi_{1}-\bm\psi_{2}||_{L^2(\mathcal{X})},
\end{align*}
which implies that $\bm\psi_{1}-\bm\psi_{2} = 0$. Thus, the uniqueness of solution for Eq.\eqref{series} is proved. 
\end{proof}
\begin{remark}
The formulation of the discrete-type corrector problem \eqref{series} is equivalent to the equation

\begin{align}
\mathbb{E}\big[ \bm\psi^{\omega(i\Delta t)}_{\Delta t}( {\bf X}^\omega_{i})| \eta_{i-1}]- \bm\psi^{\omega((i-1)\Delta t)}_{\Delta t}( {\bf X}^\omega_{i-1})=-\tilde {\bf B}^{\omega((i-1)\Delta t)}_{\Delta t} ( {\bf X}^\omega_{i-1}).
\label{dcp2}
\end{align}
This can be seen by replacing $\chi$ with $\eta_{n-1}$ in the definition of $S_1$; see Eq.\eqref{discrete-Sn}.
\end{remark}
Finally, we study the regularity of the solution of the discrete-type corrector problem \eqref{series}. 
The following result is based on the regularity assumption on the velocity field ${\bf b}$. 
Since we are interested in statistical properties of the solution ${\bf X}(t)$, which only requires convergence in law, we can choose smooth realizations of the velocity field ${\bf b}$. 
\begin{theorem}\label{regularity-cellproblem} 
Suppose ${\bf b} \in (C_b^m(\mathcal{X}))^d$, then $\bm\psi_{\Delta t}$ is in $(H^{m}(\mathcal{X}))^d$.
\end{theorem}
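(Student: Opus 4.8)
The plan is to show that every spatial derivative $D^\alpha\bm\psi_{\Delta t}$ with $|\alpha|\le m$ lies in $(L^2(\mathcal{X}))^d$ by differentiating the series $\bm\psi_{\Delta t}=\sum_{i=0}^\infty S_i\tilde{\bf B}_{\Delta t}$ from \eqref{series} term by term and summing. By Assumption \ref{assump-regularity-B} we have $\tilde{\bf B}_{\Delta t}\in(C_b^m(\mathcal{X}))^d$ with $\|\tilde{\bf B}_{\Delta t}\|_{C_b^m(\mathcal{X})}=O(\Delta t)$, so it suffices to establish a summable bound $\sum_{i=0}^\infty\|D^\alpha S_i\tilde{\bf B}_{\Delta t}\|_{L^2(\mathcal{X})}<\infty$ for each multi-index $\alpha$ with $|\alpha|\le m$; dominated convergence then legitimizes the term-by-term differentiation and places $\bm\psi_{\Delta t}$ in $(H^m(\mathcal{X}))^d$.

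The central tool is a commutation identity between the skew-adjoint generators $D_k$ and the one-step operator $S_1$. Starting from the shift-covariance $p^1_{\tau_{\bf x}\chi}({\bf y},{\bf z})=p^1_\chi({\bf x}+{\bf y},{\bf x}+{\bf z})$ of the transition density \eqref{TransitionDensity} established in the proof of Theorem \ref{measure-preserving}, together with the invariance of the environment evolution under the shifts $\tau_{\bf x}$, I would differentiate $\chi\mapsto S_1 f(\tau_{\bf x}\chi)$ at ${\bf x}={\bf 0}$. Since the displacement inside the kernel is ${\bf x}+{\bf B}_{\Delta t}(\tau_{\bf x}\chi)+\sigma\bm\xi$, the chain rule produces $D_k S_1 f = S_1 D_k f + \sum_{j=1}^d \big(D_k({\bf B}_{\Delta t})_j\big)\,S_1 D_j f$. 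Iterating this identity gives, for a general multi-index, $D^\alpha S_1 f=\sum_{|\beta|\le|\alpha|}c_{\alpha\beta}\,S_1 D^\beta f$, where the coefficients $c_{\alpha\beta}$ are polynomials in the derivatives of ${\bf B}_{\Delta t}$ up to order $|\alpha|$, hence bounded in $C_b^{m-|\beta|}(\mathcal{X})$; crucially, every off-diagonal coefficient carries at least one factor $D^\gamma{\bf B}_{\Delta t}$ and is therefore of size $O(\Delta t)$, while the diagonal term is exactly $S_1 D^\alpha f$.

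With this identity I would estimate $\|D^\alpha S_i\tilde{\bf B}_{\Delta t}\|_{L^2(\mathcal{X})}$ by induction on $i$, writing $S_i=S_1^i$ and commuting the derivatives through the composition one factor at a time. The key point is that every spatial derivative $D_k g$ belongs to $L_0^2(\mathcal{X})$ — integrating against the invariant measure and using $D_k{\bf 1}=0$ gives $\mathbf{E}_0[D_k g]=0$ — so the exponential decay $\|S_1 h\|_{L^2(\mathcal{X})}\le e^{-2c_1\Delta t}\|h\|_{L^2(\mathcal{X})}$ of Theorem \ref{thm:exponentialdecay} applies to the differentiated iterates at each step. Combining the per-step contraction factor $e^{-2c_1\Delta t}$ with the $O(\Delta t)$ size of the commutator coefficients, I would show that $\|D^\alpha S_i\tilde{\bf B}_{\Delta t}\|_{L^2(\mathcal{X})}$ decays geometrically in $i$, so the series converges in $(H^m(\mathcal{X}))^d$. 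An equivalent way to package the same estimate is to test the differentiated corrector equation $(S_1-I)D^\alpha\bm\psi_{\Delta t}=-D^\alpha\tilde{\bf B}_{\Delta t}-\sum_{\text{l.o.t.}}$ against $D^\alpha\bm\psi_{\Delta t}$ and close an energy inequality order by order.

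The main obstacle is controlling the accumulation of commutator terms along the $i$-fold composition $S_1^i$: each of the $O(i)$ factors offers a chance to trade a clean $S_1 D^\beta$ for an off-diagonal term, and a naive triangle-inequality bound inflates the iterate by $(1+O(\Delta t))^i$, which would compete with the contraction $e^{-2c_1\Delta t\, i}$ unless the two rates are balanced correctly. Resolving this requires exploiting simultaneously that the off-diagonal weights are genuinely $O(\Delta t)$ and that the Brownian increment $\sigma\bm\xi_n$ inside $S_1$ acts as a Gaussian convolution in the spatial variables, supplying coercivity in the $D_k$-directions that absorbs the derivative-losing commutator terms. Making this balance quantitative, so that the geometric decay survives for all $\Delta t$ small enough and uniformly in the order $m$, is the technical heart of the argument; once it is in place the summation and the conclusion $\bm\psi_{\Delta t}\in(H^m(\mathcal{X}))^d$ follow at once.
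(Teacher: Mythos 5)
Your commutation identity $D_k S_1 f = S_1 D_k f + \sum_{j}\bigl(D_k({\bf B}_{\Delta t})_j\bigr)S_1 D_j f$ is correct (it is exactly the identity the paper derives), and so is the observation that derivatives lie in $L_0^2(\mathcal{X})$ so that Theorem \ref{thm:exponentialdecay} applies to them. But the core mechanism of your argument --- geometric decay of $\|D^\alpha S_i\tilde{\bf B}_{\Delta t}\|_{L^2(\mathcal{X})}$ obtained by commuting $D^\alpha$ through all $i$ factors of $S_1$ --- has a genuine gap, which you yourself flag and then defer. Commuting step by step, the best per-step bound you can extract is $e^{-2c_1\Delta t}\bigl(1 + dK\|{\bf b}\|_{C_b^m(\mathcal{X})}\Delta t\bigr)$: the off-diagonal commutator coefficients are indeed $O(\Delta t)$, but the constant in that $O(\Delta t)$ is proportional to the spatial regularity data $K\|{\bf b}\|_{C_b^m(\mathcal{X})}$ of Assumption \ref{assump-regularity-B}, while $c_1$ is the temporal spectral gap of \eqref{TimeRelaxation}. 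These two constants are unrelated: nothing in the paper's assumptions gives $2c_1 > dK\|{\bf b}\|_{C_b^m(\mathcal{X})}$, so the per-step factor can exceed $1$, the bound inflates like $(1+O(\Delta t))^i$ faster than the contraction kills it, and summability of the differentiated series fails under your estimate. The ``coercivity of the Gaussian convolution'' that you invoke to absorb the commutator terms is precisely the missing ingredient; you never make it concrete and explicitly leave it as ``the technical heart,'' so the proof is incomplete at exactly the point where it matters.

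The paper avoids any such balance by proving a one-step smoothing property instead: writing $S_1 f(\tau_{\bf x}\chi)=\int p^1_\chi({\bf x},{\bf y})P^{\Delta t}f(\tau_{\bf y}\chi)\,d{\bf y}$, the ${\bf x}$-derivative falls entirely on the Gaussian transition kernel \eqref{TransitionDensity} (see \eqref{Dp1}), which gives $S_1\colon L^2(\mathcal{X})\to H^1(\mathcal{X})$ bounded, with a norm of order $(1+\|{\bf D}{\bf B}_{\Delta t}\|_{L^\infty})/(\sigma\sqrt{\Delta t})$ --- large as $\Delta t\to 0$, but harmless for fixed $\Delta t$. No differentiation of the infinite series is then needed: from the fixed-point form $\bm\psi_{\Delta t}=S_1\bm\psi_{\Delta t}+\tilde{\bf B}_{\Delta t}$ and $\bm\psi_{\Delta t}\in (L_0^2(\mathcal{X}))^d$ (Theorem \ref{dcp}), smoothing gives $\bm\psi_{\Delta t}\in (H^1(\mathcal{X}))^d$ at once; the commutation identity then turns this into a fixed-point equation for ${\bf D}\bm\psi_{\Delta t}$, to which the same smoothing applies, and the bootstrap climbs one derivative at a time up to $H^m$. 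If you prefer to salvage your series approach, the repair uses the same ingredient differently: do not commute through all $i$ factors --- write $S_i\tilde{\bf B}_{\Delta t}=S_1\bigl(S_{i-1}\tilde{\bf B}_{\Delta t}\bigr)$, differentiate only the outermost $S_1$ through the kernel, and bound the inner function in $L^2$ by $e^{-2c_1(i-1)\Delta t}\|\tilde{\bf B}_{\Delta t}\|_{L^2(\mathcal{X})}$; this yields geometric decay of all derivatives up to order $m$ with no constraint relating $c_1$ to $\|{\bf b}\|_{C_b^m(\mathcal{X})}$.
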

\begin{proof} 
First we prove that, under the assumption ${\bf b} \in (C_b^m(\mathcal{X}))^d$ for $m \ge 1$,
we have that for any $f \in L^2(\mathcal{X})$, $S_1 f\in H^1(\mathcal{X})$. Since

\begin{align}
S_1 f(\tau_{\bf x}\chi) &= \int_{R^d} p_{\tau_{\bf x}\chi}^1({\bf 0,y}) P^{\Delta t}f(\tau_{\bf x+y}\chi)d{\bf y} = \int_{R^d}p_{\chi}^1({\bf x,x+y})P^{\Delta t}f(\tau_{\bf x+y}\chi)d{\bf y}, 
\nonumber  \\
&= \int_{R^d}p_{\chi}^1({\bf x,y})P^{\Delta t}f(\tau_{\bf y}\chi)d\bf y,
\label{S1f}
\end{align}
where $p_{\chi}^1({\bf x,y})$ is the transition probability density defined in \eqref{TransitionDensity}. 
Notice that 

\begin{align}
{\bf D}_{\bf x} p^1_\chi({\bf x,y}) =\big(I + {\bf D}{\bf B}_{\Delta t}^\chi({\bf x})\big)\big({\bf y-x-B}_{\Delta t}^\chi({\bf x})\big)p^1_\chi({\bf x,y})/\ {(\sigma^2 \Delta t)},
\label{Dp1}
\end{align}
and ${\bf B}_{\Delta t} \in (C_b^m(\mathcal{X}))^d$, we can obtain that $\int_{R^d}({\bf y-x-B}^\chi_{\Delta t}({\bf x}))_i^2 p^1_\chi({\bf x,y})d{\bf x}$ is uniformly bounded for almost all $\chi$. Here the indicator $i$ represents the $i-th$ component. This concludes that

\begin{align}
\int_{R^d} {\bf D}_{\bf x}p_\chi^1({\bf x,y}) P^{\Delta t}f(\tau_{\bf y} \chi)d{\bf y} \in (L^2(\mathcal{X}))^d.
\label{Dp-in-L2}
\end{align}
The statement \eqref{Dp-in-L2} implies that ${\bf D} S_1 f \in (L^2(\mathcal{X}))^d$ by the dominant convergence theorem. Thus $S_1 f \in H^1(\mathcal{X})$. According to the definition of the discrete-type corrector problem \eqref{series}, ${\bm\psi}_{\Delta t}$ satisfies 

\begin{align}
{\bm\psi}_{\Delta t} = S_1 {\bm \psi}_{\Delta t} + \tilde {\bf B}_{\Delta t}.
\end{align}
Therefore, we obtain that $\bm\psi_{\Delta t} \in  (H^1(\mathcal{X}))^d$. Moreover, noticing that

\begin{align}
{\bf D}S_1 f(\chi) &= \int_{R^d} {\bf D}_{\bf x}p_\chi^1({\bf 0,y}) P^{\Delta t}f(\tau_{\bf y} \chi)d\bf y, \nonumber\\
&= \int_{R^d}\big(I + {\bf D}{\bf B}_{\Delta t}^\chi(0)\big)\big({\bf y-0-B}_{\Delta t}^\chi(0)\big)p^1_\chi({\bf x,y})P^{\Delta t}f(\tau_y \chi)d{\bf y}/(\sigma^2\Delta t)\nonumber\\
&=\big(I + {\bf D}{\bf B}_{\Delta t}^\chi(0)\big)\int_{R^d}-{\bf D}_{\bf y}p^1_\chi({\bf 0,y})P^{\Delta t}f(\tau_{\bf y} \chi)d{\bf y}/(\sigma^2\Delta t)\nonumber\\
&=\big(I + {\bf D}{\bf B}_{\Delta t}^\chi(0)\big)\int_{R^d}p^1_\chi({\bf 0,y}){\bf D}_{\bf y}P^{\Delta t}f(\tau_{\bf y} \chi)d{\bf y}/(\sigma^2\Delta t)\nonumber\\
&=(\sigma^2\Delta t)^{-1}\big(I + {\bf D}{\bf B}_{\Delta t}^\chi(0)\big)S_1{\bf D}f(\chi).
\end{align}
We arrive that 

\begin{align}
{\bf D}\bm\psi_{\Delta t} = \ {(\sigma^2\Delta t)^{-1}}(I + {\bf D}{\bf B}_{\Delta t})S_1 {\bf D}\bm\psi_{\Delta t} + {\bf D}\tilde {\bf B}_{\Delta t}.
\end{align}
Similar argument shows that ${\bf D}\bm\psi_{\Delta t} \in (H^1(\mathcal{X}))^{d\times d}$. Doing this argument recursively, we prove that $\bm\psi_{\Delta t}$ is in $(H^{m}(\mathcal{X}))^d$.
\end{proof}

\section{Convergence analysis}\label{sec:ConvergenceAnalysis}
\noindent 
In this section, we shall prove the convergence rate of our stochastic structure-preserving scheme in computing effective diffusivity. The convergence analysis is based on a probabilistic approach, which allows us to get rid of the exponential growth factor in the error estimate. 

\subsection{Convergence of the discrete-type corrector problem to the continuous one}\label{sec:Converge-discretocontinuous}
\noindent
We first show that, if $\Delta t$ is small enough, $S^{\Delta t}$ will converge to $S_1$. Moreover, 
the following statement holds. 
\begin{lemma}\label{Sconvergence} 
If $f$ is a globally Lipschitz function with respect to ${\bf x}$, then we have 

\ {
\begin{align}
||S_n f - S^{n\Delta t} f||_{L^2(\mathcal{X})} \le c_2 L \Delta t, 
\label{EulerMethodStrongError0}
\end{align}
}
where $L$ is the Lipschitz constant for $f$ and $c_2$ depends only on the computational time $ n\Delta t$.
\end{lemma}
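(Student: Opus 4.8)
The plan is to couple the numerically generated environment process with the exact one through the \emph{same} realization of the velocity field $\omega$ and the \emph{same} Brownian path ${\bf w}_t$, and then reduce the claim to a mean-square strong-error estimate for the scheme \eqref{one-step}. Writing $f^\chi({\bf x})=f(\tau_{\bf x}\chi)$, the global Lipschitz hypothesis means $|f(\tau_{{\bf x}}\chi)-f(\tau_{{\bf y}}\chi)|\le L|{\bf x}-{\bf y}|$. Since $S_nf(\chi)={\bf M}{\bf E}_\chi f(\tau_{{\bf X}_n^\omega}\omega(n\Delta t))$ and $S^{n\Delta t}f(\chi)={\bf M}{\bf E}_\chi f(\tau_{{\bf X}^\omega_{n\Delta t}}\omega(n\Delta t))$ use the identical environment and Brownian increments under this coupling, I would bound $|S_nf(\chi)-S^{n\Delta t}f(\chi)|\le L\,{\bf M}{\bf E}_\chi|{\bf X}_n^\omega-{\bf X}^\omega_{n\Delta t}|$. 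Squaring, applying Jensen's inequality, integrating against $P_0$ and recalling $\mathbb{E}={\bf M}{\bf E}$ then yields $\|S_nf-S^{n\Delta t}f\|_{L^2(\mathcal{X})}^2\le L^2\,\mathbb{E}|{\bf X}_n^\omega-{\bf X}^\omega_{n\Delta t}|^2$, so it suffices to prove the mean-square strong error $(\mathbb{E}|{\bf X}_n^\omega-{\bf X}^\omega_{n\Delta t}|^2)^{1/2}\le c_2\Delta t$ over the fixed horizon $T=n\Delta t$.

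The central step is the one-step (local) error analysis. Assuming the two solutions agree at step $n$, the Brownian terms cancel exactly because \eqref{one-step} advances the diffusive sub-problem with the same increment $\sigma\bm{\xi}_n=\sigma({\bf w}_{(n+1)\Delta t}-{\bf w}_{n\Delta t})$, leaving the one-step error ${\bf e}_{n+1}={\bf X}^\omega_{(n+1)\Delta t}-{\bf X}^\omega_{n+1}=\int_{n\Delta t}^{(n+1)\Delta t}{\bf b}(t,{\bf X}_t^\omega,\omega)\,dt-{\bf B}_{\Delta t}^{\omega(n\Delta t)}({\bf X}_n^\omega)$. I would split this into (i) the deterministic integrator truncation error, which is $O(\Delta t)^2$ by the estimate \eqref{LocalTruncationError2} derived under Assumption \ref{assump-regularity-b}, and (ii) the splitting error $\int_{n\Delta t}^{(n+1)\Delta t}[{\bf b}(t,{\bf X}_t^\omega,\omega)-{\bf b}(t,\tilde{\bf X}_t,\omega)]\,dt$, where $\tilde{\bf X}_t$ is the drift-only flow started from ${\bf X}_n^\omega$. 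Since ${\bf X}_t^\omega-\tilde{\bf X}_t=\sigma({\bf w}_t-{\bf w}_{n\Delta t})+\int_{n\Delta t}^t[{\bf b}(s,{\bf X}_s^\omega,\omega)-{\bf b}(s,\tilde{\bf X}_s,\omega)]\,ds$, a pathwise Gronwall estimate together with the boundedness of ${\bf b}$ and ${\bf D}_{\bf x}{\bf b}$ (Assumption \ref{assump-regularity-b}) gives $\mathbb{E}|{\bf X}_t^\omega-\tilde{\bf X}_t|^2=O(\Delta t)$ on this interval. Bounding the splitting term by the Lipschitz constant of ${\bf b}$ then yields a \emph{conditional mean-square} local error with $(\mathbb{E}[|{\bf e}_{n+1}|^2\mid\mathcal{F}_n])^{1/2}=O(\Delta t)^{3/2}$, while the vanishing of $\mathbb{E}[{\bf w}_t-{\bf w}_{n\Delta t}]$ upgrades the \emph{conditional mean} to $|\mathbb{E}[{\bf e}_{n+1}\mid\mathcal{F}_n]|=O(\Delta t)^2$, where $\mathcal{F}_n$ denotes the information up to step $n$.

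With these two local estimates in hand I would invoke the standard fundamental theorem for mean-square convergence of one-step approximations (the two hypotheses being exactly the orders $p_1=2$ and $p_2=3/2$ just obtained, with $p_1\ge p_2+\tfrac12$), or equivalently a direct discrete Gronwall argument for $\mathbb{E}|{\bf X}_n^\omega-{\bf X}^\omega_{n\Delta t}|^2$ that uses the global Lipschitz continuity of ${\bf b}$ in ${\bf x}$ to control error propagation. This produces a global mean-square strong error of order $O(\Delta t)$ with a constant growing at most like $\exp(c\,n\Delta t)$, hence depending only on the horizon $T=n\Delta t$; substituting into the reduction of the first paragraph gives \eqref{EulerMethodStrongError0}. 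The main obstacle is the second step: one must keep the deterministic integrator error and the stochastic splitting error cleanly separated and verify that the leading $O(\Delta t)^{1/2}$ fluctuation of the splitting term cancels in expectation, so that both order conditions hold simultaneously, with the requisite moment bounds on the exact and numerical processes supplied by the boundedness of ${\bf b}$. When ${\bf b}$ is only H\"older-$\gamma$ in time, the integrator error in (i) degrades to $O(\Delta t)^{1+\gamma}$, which is precisely the source of the modified rate recorded in Remark \ref{ConvergenceOrderDependonLemma41}.
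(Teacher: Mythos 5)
Your proposal is correct and follows essentially the same route as the paper: couple the numerical and exact processes through the same noise, use the Lipschitz bound together with Jensen's inequality to reduce the claim to a mean-square strong error estimate, and conclude strong order one for the additive-noise scheme via the fundamental theorem of mean-square convergence (Theorem 1.1 of Milstein, which the paper cites directly). The only difference is one of detail: where the paper invokes that theorem as a black box, you explicitly verify its two local order hypotheses ($p_1=2$ for the conditional mean, $p_2=3/2$ for the conditional mean-square) for the actual splitting scheme, which fills in precisely the step the paper delegates to the citation.
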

\begin{proof}
According to the definitions of the semigroups in \eqref{semigroup-St} and \eqref{discrete-Sn}, we have that $(S_n  - S^{n\Delta t}) f(\chi) = {\bf E}_{\chi} \big(f(\eta_n)-f(\eta(n\Delta t))\big)$,
which implies 
 
\begin{align}
(S_n  - S^{n\Delta t}) f(\chi) \le L{\bf E}_{\chi}\big|{\bf X}_n^{\omega}-{\bf X}_{n\Delta t}^{\omega}\big|.
\label{EulerMethodStrongError1}
\end{align}
\ {
The error estimate for the Euler-Maruyama method has been intensively studied in the literature (see e.g. \cite{Platen:1992,milstein1994numerical}). According to Assumption \ref{assump-regularity-b}, the regularity assumption for $\bf b$ is satisfied. Thus, the strong order of accuracy of the Euler-Maruyama scheme for SDE driven by additive noise is 1, i.e., }

\ {
\begin{align}
\sqrt{{\bf E}_{\chi}|{\bf X}_n^{\omega}-{\bf X}_{n\Delta t}^{\omega}|^2} \leq c_2 \Delta t. 
\label{EulerMethodStrongError2}
\end{align} 
The proof is a simple application of Theorem 1.1 in \cite{milstein1994numerical}. 
We apply the Jensen's inequality for expectation and obtain }

\ {
\begin{align}
{\bf E}_{\chi}|{\bf X}_n^{\omega}-{\bf X}_{n\Delta t}^{\omega}|\leq \sqrt{{\bf E}_{\chi}|{\bf X}_n^{\omega}-{\bf X}_{n\Delta t}^{\omega}|^2}\leq c_2 \Delta t. 
\label{EulerMethodStrongError3}
\end{align} 
Combining the estimate results in \eqref{EulerMethodStrongError1} and \eqref{EulerMethodStrongError3}, 
we prove the assertion of Lemma \ref{Sconvergence}. }
	
\end{proof}
Then, we show that under certain conditions the discrete-type corrector problem converges to the continuous one, which facilitates the convergence analysis of our numerical method in computing the effective diffusivity for  random flows.
\begin{theorem}\label{cellconverge}
The solution $\bm\psi_{\Delta t}$ converges to the solution $\bm\psi$ of the continuous-type corrector problem defined in \eqref{solution-continue-corrector} in $L^2(\mathcal{X})$, as $\Delta t\rightarrow 0$.  
\end{theorem}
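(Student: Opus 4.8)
The plan is to exploit the explicit representations $\bm\psi_{\Delta t}=\sum_{i=0}^\infty S_i\tilde{\bf B}_{\Delta t}$ from Theorem~\ref{dcp} and $\bm\psi=\int_0^\infty S^t{\bf b}\,dt$ from \eqref{solution-continue-corrector}, and to recognize the former as a left-endpoint Riemann-sum discretization of the latter. First I would record the two facts that drive every estimate: the amplitude bound $\|\tilde{\bf B}_{\Delta t}\|_{L^2(\mathcal{X})}=O(\Delta t)$ (from Assumption~\ref{assump-regularity-B} together with $\bar{\bf B}_{\Delta t}=O(\Delta t)^2$ in Theorem~\ref{EBbound}), and the consistency estimate $\|{\bf B}_{\Delta t}-\Delta t\,{\bf b}\|_{L^\infty(\mathcal{X})}=O(\Delta t)^2$, which follows by combining the $O(\Delta t)^2$ local truncation error of the midpoint volume-preserving integrator with the left-endpoint expansion $\int_0^{\Delta t}{\bf b}\,dt=\Delta t\,{\bf b}+O(\Delta t)^2$ permitted by the bounded temporal derivative in Assumption~\ref{assump-regularity-b}. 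Since $\tilde{\bf B}_{\Delta t}$ and ${\bf b}$ are both mean-zero, they lie in $L_0^2(\mathcal{X})$, so the exponential decay bounds \eqref{eq:exponentialdecay} and \eqref{St-decay} apply to every term. The overall scheme is a three-part $\varepsilon$-argument: truncate both series at a common horizon $T$, compare the finite parts, and then choose $T$ large and $\Delta t$ small to kill all contributions.

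For the truncation I would fix $T>0$, set $N=\lfloor T/\Delta t\rfloor$, and bound the discrete tail by
\begin{equation}
\Big\|\sum_{i\ge N}S_i\tilde{\bf B}_{\Delta t}\Big\|_{L^2(\mathcal{X})}\le \|\tilde{\bf B}_{\Delta t}\|_{L^2(\mathcal{X})}\,\frac{e^{-2c_1 N\Delta t}}{1-e^{-2c_1\Delta t}}=O(1)\,e^{-2c_1 T},
\end{equation}
where the crucial cancellation is that the $O(\Delta t)$ size of $\tilde{\bf B}_{\Delta t}$ exactly absorbs the $O(1/\Delta t)$ blow-up of the geometric factor $(1-e^{-2c_1\Delta t})^{-1}$, making the bound uniform in $\Delta t$. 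The continuous tail $\|\int_T^\infty S^t{\bf b}\,dt\|_{L^2(\mathcal{X})}\le (2c_1)^{-1}e^{-2c_1 T}\|{\bf b}\|_{L^2(\mathcal{X})}$ is immediate from \eqref{St-decay}. Thus both tails are $O(e^{-2c_1 T})$ uniformly in $\Delta t$ and can be made arbitrarily small by taking $T$ large.

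With $T$ fixed, I would then estimate $\sum_{i=0}^{N-1}S_i\tilde{\bf B}_{\Delta t}-\int_0^{T}S^t{\bf b}\,dt$ by inserting the Riemann sum $\sum_{i=0}^{N-1}\Delta t\,S^{i\Delta t}{\bf b}$ as an intermediary and splitting each term as $(S_i-S^{i\Delta t})\tilde{\bf B}_{\Delta t}+S^{i\Delta t}(\tilde{\bf B}_{\Delta t}-\Delta t\,{\bf b})$. The first piece is controlled by Lemma~\ref{Sconvergence}: since $\tilde{\bf B}_{\Delta t}$ is globally Lipschitz with constant $O(\Delta t)$ and each $c_2$ depends only on $i\Delta t\le T$, every term is $O(\Delta t)^2$, so the sum over $N=O(1/\Delta t)$ terms is $O(\Delta t)$. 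The second piece uses the $L^2$-contraction of $S^{i\Delta t}$ on $L_0^2(\mathcal{X})$ and the consistency estimate to give $O(\Delta t)^2$ per term, again $O(\Delta t)$ after summation. Finally the Riemann sum converges, $\sum_{i=0}^{N-1}\Delta t\,S^{i\Delta t}{\bf b}\to\int_0^T S^t{\bf b}\,dt$ in $L^2(\mathcal{X})$, by the strong (hence uniform on $[0,T]$) continuity of $t\mapsto S^t{\bf b}$. Combining the three $\varepsilon/3$ contributions finishes the proof.

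The step I expect to be the main obstacle is making the truncation uniform in $\Delta t$: a naive term-by-term application of Lemma~\ref{Sconvergence} fails because its constant $c_2$ grows with the horizon $i\Delta t$, so one cannot sum the comparison over all $i$. The remedy, and the conceptual heart of the argument, is the quantitative balance highlighted above, namely that the geometric decay from the spectral gap, combined with the $O(\Delta t)$ amplitude of $\tilde{\bf B}_{\Delta t}$, yields a tail bound independent of $\Delta t$; this is exactly what lets one fix a finite horizon $T$ on which Lemma~\ref{Sconvergence} is usable with a bounded constant, and only afterwards let $T\to\infty$.
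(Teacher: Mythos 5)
Your proposal is correct and follows essentially the same route as the paper's proof: truncate both representations at a finite horizon using the exponential decay bounds \eqref{St-decay} and \eqref{eq:exponentialdecay}, compare the finite parts via the consistency estimate $\|\tilde{\bf B}_{\Delta t}-\Delta t\,{\bf b}\|_{L^2(\mathcal{X})}=O(\Delta t)^2$ together with Lemma \ref{Sconvergence}, and control the remaining Riemann-sum error by the strong continuity of $S^t$. The only cosmetic differences are your choice of intermediary in the triangle inequality (you route through $S^{i\Delta t}\tilde{\bf B}_{\Delta t}$ where the paper routes through $S_i({\bf b}\Delta t)$, both yielding $O(\Delta t)^2$ per term) and your explicit display of the cancellation between the $O(\Delta t)$ amplitude of $\tilde{\bf B}_{\Delta t}$ and the $O(1/\Delta t)$ geometric factor in the discrete tail, which the paper asserts without detail.
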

\begin{proof}
Using the exponential decay properties of $S^t$ and $S_n$, we first choose a truncation time $T_0$ 
and obtain the following two inequalities

\begin{align}
\big |\big |\int_{T_0-\Delta t}^\infty S^t {\bf b} dt\big |\big |_{L^2(\mathcal{X})}\leq \frac{1}{2c_1} \exp(-2c_1 T_0)  , \quad \text{and} \quad 
\big |\big |\sum_{n=[T_0/\Delta t]-1}^\infty S_n\tilde {\bf B}_{\Delta t}\big |\big |_{L^2(\mathcal{X})} \leq \frac{1}{2c_1} \exp(-2c_1 T_0), 
\end{align} 
where $c_1>0$ is defined in \eqref{St-decay}. Then, for any $\epsilon > 0$, we choose $ T_0 $ big enough such that $\frac{1}{c_1} \exp(-c_1 T_0) < \epsilon$. Next, we estimate the error between $\sum_{n=0}^{N-1} S_n\tilde {\bf B}_{\Delta t}$ and $\int_0^{N\Delta t} S^t {\bf b}dt$ for $N \le T_0/\Delta t$. We know that 

\begin{align}
\big |\big |\int_0^{N\Delta t} S^t {\bf b} dt - \sum_{n=0}^{N-1} S^{n\Delta t}{\bf b} \Delta t\big |\big |_{L^2(\mathcal{X})} \le C_1 \Delta t
\end{align}
due to the strongly continuity of $S^t$ (see Prop. \ref{Operator-St}) and

\begin{small}
\begin{align}
\big |\big |\sum_{n=0}^{N-1} S_n\tilde {\bf B}_{\Delta t} - \sum_{n=0}^{N-1} S^{n\Delta t}{\bf b} \Delta t\big |\big |_{L^2(\mathcal{X})} &\le \big |\big |\sum_{n=0}^{N-1} S_n\tilde {\bf B}_{\Delta t} - \sum_{n=0}^{N-1} S_{n}{\bf b} \Delta t\big |\big |_{L^2(\mathcal{X})} \nonumber \\ 
&+ \big |\big |\sum_{n=0}^{N-1} S_n{\bf b}\Delta t - \sum_{n=0}^{N-1} S^{n\Delta t}{\bf b} \Delta t\big |\big |_{L^2(\mathcal{X})}. \label{SBminusSntb}
\end{align}
\end{small}
We can estimate the two terms of the right hand side of the inequality \eqref{SBminusSntb} separately. 
Since the local truncation error of the numerical scheme \eqref{numerical-integrator-sub1} is at least second order, we have $\big |\big |\tilde {\bf B}_{\Delta t} - {\bf b}\Delta t\big |\big |_{L^2(\mathcal{X})}\le O(\Delta t)^2$. From Lemma \ref{Sconvergence}, we know    
$\big |\big |(S_n - S^{n\Delta t}){\bf b}\Delta t\big |\big |_{L^2(\mathcal{X})} \le \ {O(\Delta t)^{2}}$ for all $n \le N$.  This gives the estimate

\ {
\begin{align}
\big |\big |\sum_{n=0}^{N-1} S_n\tilde {\bf B}_{\Delta t} - \sum_{n=0}^{N-1} S^{n\Delta t}{\bf b} \Delta t\big |\big |_{L^2(\mathcal{X})}\le c_2N (\Delta t)^{2} \le c_2T_0 \Delta t.
\end{align}
}
Finally, we take $\Delta t \le \epsilon/(c_2T_0)$ and obtain

\begin{align}
\big |\big |\int_0^{\infty} S^t {\bf b} dt -\sum_{n=0}^{\infty} S_n\tilde {\bf B}_{\Delta t}\big |\big |_{L^2(\mathcal{X})}\le 2\epsilon+O(\epsilon^2).
\end{align}
We prove the assertion of the Theorem. 
\end{proof}
\ {\begin{remark}\label{order}
The constant $c_2$ in Lemma \ref{Sconvergence} actually exponentially depends on $T_0$, i.e., $c_2=\exp(c_3 T_0)$ with $c_3>0$.  To balance each value of $\epsilon$, we have $ \frac{1}{2c_1} \exp(-2c_1 T_0) = \exp(c_3 T_0) T_0 \Delta t$, which requires $T_0 \approx -1/(2c_1+c_3) \log \Delta t$ and $\epsilon \approx \frac{1}{c_1} \Delta t^{\frac{2c_1}{2c_1+c_3}}$.
\end{remark}}

\subsection{Convergence of the numerical method in computing effective diffusivity}\label{sec:converge-DE}
\noindent
Now we are in a position to show the main results of our paper. We prove that the effective diffusivity obtained by our numerical method converges to the exact one defined in \eqref{continus-effective-diffusivity}. 
 
\begin{theorem}\label{thm:convergence}
Let ${\bf X}^{\omega}_n$, $n=0,1,....$ be the numerical solution of  the stochastic structure-preserving scheme \eqref{one-step} and $\Delta t$ be the time step that is fixed. Let $\bar {\bf X}^{\omega}_n = {\bf X}^{\omega}_n - n \bar {\bf B}_{\Delta t} $. We have the convergence estimate of the numerical method in computing effective diffusivity as 

\begin{align}\label{est:order-minusmean}
\frac{\mathbb{E}\bar {\bf X}^{\omega}_n\otimes \bar {\bf X}^\omega_n}{n\Delta t}= \sigma^2 {\bf I}_d +2 {\bf S}\int_{\mathcal{X}} \bm \psi\otimes {\bf b} dP_0 + \rho(\Delta t)+ O(\frac{1}{\sqrt{n}\Delta t}),
\end{align}
where \ {$\rho(\Delta t)=O(\Delta t^{\frac{2c_1}{2c_1+c_3}})$} is a function satisfying $\lim_{\Delta t \to 0}\rho(\Delta t) = 0$ and is independent of the computational time $T$, and $\bf S$ represents the symmetrization operator on a matrix, i.e.,  $\bf SA=\frac{A+A^T}{2}$.
\end{theorem}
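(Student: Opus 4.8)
The plan is to represent the recentered numerical trajectory $\bar{\bf X}^\omega_n$ as a martingale plus a telescoping corrector term, and then to compute its tensor second moment using the orthogonality of martingale increments together with the discrete corrector identity. First I would iterate the scheme \eqref{one-step} to write $\bar{\bf X}^\omega_n = \sum_{i=0}^{n-1}\tilde{\bf B}_{\Delta t}(\eta_i) + \sigma\sum_{i=0}^{n-1}\bm\xi_i$, and then invoke the trajectory form \eqref{dcp2} of the discrete corrector problem, namely $\tilde{\bf B}_{\Delta t}(\eta_i) = \bm\psi_{\Delta t}(\eta_i) - \mathbb{E}[\bm\psi_{\Delta t}(\eta_{i+1})\mid\eta_i]$. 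Setting $M_{i+1} := \bm\psi_{\Delta t}(\eta_{i+1}) - \mathbb{E}[\bm\psi_{\Delta t}(\eta_{i+1})\mid\eta_i]$ and $N_{i+1} := M_{i+1} + \sigma\bm\xi_i$, the corrector contributions telescope and I obtain
\[
\bar{\bf X}^\omega_n = \bm\psi_{\Delta t}(\eta_0) - \bm\psi_{\Delta t}(\eta_n) + \sum_{i=1}^n N_i,
\]
where $\{N_i\}$ is a martingale-difference sequence for the filtration generated by the environment process and the Brownian increments.

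Next I would take the tensor second moment. Since the $N_i$ are martingale differences, all cross expectations $\mathbb{E}[N_i\otimes N_j]$ with $i\neq j$ vanish, so $\mathbb{E}[(\sum_i N_i)\otimes(\sum_i N_i)] = \sum_i\mathbb{E}[N_i\otimes N_i]$; by Theorem \ref{measure-preserving} the chain $\eta_n$ is stationary under $P_0$, so each summand equals $\mathbb{E}[N_1\otimes N_1]$ and the sum is $n\,\mathbb{E}[N_1\otimes N_1]$. The boundary term $\bm\psi_{\Delta t}(\eta_0)-\bm\psi_{\Delta t}(\eta_n)$ is bounded in $L^2$ uniformly in $n$, so its own contribution to the second moment is $O(1)$ and, by Cauchy--Schwarz against the martingale of size $O(\sqrt n)$, its cross contribution is $O(\sqrt n)$. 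Dividing by $n\Delta t$ produces exactly the advertised remainder $O(1/(\sqrt n\Delta t))$, and it is precisely this martingale orthogonality, rather than a Gronwall estimate, that avoids any exponential-in-$T$ prefactor.

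It then remains to identify $\mathbb{E}[N_1\otimes N_1]/\Delta t$. Expanding $N_1 = M_1 + \sigma\bm\xi_0$ gives three pieces: the Brownian part contributes $\sigma^2\mathbb{E}[\bm\xi_0\otimes\bm\xi_0]/\Delta t = \sigma^2{\bf I}_d$; the martingale part I evaluate by writing $\mathbb{E}[M_1\otimes M_1] = \int\bm\psi_{\Delta t}\otimes\bm\psi_{\Delta t}\,dP_0 - \int(S_1\bm\psi_{\Delta t})\otimes(S_1\bm\psi_{\Delta t})\,dP_0$ and substituting $S_1\bm\psi_{\Delta t} = \bm\psi_{\Delta t}-\tilde{\bf B}_{\Delta t}$ from Theorem \ref{dcp}, which yields $\mathbb{E}[M_1\otimes M_1] = 2{\bf S}\int\bm\psi_{\Delta t}\otimes\tilde{\bf B}_{\Delta t}\,dP_0 - \int\tilde{\bf B}_{\Delta t}\otimes\tilde{\bf B}_{\Delta t}\,dP_0$. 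Using the $O(\Delta t)^2$ local truncation error to replace $\tilde{\bf B}_{\Delta t}$ by ${\bf b}\,\Delta t$, the convergence $\bm\psi_{\Delta t}\to\bm\psi$ from Theorem \ref{cellconverge}, and the boundedness furnished by Theorem \ref{regularity-cellproblem} under Assumption \ref{assump-regularity-b}, this term converges to $2{\bf S}\int\bm\psi\otimes{\bf b}\,dP_0$ with error of the order $\rho(\Delta t)=O(\Delta t^{2c_1/(2c_1+c_3)})$ governed by Remark \ref{order}.

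The part I expect to be the main obstacle is the cross term $\sigma\,\mathbb{E}[M_1\otimes\bm\xi_0]/\Delta t$, since a naive Cauchy--Schwarz bound only gives $O(1)$ and would spoil the limit. I would control it by Taylor-expanding $\bm\psi_{\Delta t}^{\omega(\Delta t)}({\bf B}_{\Delta t}(\eta_0)+\sigma\bm\xi_0)$ in the Gaussian increment $\sigma\bm\xi_0$, which is legitimate because $\bm\psi_{\Delta t}\in(H^m(\mathcal{X}))^d$ by Theorem \ref{regularity-cellproblem}. The zeroth-order term vanishes by independence and mean-zero of $\bm\xi_0$; the first-order term carries the factor $\mathbb{E}[{\bf D}\bm\psi_{\Delta t}]$, which vanishes because the generators $D_k$ are skew-adjoint (so $\int D_k g\,dP_0 = 0$), after using Lemma \ref{lemma} to reduce the expectation to an integral against $P_0$; the second-order term vanishes by the vanishing third moments of $\bm\xi_0$. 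Hence this cross term is $O(\Delta t)^2$ and, after division by $\Delta t$, is absorbed into $\rho(\Delta t)$. Collecting the three pieces gives $\mathbb{E}[N_1\otimes N_1]/\Delta t = \sigma^2{\bf I}_d + 2{\bf S}\int\bm\psi\otimes{\bf b}\,dP_0 + \rho(\Delta t)$, which combined with the $O(1/(\sqrt n\Delta t))$ remainder yields \eqref{est:order-minusmean}.
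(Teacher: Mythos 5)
Your decomposition is the classical Kipnis--Varadhan martingale route: telescoping the discrete corrector identity \eqref{dcp2} to write $\bar {\bf X}^\omega_n = \bm\psi_{\Delta t}(\eta_0)-\bm\psi_{\Delta t}(\eta_n)+\sum_{i=1}^n N_i$, and then using orthogonality of martingale increments plus stationarity (Theorem \ref{measure-preserving}) to compute the second moment. The paper reaches the same algebra differently: it expands $\mathbb{E}\bar{\bf X}^\omega_n\otimes\bar{\bf X}^\omega_n$ recursively, substitutes \eqref{dcp2} into the drift--position cross sum, and performs an Abel summation --- which is exactly your telescoping, but without the martingale packaging. The real payoff of your organization is the boundary term: since your martingale gives $\mathbb{E}\|\sum_i N_i\|^2 = \sum_i\mathbb{E}\|N_i\|^2 = O(n\Delta t)$ for free, the bound $\frac{1}{n}\mathbb{E}\|\bar{\bf X}^\omega_n\|^2=O(1)$ is immediate, whereas the paper must establish it by an AM--GM absorbing argument (Eqns.\eqref{amgmieq}--\eqref{EXLeREX}) before it can apply Cauchy--Schwarz to $\mathbb{E}\bar{\bf X}^\omega_n\otimes\bm\psi_{\Delta t}^{\omega(n\Delta t)}({\bf X}^\omega_n)$. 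Your identification of $\mathbb{E}[M_1\otimes M_1]$ via $S_1\bm\psi_{\Delta t}=\bm\psi_{\Delta t}-\tilde{\bf B}_{\Delta t}$ coincides with the paper's key identity \eqref{1st}, and the passage to the limit uses Theorem \ref{cellconverge} and Remark \ref{order} identically.

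The one genuine gap is precisely the step you flag as the main obstacle: the cross term $\sigma\,\mathbb{E}[M_1\otimes\bm\xi_0]$. First, note that the conditional-expectation part of $M_1$ is measurable with respect to the $\sigma$-algebra generated by $\eta_0$ and hence independent of $\bm\xi_0$, so this term equals $\sigma\,\mathbb{E}[\bm\psi_{\Delta t}(\eta_1)\otimes\bm\xi_0]$. This quantity is not merely $O(\Delta t)^2$ --- it is \emph{exactly zero}, and the paper proves this in Eq.\eqref{Esigmaxipsi} with no expansion at all: condition on $\bm\xi_0={\bf y}$ (legitimate since $\bm\xi_0$ is independent of the environment and of the earlier Brownian increments), apply Lemma \ref{lemma} (Eq.\eqref{EfEfEf2}) to $f=\bm\psi_{\Delta t}\circ\tau_{\sigma{\bf y}}$ together with translation invariance of $P_0$, and conclude $\mathbb{E}\big[\bm\psi_{\Delta t}(\eta_1)\,\big|\,\bm\xi_0={\bf y}\big]=\mathbb{E}\,\bm\psi_{\Delta t}=0$ for every fixed ${\bf y}$; integrating against $p_0({\bf 0},{\bf y})\,\sigma{\bf y}$ then gives zero. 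Your Taylor expansion, by contrast, is not justified as written: it needs pointwise (sup-norm) control of the second and third derivatives of ${\bf x}\mapsto\bm\psi_{\Delta t}(\tau_{\bf x}\chi)$ with constants \emph{uniform in} $\Delta t$, in order for the un-estimated third-order remainder to yield the claimed $O(\Delta t)^2$ after tensoring with $\bm\xi_0$. Theorem \ref{regularity-cellproblem} supplies only $(H^m(\mathcal{X}))^d$ regularity for each fixed $\Delta t$, its proof produces constants that degrade like $(\sigma^2\Delta t)^{-1}$ through the recursion ${\bf D}\bm\psi_{\Delta t}=(\sigma^2\Delta t)^{-1}(I+{\bf D}{\bf B}_{\Delta t})S_1{\bf D}\bm\psi_{\Delta t}+{\bf D}\tilde{\bf B}_{\Delta t}$, and Assumption \ref{assump-regularity-b} only requires $m\ge1$ while a third-order pointwise expansion would need $m\ge3$ plus an embedding argument. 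Replace that paragraph by the conditioning/stationarity argument above and your proof is complete, with everything else intact.
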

\begin{proof}
First of all, from direct computations we obtain that

\begin{small} 
\begin{align}
&\mathbb{E} \bar {\bf X}^\omega_n \otimes \bar {\bf X}^\omega_n = \mathbb{E}\big( \bar {\bf X}^\omega_{n-1} + \tilde {\bf B}^{\omega((n-1)\Delta t)}_{\Delta t} ( {\bf X}^\omega_{n-1}) + \sigma \bm\xi_{n-1}\big) \otimes \big(\bar {\bf X}^\omega_{n-1} + \tilde {\bf B}^{\omega((n-1)\Delta t)}_{\Delta t} ( {\bf X}^\omega_{n-1}) + \sigma \bm\xi_{n-1}\big),\nonumber\\
 =& \mathbb{E} \bar {\bf X}^\omega_{n-1}\otimes \bar {\bf X}^\omega_{n-1} + \sigma^2 {\bf I}_d \Delta t + 2 {\bf S}\mathbb{E} \bar {\bf X}^\omega_{n-1} \otimes \tilde {\bf B}^{\omega((n-1)\Delta t)}_{\Delta t} ( {\bf X}^\omega_{n-1}) + \mathbb{E} \tilde {\bf B}^{\omega((n-1)\Delta t)}_{\Delta t} (\bar {\bf X}^\omega_{n-1}) \otimes \tilde {\bf B}^{\omega((n-1)\Delta t)}_{\Delta t} ( {\bf X}^\omega_{n-1}),\nonumber\\
=&\mathbb{E} \bar {\bf X}^\omega_{0}\otimes \bar {\bf X}^\omega_{0}+ \sigma^2 {\bf I}_d n\Delta t + 2 \sum_{i=1}^{n}{\bf S}\mathbb{E} \bar {\bf X}^\omega_{i-1} \otimes \tilde {\bf B}^{\omega ((i-1)\Delta t)}_{\Delta t} (\bar {\bf X}^\omega_{i-1})+ \sum_{i=1}^{n}\mathbb{E} \tilde {\bf B}^{\omega ((i-1)\Delta t)}_{\Delta t} ( {\bf X}^\omega_{i-1}) \otimes \tilde {\bf B}^{\omega ((i-1)\Delta t)}_{\Delta t} ( {\bf X}^\omega_{i-1}),
\label{EXnXn}
\end{align}
\end{small}
where we have used the facts that $\bm\xi_{n-1}$ is independent with $\bar {\bf X}^\omega_{n-1}$ and $\mathbb{E}{\bm \xi}_{n-1}\otimes{\bm \xi}_{n-1}  = \Delta t {\bf I}_d$. 

The first two terms on the right hand side of Eq.\eqref{EXnXn} are easy to handle since each entry in $\frac{\mathbb{E} \bar {\bf X}^\omega_{0}\otimes \bar {\bf X}^\omega_{0}}{n\Delta t}$ is  $O(\frac{1}{n\Delta t})$ and $\frac{\sigma^2 {\bf I}_d n\Delta t}{n\Delta t}=\sigma^2 {\bf I}_d$. For the forth term on the right hand side of Eq.\eqref{EXnXn}, using the property that $S_i$ is measure-preserving; see Theorem \ref{measure-preserving} and Assumption \ref{assump-regularity-B} , we can get 

\begin{align}
&\frac{1}{n\Delta t}\sum_{i=1}^{n}\mathbb{E}\tilde {\bf B}^{\omega((i-1)\Delta t)}_{\Delta t} ( {\bf X}^\omega_{i-1}) \otimes \tilde {\bf B}^{\omega((i-1)\Delta t)}_{\Delta t} ( {\bf X}^\omega_{i-1}) \nonumber \\
=& \frac{1}{n\Delta t}\sum_{i=1}^{n}\mathbb{E} S_{i-1}(\tilde {\bf B}_{\Delta t}\otimes \tilde {\bf B}_{\Delta t}) = \frac{1}{n\Delta t}n\mathbb{E} \tilde {\bf B}_{\Delta t}\otimes \tilde {\bf B}_{\Delta t}= O(\Delta t).
\label{1overnEBXBX}
\end{align} 
\ {We will focus on the third term on the right hand side of Eq.\eqref{EXnXn}, which corresponds to the strengthen of the convection-enhanced diffusion and is the most difficult term to deal with}. Substituting the formulation of the discrete-type corrector problem \eqref{dcp2} into it, we obtain  

\begin{align}
&\sum_{i = 1}^{n} \mathbb{E} \bar {\bf X}^\omega_{i-1} \otimes \tilde {\bf B}^{\omega((i-1)\Delta t)}_{\Delta t} ( {\bf X}^\omega_{i-1})=-\sum_{i = 1}^{n} \mathbb{E} \bar {\bf X}^\omega_{i-1} \otimes \big(\mathbb{E}[ \bm \psi^{\omega(i\Delta t)}_{\Delta t}( {\bf X}^\omega_{i})| \eta_{i-1}]- \bm \psi^{\omega((i-1)\Delta t)}_{\Delta t}( {\bf X}^\omega_{i-1})\big) \nonumber\\
=&-\sum_{i=1}^{n} \mathbb{E} \mathbb{E}\Big[\bar {\bf X}^\omega_{i-1} \otimes \big( \bm \psi^{\omega(i\Delta t)}_{\Delta t}( {\bf X}^\omega_{i})- \bm \psi^{\omega((i-1)\Delta t)}_{\Delta t}( {\bf X}^\omega_{i-1})\big)\big|\eta_{i-1}\Big]  \nonumber\\
=&-\sum_{i=1}^{n}  \mathbb{E} (\bar {\bf X}^\omega_{i-1} - \bar {\bf X}^\omega_{i})\otimes  \bm \psi^{\omega(i\Delta t)}_{\Delta t}( {\bf X}^\omega_{i}) +\mathbb{E}\bar {\bf X}^\omega_0\otimes  \bm \psi^{\omega(0)}_{\Delta t}( {\bf X}^\omega_{0}) -\mathbb{E} \bar {\bf X}^\omega_n \otimes  \bm \psi^{\omega(n\Delta t)}_{\Delta t}( {\bf X}^\omega_{n})  \nonumber\\
=&\sum_{i=1}^{n}  \mathbb{E} \big(\tilde {\bf B}^{\omega((i-1)\Delta t}_{\Delta t}( {\bf X}^\omega_{i-1}) + \sigma \bm\xi_{i-1}\big)\otimes  \bm \psi^{\omega(i\Delta t)}_{\Delta t}( {\bf X}^\omega_{i}) + \mathbb{E}\bar {\bf X}^\omega_0\otimes  \bm \psi^{\omega(0)}_{\Delta t}( {\bf X}^\omega_{0}) - \mathbb{E} \bar {\bf X}^\omega_n \otimes  \bm \psi^{\omega(n\Delta t)}_{\Delta t}( {\bf X}^\omega_{n}).
\label{sumEXBX}  
\end{align}
\ {Here from the first row to the second row, we use the fact that $\bar{\bf X}^\omega_{i-1}$ and $\bm \psi^{\omega((i-1)\Delta t)}_{\Delta t}( {\bf X}^\omega_{i-1})$ are measurable in the $\sigma$-algebra generated by $\eta_{i-1}$. From the second row to the third row, we use the property of conditional expectation and Abel's summation formula.}

Let us first estimate the summation term on the right hand side of Eq.\eqref{sumEXBX}. For each index $i$, we have

\begin{align}
&\mathbb{E} \big(\tilde {\bf B}^{\omega((i-1)\Delta t)}_{\Delta t}({\bf X}^\omega_{i-1}) + \sigma {\bm\xi}_{i-1}\big)\otimes  \bm \psi^{\omega(i\Delta t)}_{\Delta t}({\bf X}^\omega_{i}),\nonumber\\
=&\mathbb{E} \tilde {\bf B}^{\omega((i-1)\Delta t)}_{\Delta t}({\bf X}^\omega_{i-1})\otimes\bm \psi^{\omega(i\Delta t)}_{\Delta t}({\bf X}^\omega_{i}) + \mathbb{E} \sigma {\bm\xi}_{i-1}\otimes\bm \psi^{\omega(i\Delta t)}_{\Delta t}({\bf X}^\omega_{i}).
\label{EBXplusSigmaPsiX}
\end{align}
Through simple calculations, we can show that the second term of the right hand side of Eq.\eqref{EBXplusSigmaPsiX} is zero. Specifically, we have 

\begin{align}
&\mathbb{E} \sigma {\bm\xi}_{i-1}\otimes\bm \psi^{\omega(i\Delta t)}_{\Delta t}({\bf X}^\omega_{i}) \nonumber\\
=&\mathbb{E} \sigma{\bm\xi}_{i-1}\otimes\bm \psi^{\omega(i\Delta t)}_{\Delta t}\big({\bf X}^\omega_{i-1}+  {\bf B}^{\omega((i-1)\Delta t)}_{\Delta t}({\bf X}^\omega_{i-1}) + \sigma {\bm\xi}_{i-1}\big),\nonumber\\
=&\int_{\mathcal{{X}}}\int_{R^d} p_0({\bf0, y})\sigma{\bf y}\otimes {\bf M E}_\chi\bm \psi_{\Delta t}\big(\tau_{\sigma\bf y} \tau_{{\bf X}^\omega_{i-1}+{\bf B}_{\Delta t}(\eta_{i-1})}\omega(i \Delta t)\big) d{\bf y}P_0(d\chi),\nonumber\\
=&\int_{R^d} p_0({\bf 0, y})\sigma{\bf y}\otimes\int_{\mathcal{{ X}}} {\bf M E}_\chi\bm \psi_{\Delta t}\big(\tau_{\sigma\bf y} \tau_{{\bf X}^\omega_{i-1}+{\bf B}_{\Delta t}(\eta_{i-1})}\omega(i \Delta t)\big) P_0(d\chi)d{\bf y},\nonumber\\
=& \int_{R^d} p_0({\bf 0,y}){\sigma\bf y}\otimes {\bf E}  \bm \psi_{\Delta t} d{\bf y} = 0.
\label{Esigmaxipsi}
\end{align}
Here, the expectation is taken over all the randomness in the system. In the third row of Eq.\eqref{Esigmaxipsi}, ${\bf y}$ is a realization of ${\bm\xi}_{i-1}$ and $p_0({\bf 0,y})d{\bf y}$ is the measure associated with the Brownian motion, while $P_0(d\chi)$ is the measure associated with the randomness in the velocity field and initial data. The Fubini's theorem is used in the fourth row of Eq.\eqref{Esigmaxipsi} to switch the order of integration. The fifth row of \eqref{Esigmaxipsi} is derived from Lemma \ref{lemma}; see Eq.\eqref{EfEfEf2}. And ${\bf E}  \bm \psi_{\Delta t}=0$ since the solution of the discrete-type corrector problem is mean-zero; see Theorem \ref{dcp}.

\ {Then, we compute the first term on the right hand side of Eq.\eqref{EBXplusSigmaPsiX} as follows}, 

\begin{align}
&\mathbb{E} \tilde {\bf B}^{\omega((i-1)\Delta t)}_{\Delta t}({\bf X}^\omega_{i-1})\otimes\bm \psi^{\omega(i\Delta t)}_{\Delta t}({\bf X}^\omega_{i}) = \mathbb{E} \tilde {\bf B}_{\Delta t}(\eta_{i-1})\otimes\bm \psi_{\Delta t}(\eta_i)\nonumber\\
=&\mathbb{E} \mathbb{E}\big(\tilde {\bf B}_{\Delta t}(\eta_{i-1})\otimes \bm \psi_{\Delta t}(\eta_{i})\big|\eta_{i-1}\big)=\mathbb{E} \tilde {\bf B}_{\Delta t}(\eta_{i-1})\otimes \mathbb{E}\big(\bm \psi_{\Delta t}(\eta_{i})\big|\eta_{i-1}\big)\nonumber\\
=&\mathbb{E} \tilde {\bf B}_{\Delta t}(\eta_{i-1})\otimes S_1 \bm \psi_{\Delta t}(\eta_{i-1})=
\mathbb{E} \tilde {\bf B}_{\Delta t}(\eta_{i-1})\otimes  \big(\bm \psi_{\Delta t}(\eta_{i-1}) - \tilde{\bf B}_{\Delta t}(\eta_{i-1})\big)\nonumber\\
=&\mathbb{E} \tilde {\bf B}_{\Delta t}(\eta_{i-1})\otimes \bm \psi_{\Delta t}(\eta_{i-1}) - \mathbb{E} \tilde {\bf B}_{\Delta t}(\eta_{i-1})\otimes\tilde{\bf B}_{\Delta t}(\eta_{i-1})\nonumber\\
=&\mathbb{E} S_{i-1}(\tilde {\bf B}_{\Delta t}\otimes \bm \psi_{\Delta t}) - \mathbb{E} S_{i-1}(\tilde {\bf B}_{\Delta t}\otimes \tilde{\bf B}_{\Delta t}).
\end{align}
Using the property that each $S_{i-1}$ is measure-preserving; see Theorem \ref{measure-preserving}, we have

	\begin{align}\label{1st}
	\frac{1}{n}\sum_{i=1}^{n} \mathbb{E} \tilde {\bf B}^{\omega((i-1)\Delta t)}_{\Delta t}({\bf X}^\omega_{i-1})\otimes\bm \psi^{\omega(i\Delta t)}_{\Delta t}({\bf X}^\omega_{i}) =  \mathbb{E} \tilde {\bf B}_{\Delta t}\otimes \bm \psi_{\Delta t} -  \mathbb{E} \tilde {\bf B}_{\Delta t}\otimes \tilde {\bf B}_{\Delta t}.
	\end{align}
The term $ \mathbb{E} \tilde {\bf B}_{\Delta t}\otimes \bm \psi_{\Delta t}$ in Eq.\eqref{1st} is corresponding to the strengthen of the convection-enhanced diffusion. The term $\mathbb{E} \tilde {\bf B}_{\Delta t}\otimes \tilde {\bf B}_{\Delta t}$ in Eq.\eqref{1st} is of the order $O(\Delta t)^2$ due to Assumption \ref{assump-regularity-B} . \ {This completes the estimate of the first term 
on the right hand side of Eq.\eqref{sumEXBX}}.

\ {Now, we estimate the second term and third term on the right hand side of Eq.\eqref{sumEXBX}}. The term $\mathbb{E}\bar {\bf X}^\omega_0\otimes  \bm \psi^{\omega(0)}_{\Delta t}( {\bf X}^\omega_{0})$ does not depend on $n$ and is bounded. For the third term, we want to prove that 
\begin{equation} \label{3rdbound}
 \frac{1}{n\Delta t}\big|\big|\mathbb{E} \bar {\bf X}^\omega_n \otimes  \bm \psi^{\omega(n\Delta t)}_{\Delta t}( {\bf X}^\omega_{n})\big|\big| \le O(\frac{1}{\sqrt{n}\Delta t}),
\end{equation}
where $||\cdot||$ is a matrix norm. By using the Holder's inequality, we know that each entry of the matrix $\mathbb{E} \bar {\bf X}^\omega_n \otimes  \bm \psi^{\omega(n\Delta t)}_{\Delta t}( {\bf X}^\omega_{n})$ satisfies 
\begin{equation}
\big|\mathbb{E}(\bar {\bf X}^\omega_n)_l  \big(\bm\psi^{\omega(n\Delta t)}_{\Delta t}( {\bf X}^\omega_{n})\big)_j\big| \le \big(\mathbb{E} [(\bar {\bf X}^\omega_n)_l]^2\big)^{1/2} \big(\mathbb{E}[(\bm\psi^{\omega(n\Delta t)}_{\Delta t}( {\bf X}^\omega_{n}))_j]^2\big)^{1/2}, \quad 1\leq l,j \leq d. \label{EXnlPhiXnj}
\end{equation}
Again, using the property that $S_n$ is measure-preserving; see Theorem \ref{measure-preserving}, we have
\begin{equation}
\mathbb{E}\big[(\bm\psi^{\omega(n\Delta t)}_{\Delta t}( {\bf X}^\omega_{n}))_j\big]^2 = 
\mathbb{E} \big(\bm\psi_{\Delta t,j}(\eta_{n})\big)^2 = \mathbb{E} S_n (\bm\psi_{\Delta t,j})^2 = \mathbb{E}(\bm\psi_{\Delta t,j})^2,
\label{EPhiXnEPhi}
\end{equation}
which is bounded since $\bm\psi_{\Delta t}\in (L_0^2(\mathcal{X}))^d$ according to Theorem \ref{dcp}. Thus, if we can prove $\frac{1}{n} \mathbb{E} [(\bar {\bf X}^\omega_n)_l]^2$ is bounded, then  
\begin{equation} 
\frac{1}{n\Delta t}\big|\mathbb{E}(\bar {\bf X}^\omega_n)_l  (\bm\psi^{\omega(n\Delta t)}_{\Delta t}( {\bf X}^\omega_{n}))_j\big| \le \frac{1}{\sqrt{n}\Delta t} (\frac{1}{n}\mathbb{E} [(\bar {\bf X}^\omega_n)_l]^2)^{1/2} (\mathbb{E}[(\bm\psi^{\omega(n\Delta t)}_{\Delta t}( {\bf X}^\omega_{n}))_j]^2)^{1/2} =  O(\frac{1}{\sqrt{n}\Delta t}).
\label{EXnPhiXn}
\end{equation} 

\ {In order to prove that  $\frac{1}{n} \mathbb{E} [(\bar {\bf X}^\omega_n)_l]^2$ is bounded, we apply the AM-GM inequality on the diagonal entries of the matrix $\mathbb{E} \bar {\bf X}^\omega_n \otimes  \bm \psi^{\omega(n\Delta t)}_{\Delta t}( {\bf X}^\omega_{n})$ and obtain},

\begin{align}\label{amgmieq}
\big|\mathbb{E} (\bar {\bf X}^\omega_n)_l  (\bm\psi^{\omega(n\Delta t)}_{\Delta t}( {\bf X}^\omega_{n}))_l\big| \le \epsilon \mathbb{E} [(\bar {\bf X}^\omega_n)_l]^2 + (4\epsilon)^{-1}\mathbb{E}[(\bm\psi^{\omega(n\Delta t)}_{\Delta t}( {\bf X}^\omega_{n}))_l]^2, \quad 1\leq l \leq d,
\end{align}
where $0< \epsilon < 1$. \ {The choice of $\epsilon$ will be specified later}.

\ {According to \eqref{EXnlPhiXnj}, we only need to estimate the terms $\mathbb{E} [(\bar {\bf X}^\omega_n)_l]^2$}. We first substitute the estimated result \eqref{amgmieq} into Eq.\eqref{sumEXBX}, and then substitute the estimated results of \eqref{sumEXBX} (including Eqns.\eqref{Esigmaxipsi}\eqref{1st}) into Eq.\eqref{EXnXn}. Combining all the estimate results for terms on the right hand side of Eq.\eqref{EXnXn}, we obtain an estimate for $\mathbb{E} [(\bar {\bf X}^\omega_n)_l]^2$ as follows, 
\begin{equation}
\mathbb{E} [(\bar {\bf X}^\omega_n)_l]^2 \le (\textbf{R}_n)_l + \epsilon\mathbb{E} [(\bar {\bf X}^\omega_n)_l]^2,
\label{EXLeREX}
\end{equation}
where $(\textbf{R}_n)_l$ denotes all the remaining terms with  $(\textbf{R}_n)_l=O(n)$. \ {Notice that $(\textbf{R}_n)_l$ also contains the term $(4\epsilon)^{-1}\mathbb{E}[(\bm\psi^{\omega(n\Delta t)}_{\Delta t}( {\bf X}^\omega_{n}))_l]^2$, which is $O(1)$ due to Eq.\eqref{EPhiXnEPhi} and the choice of $\epsilon$. We choose $0< \epsilon < 1$ (e.g. $\epsilon=1/3$), move the term $\epsilon \mathbb{E} [(\bar {\bf X}^\omega_n)_l]^2$ to the left hand side of \eqref{EXLeREX}, and divide both sides of the inequality by $(1-\epsilon)n$. We can obtain that $\frac{1}{n} \mathbb{E}||\bar {\bf X}_n^\omega||^2$ is bounded}. Therefore, we prove the claim in \eqref{3rdbound}.

Finally, we combine the estimate results in Eqns.\eqref{EXnXn}\eqref{1overnEBXBX}\eqref{sumEXBX}\eqref{1st}\eqref{EXnPhiXn} and obtain that 

\begin{align}
\frac{\mathbb{E}\bar {\bf X}^{\omega}_n\otimes \bar {\bf X}^\omega_n}{n\Delta t} = \sigma^2 {\bf I}_d +2 {\bf S}\mathbb{E} \bm \psi_{\Delta t}\otimes \tilde {\bf B}_{\Delta t}/\Delta t + O(\Delta t) + O(\frac{1}{\sqrt{n}\Delta t}).
\end{align}
According to Theorem \ref{cellconverge} and Remark \ref{order}, we have the estimate 

\ {
\begin{align}
\big|\big|2 {\bf S}\mathbb{E} \bm \psi_{\Delta t}\otimes \tilde {\bf B}_{\Delta t}/\Delta t - 2{\bf S}\mathbb{E}\bm \psi\otimes {\bf b}\big|\big|_{L^2(\mathcal{X})}= O(\Delta t^{\frac{2c_1}{2c_1+c_3}}) := \rho(\Delta t),
\end{align}
}
where $\lim_{\Delta t \to 0}\rho(\Delta t) = 0$.  Thus, the statement in \eqref{est:order-minusmean} is proved. 
\end{proof}
\begin{remark} 
Theorem \ref{thm:convergence} shows that when the time step $\Delta t$ is given and fixed, we have

\begin{align}\label{limerror}
\lim_{n\to\infty}\frac{\mathbb{E}\bar{\bf X}^{\omega}_n\otimes  \bar{\bf X}^\omega_n}{n\Delta t}= \sigma^2 {\bf I}_d +2 {\bf S}\int_{\mathcal{X}} \bm \psi\otimes {\bf b} dP_0 + \rho(\Delta t),
\end{align}
which reveals the connection of the definition of the effective diffusivity by solving discrete-type and continuous-type corrector problems. Our result appears to be the first one in the literature to build this connection. 
\end{remark}
Notice that in the Theorem \ref{thm:convergence}, we assume $\bar {\bf X}^{\omega}_n = {\bf X}^{\omega}_n - n \bar {\bf B}_{\Delta t}$ are given, where we use Monte Carlo method to compute $\bar {\bf B}_{\Delta t}$. In some cases, if we cannot calculate the drift constant $\bar {\bf B}_{\Delta t}$ exactly, we can directly estimate the term $\mathbb{E} {\bf X}^{\omega}_n\otimes  {\bf X}^\omega_n$, which is summarized in the following corollary.
\begin{corollary}\label{thm:convergencewithourdrift}
Let ${\bf X}^{\omega}_n$, $n=0,1,....$ be the numerical solution of the stochastic structure-preserving scheme \eqref{one-step} and $\Delta t$ be the time step that is fixed. Suppose $n (\Delta t)^3$ and $\frac{1}{\sqrt{n}\Delta t}$ are small enough, we have

\begin{align}\label{est:order}
	\frac{\mathbb{E} {\bf X}^{\omega}_n\otimes  {\bf X}^\omega_n}{n\Delta t}= \sigma^2 {\bf I}_d+2 {\bf S}\int_{\mathcal{X}} \bm\psi\otimes {\bf b} dP_0 + \rho(\Delta t) 
	+ O(\frac{1}{\sqrt{n}\Delta t}) + O\big(n(\Delta t)^3\big),
	\end{align}
where \ {$\rho(\Delta t)=O(\Delta t^{\frac{2c_1}{2c_1+c_3}})$} is a function satisfying $\lim_{\Delta t \to 0}\rho(\Delta t) = 0$ and is independent of 
the computational time $T$, and ${\bf S}$ represents the symmetrization operator.
\end{corollary}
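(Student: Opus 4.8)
The plan is to reduce the estimate for the uncentered second moment $\mathbb{E}\,{\bf X}^\omega_n\otimes{\bf X}^\omega_n$ to the centered one already controlled in Theorem \ref{thm:convergence} by expanding about the deterministic drift $n\bar{\bf B}_{\Delta t}$. Writing ${\bf X}^\omega_n=\bar{\bf X}^\omega_n+n\bar{\bf B}_{\Delta t}$ and using that $\bar{\bf B}_{\Delta t}=\mathbb{E}{\bf B}_{\Delta t}$ is deterministic, I would expand the tensor square and take expectations to obtain
\begin{align}
\mathbb{E}\,{\bf X}^\omega_n\otimes{\bf X}^\omega_n=\mathbb{E}\,\bar{\bf X}^\omega_n\otimes\bar{\bf X}^\omega_n+2{\bf S}\big(n\bar{\bf B}_{\Delta t}\otimes\mathbb{E}\bar{\bf X}^\omega_n\big)+n^2\,\bar{\bf B}_{\Delta t}\otimes\bar{\bf B}_{\Delta t},\nonumber
\end{align}
and then divide through by $n\Delta t$. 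Here the two cross terms combine into a single symmetrized contribution because $(a\otimes b)^T=b\otimes a$.

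Next I would estimate the three resulting terms separately using Theorem \ref{EBbound}. The first term is handled directly by Theorem \ref{thm:convergence}, contributing $\sigma^2{\bf I}_d+2{\bf S}\int_{\mathcal{X}}\bm\psi\otimes{\bf b}\,dP_0+\rho(\Delta t)+O(\frac{1}{\sqrt n\Delta t})$. For the remaining two, Theorem \ref{EBbound} supplies exactly the two ingredients needed: $\bar{\bf B}_{\Delta t}=O(\Delta t^2)$, and $\mathbb{E}\bar{\bf X}^\omega_n=\mathbb{E}{\bf X}^\omega_n-n\bar{\bf B}_{\Delta t}$ is bounded uniformly in $n$, i.e. $O(1)$. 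Consequently the cross term divided by $n\Delta t$ equals $2{\bf S}\big(\bar{\bf B}_{\Delta t}\otimes\mathbb{E}\bar{\bf X}^\omega_n\big)/\Delta t=O(\Delta t)$, which is dominated by $\rho(\Delta t)=O(\Delta t^{2c_1/(2c_1+c_3)})$ (the exponent being strictly less than one) and is absorbed into it; the quadratic term divided by $n\Delta t$ equals $n\big(\bar{\bf B}_{\Delta t}\otimes\bar{\bf B}_{\Delta t}\big)/\Delta t=O(n\Delta t^3)$, which is precisely the additional error term recorded in \eqref{est:order}.

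I do not expect a genuine obstacle here: once Theorem \ref{thm:convergence} and the drift bounds of Theorem \ref{EBbound} are in hand, the corollary is an exercise in expanding the tensor square and collecting orders, the only care being to check that the cross term is of strictly lower order than the retained quadratic contribution. The one conceptually nontrivial point, and the reason for the two smallness hypotheses, is that the two residual errors constrain $n$ in opposite directions: $O(\frac{1}{\sqrt n\Delta t})$ forces $n$ to be large, while $O(n\Delta t^3)$ forces $n\Delta t^3$ to remain small. Rather than optimizing a single combined rate, I would simply keep both as separate error terms and note that any $n$ in the regime $\Delta t^{-2}\ll n\ll\Delta t^{-3}$ drives both to zero, which is what the hypotheses ``$n(\Delta t)^3$ and $\frac{1}{\sqrt n\Delta t}$ small enough'' encode.
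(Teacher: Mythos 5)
Your proposal is correct and follows essentially the same route as the paper: the paper's proof consists precisely of the decomposition $\frac{\mathbb{E}{\bf X}^{\omega}_n\otimes{\bf X}^\omega_n}{n\Delta t}=\frac{\mathbb{E}\bar{\bf X}^{\omega}_n\otimes\bar{\bf X}^\omega_n}{n\Delta t}+\frac{2{\bf S}\,\mathbb{E}\bar{\bf X}^{\omega}_n\otimes\bar{\bf B}_{\Delta t}}{\Delta t}+\frac{n^2\bar{\bf B}_{\Delta t}\otimes\bar{\bf B}_{\Delta t}}{n\Delta t}$ combined with Theorem \ref{EBbound}, which is exactly your expansion. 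You have merely spelled out the order-counting (cross term $O(\Delta t)$ absorbed into $\rho(\Delta t)$, quadratic term $O(n\Delta t^3)$) that the paper leaves as ``straightforward,'' and your remark on the opposing constraints on $n$ is a correct reading of the corollary's hypotheses.
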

\begin{proof}
Using the observation that 

\begin{align}\label{est:order2}
\frac{\mathbb{E} {\bf X}^{\omega}_n\otimes  {\bf X}^\omega_n}{n\Delta t} = \frac{\mathbb{E}\bar {\bf X}^{\omega}_n\otimes \bar {\bf X}^\omega_n}{n\Delta t} + \frac{2{\bf S}\mathbb{E}\bar {\bf X}^{\omega}_n \otimes \bar {\bf B}_{\Delta t}}{\Delta t} + \frac{n^2 \bar {\bf B}_{\Delta t}\otimes \bar {\bf B}_{\Delta t}}{n\Delta t}
\end{align}
and Theorem \ref{EBbound}, we can straightforwardly get the proof. 
\end{proof}

\begin{remark}\label{ProofBasedOnErgoditicy}
In our convergence analysis, we interpret the solution process generated by our numerical scheme as a Markov process. By exploring the ergodicity of the solution process (i.e., Markov process), we give a sharp error estimate of the proposed numerical scheme in computing effective diffusivity.  
\end{remark}

\begin{remark}\label{ConvergenceOrderDependonLemma41}
\ {
If ${\bf b}$ satisfies a H\"{o}lder-$\gamma$ continuous condition in the time domain with $0<\gamma<1$, we will obtain a weaker convergence rate in Lemma \ref{sec:Converge-discretocontinuous}, i.e. $||S_n f - S^{n\Delta t} f||_{L^2(\mathcal{X})} \le c_2 L (\Delta t)^{\gamma}$ with $0<\gamma<1$. Under such condition, we can still obtain convergence analysis of the numerical methods for computing effective diffusivity, e.g. Theorem \ref{thm:convergence} with a smaller convergence rate in $\rho(\Delta t) = O(\Delta t^{\frac{2\gamma c_1}{2c_1+c_3}})$.}
\end{remark}

\section{Numerical Results}\label{sec:numericaltest}
\noindent
The aim of this section is two-fold. First, we will verify the convergence results obtained in Section \ref{sec:converge-DE}. Second, we will use the proposed method to compute effective diffusivity in random flows, where incompressible random flows in two- and three-dimensional spaces will be studied. Without loss of generality, we compute the quantity $\frac{\mathbb{E}[( {\bf X}^{\omega}_{n,1})^2]}{2n\Delta t}$, which is used to approximate $D^{E}_{11}$ in the effective diffusivity matrix $D^{E}$. Notice that $  {\bf X}^{\omega}_{n,1}$ is the first component of the solution vector $  {\bf X}^{\omega}_{n}$. One can obtain $D^{E}_{11}$ by choosing ${\bf v}=(1,0)^{T}$ in the equation \eqref{continus-effective-diffusivity} of the Prop. \ref{continus-cellproblem}.  

\subsection{Numerical methods for generating random flows}
\noindent 
To start with, we discuss how to generate random flows that will be used in our numerical experiments. 
Assume the vector field ${\bf b}(t,{\bf x} ,\omega)$ has a spectral measure 

\begin{align}																							
\exp(-r({\bf k})|t|)\Gamma({\bf k})(\bf I - \frac{k\otimes k}{|k|^2}),
\label{spectralmeasure}
\end{align}
where ${\bf k}=(k_1,k_2)^{T}$ or ${\bf k}=(k_1,k_2,k_3)^{T}$, $r({\bf k}) > c_0$ for some positive constant $c_0$, and $\Gamma({\bf k})$ is integrable and decays fast for large $\bf k$. Under such settings, the velocity field ${\bf b}(t,{\bf x},\omega)$ satisfies the $\rho$ mixing condition and is stationary and divergence-free \cite{rosenblatt2012markov,doukhan2012mixing}. In order to mimic the energy spectrum of real flows, we assume $\Gamma({\bf k}) \propto 1/|{\bf k}|^{2\alpha+d-2}$ with ultraviolet cutoff $|{\bf k}| \le K < \infty$ and $r({\bf k}) \propto |{\bf k}|^{2\beta}$. 
The spectral gap condition \ref{TimeRelaxation} requires $\beta \le 0$ and the integrability of $\Gamma({\bf k})$ requires $\alpha < 1$. Here for simplicity, we choose $\beta = 0$. 

Given the spectral measure \eqref{spectralmeasure}, we use the randomization method \cite{kraichnan1970diffusion,Majda:99} to generate realizations of the velocity field. Specifically, we 
approximate it as

\begin{align}
{\bf b}(t,{\bf x}) = \frac{1}{\sqrt{M}}\sum_{m=1}^{M} \big[{\bf u}_m \cos({\bf k}_m\cdot {\bf x} ) + {\bf v}_m \sin ({\bf k}_m \cdot {\bf x} )\big].
\label{randomization-method-v}
\end{align}
Notice that we have suppressed the dependence of the velocity on $\omega$ for notation simplicity here. 
In fact, the parameters ${\bf k}_m$, ${\bf u}_m$ and ${\bf v}_m$ contain randomness. The spectrum points ${\bf k}_m$ were chosen independently according to the spectral measure $\Gamma(\bf k)$. Due to the isotropicity, we first generate a point uniformly distributed on the unit sphere or unit circle, which represents the direction of the ${\bf k}_m$. Then we generate the length $r$ of ${\bf k}_m$, which satisfies a density function $\rho(r) \propto 1/r^{2\alpha - 1}, 0 < r \le K$.

For the random flows in two-dimensional space, we have 

\begin{align}
{\bf u}_m = \xi_m(t)\frac{{\bf k}^\perp_m}{|{\bf k}^\perp_m|}, \quad {\bf v}_m = \eta_m(t)\frac{{\bf k}^\perp_m}{|{\bf k}^\perp_m|}, \quad {\bf k}_m=(k_m^1,k_m^2), \quad m=1,...,M,
\label{2D-setting}
\end{align}
where ${\bf k}_m^\perp = (-k_m^2,k_m^1)$, $\xi_m(t)$ and $\eta_m(t)$ are independent 1D  Ornstein-Uhlenbeck (OU) processes with covariance function 

\begin{align*}
Cov(\xi_m(t_1),\xi_m(t_2)) = Cov(\eta_m(t_1),\eta_m(t_2)) =\exp(-\theta |t_1 - t_2|).
\end{align*}
Here $\theta>0$ is a parameter to control the roughness of the OU process.  
To obtain the OU path for $\xi_m(t)$, we generate a series of $\{\xi_m(n\Delta t)\}$ satisfies 

\begin{align}
\xi_m(n\Delta t) = e^{-\theta\Delta t}\xi_m((n-1)\Delta t) + \sqrt{1-e^{-2\theta\Delta t}} \zeta_{m,n},\quad n=1,2,3,...
\label{numerical-OU-path}  
\end{align}
where $\xi_m(0)$, $\zeta_{m,n}$, $m,n=1,2,3,...$ are i.i.d. $N(0,1)$ distributed random variables. One can easily verify that $Cov(\xi_m(i\Delta t),\xi_m(j\Delta t)) = \exp(-\theta |i-j|\Delta t)$. The OU path for $\eta_m(t)$ can be generated by using the same approach. 

For the random flows in three-dimensional space, we have 

\begin{align}
{\bf u}_m = {\bm \xi}_m(t) \times \frac{{\bf k}_m}{|{\bf k}_m|}, \quad {\bf v}_m = {\bm \eta}_m(t) \times \frac{{\bf k}_m}{|{\bf k}_m|}, \quad  {\bf k}_m=(k_m^1,k_m^2,k_m^3),
\label{3D-setting}
\end{align} 
where the samples ${\bm \xi}_m(t)$ and ${\bm \eta}_m(t)$ are independent 3D random vectors, whose components are independent stationary OU process having the covariance function $Cov(\bm\xi_m(t_1),\bm\xi_m(t_2)) = Cov(\bm\eta_m(t_1),\bm\eta_m(t_2)) =\exp(-\theta |t_1 - t_2|){\bf I}_{3}$.  Each component of ${\bm \xi}_m(t)$ and ${\bm \eta}_m(t)$ can be generated by using the method 
\eqref{numerical-OU-path}. \ {One can easily verify that the random velocity fields 
generated by Eq.\eqref{randomization-method-v} with the setting \eqref{2D-setting} in two-dimensional space and \eqref{3D-setting} in three-dimensional space automatically satisfy the divergence-free condition}. 
 
\subsection{Verification of the convergence analysis}\label{sec:verifyconvergence}
\noindent 
In this subsection, we study the convergence rate of our method in computing incompressible random flow in 2D and 3D spaces. 

For the random flow in 2D space, we solve the SDE \eqref{eqn:generalSDEDefD}, where the velocity filed is chosen as \eqref{randomization-method-v} with the setting \eqref{2D-setting}. The velocity field were simulated with $M = 1000$. The parameters in the spectral measure $\Gamma(\bf k)$ are $K = 10$ and $\alpha = 0.75$. The time-mixing constant $\theta = 10$ in the covariance function.  The molecular diffusivity $\sigma = 0.1$. We use Monte Carlo method to generate independent samples for the Brownian motion ${\bf w}(t)$ and velocity field ${\bf b}(t,{\bf x})$.  The sample number is denoted by $N_{mc}$. 

We choose time step $\Delta t_{ref}=0.001$ and $N_{mc}=100,000$ to solve the SDE \eqref{eqn:generalSDEDefD} and compute the reference solution, i.e., the ``exact'' effective diffusivity, where the final computational time is $T=22$ so that the calculated effective diffusivity converges to a constant. It takes about 24 hours to compute the reference solution on a 64-core server (Gridpoint System at HKU). The reference solution for the effective diffusivity is 
$D^{E}_{11}=0.1736$.

For the random flow in 3D space, we solve the SDE \eqref{eqn:generalSDEDefD}, where the velocity field is chosen as \eqref{randomization-method-v} with the setting \eqref{3D-setting}. The velocity field were simulated with $M = 100$. The parameters in the spectral measure $\Gamma(\bf k)$ are $K = 10$ and $\alpha = 0.75$. The time-mixing constant $\theta = 10$ in the covariance function. The molecular diffusivity $\sigma = 0.1$. Again, we use Monte Carlo method to generate dependent samples for the Brownian motion ${\bf w}(t)$ and velocity field ${\bf b}(t,{\bf x})$.  

We choose $\Delta t_{ref}=0.001$ and $N_{mc}=180,000$ to solve the SDE \eqref{eqn:generalSDEDefD} and compute the reference solution, i.e., the ``exact'' effective diffusivity, where the final computational time is $T=25$ so that the calculated effective diffusivity converges to a constant. It takes about 21 hours to compute the reference solution on a 64-core server (Gridpoint System at HKU). The reference solution for the effective diffusivity is $D^{E}_{11}=0.1137$. 
We remark that in our numerical experiment, we choose $M=1000$ for 2D random flow and $M=100$ for 3D random flow so that the velocity field numerically satisfies the ergodicity assumption.  

In Fig.\ref{convergece-dt-2D}, we plot the convergence results of the 
effective diffusivity for the 2D random flow using our method (i.e., $\frac{\mathbb{E}[(  {\bf X}^{\omega}_{n,1})^2]}{2n\Delta t}$) with respective to different time-step $\Delta t$ at $T=22$, where the number of the Monte Carlo samples $N_{mc}=50,000$. In addition, we show a fitted straight line with the slope $1.17$, i.e., the convergence rate is about $O(\Delta t)^{1.17}$. Similarly, we show the convergence results of $\frac{\mathbb{E}[(  {\bf X}^{\omega}_{n,1})^2]}{2n\Delta t}$ for the 3D random flow in Fig.\ref{convergece-dt-3D} with respective to different time-step $\Delta t$ at $T=25$, where the number of the Monte Carlo samples $N_{mc}=50,000$. We also show 
a fitted straight line with the slope $0.98$, i.e., the convergence rate is about $O(\Delta t)^{0.98}$. These numerical results agree with our error analysis. 

\begin{figure}[h]
	\centering
	\begin{subfigure}{0.49\textwidth}
		\includegraphics[width = 21em]{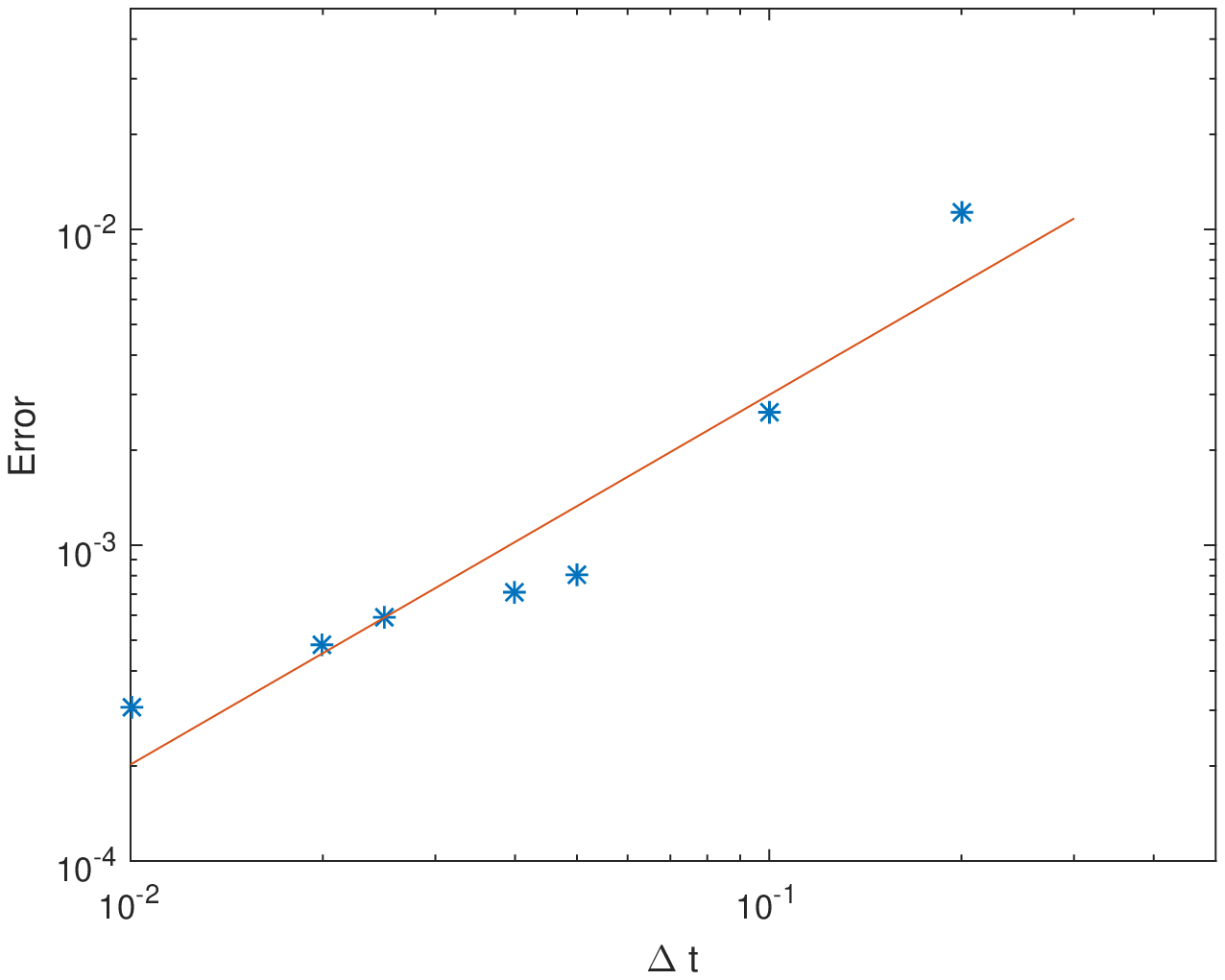}
		\caption{2D random flow, fitted slope $\approx$ 1.17}
		\label{convergece-dt-2D} 
	\end{subfigure}
	\begin{subfigure}{0.49\textwidth}
		\includegraphics[width = 21em]{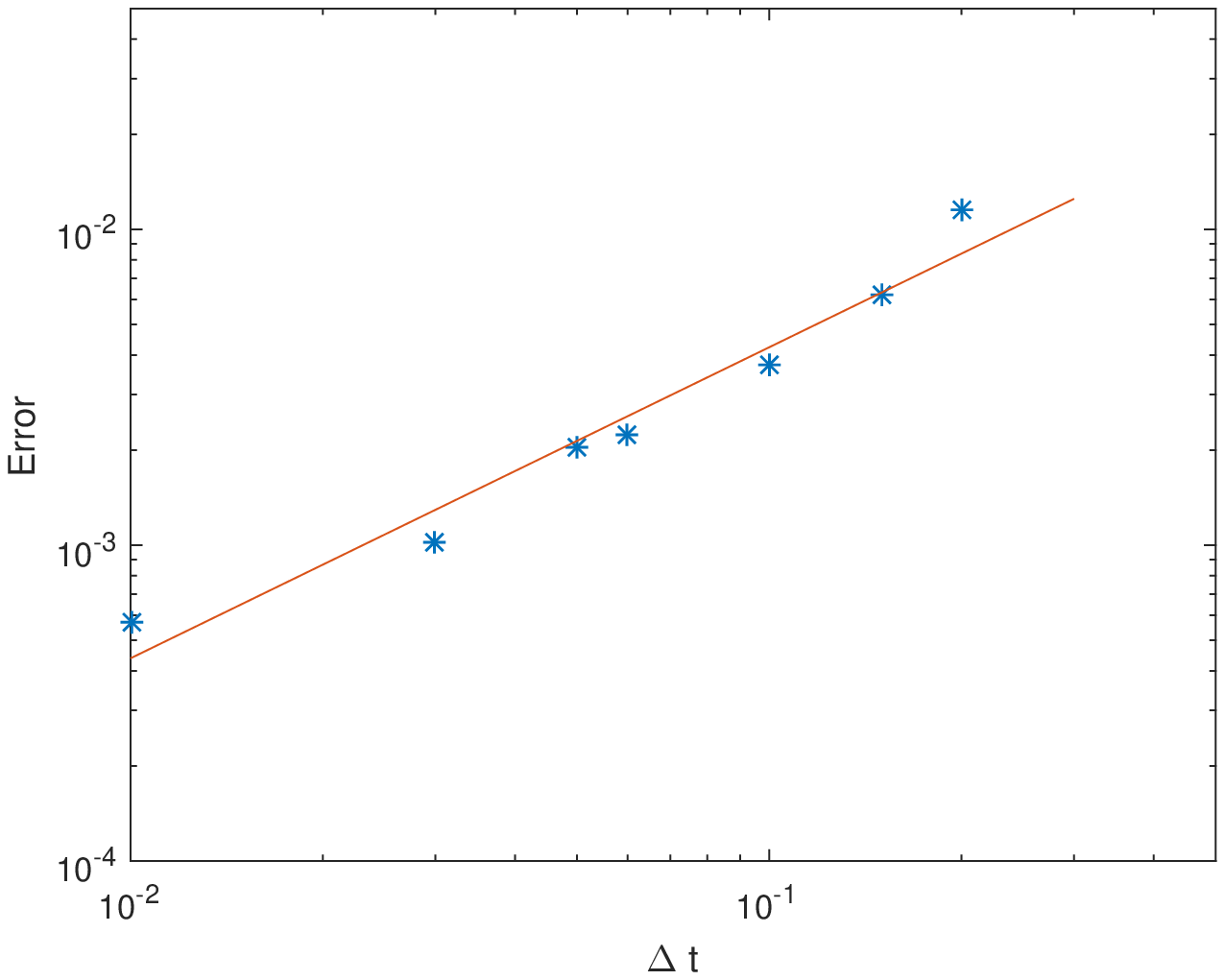}
		\caption{3D random flow, fitted slope $\approx$ 0.98.}
		\label{convergece-dt-3D}
	\end{subfigure}
	\caption{Error of $D^{E}_{11}$ for random flows with different time-steps.}
	\label{convergece-dt}
\end{figure}

\subsection{Comparasion between the volume-preserving scheme and Euler scheme}\label{sec:Compare}
\noindent\ {
To demonstrate the benefit of our method in computing effective diffusivity, we compare the performance of 
the volume-preserving scheme and Euler-Maruyama scheme (also called the Euler scheme).} 

\ {
For the random flow in 2D space, we solve the SDE \eqref{eqn:generalSDEDefD}, where the velocity filed is chosen as \eqref{randomization-method-v} with the setting \eqref{2D-setting}. The time-mixing constant $\theta = 1$ in the covariance function and other parameters are the same as that were used in Section \ref{sec:verifyconvergence}. We use the volume-preserving scheme with $\Delta t_{ref}=0.003125$ and $N_{mc}=100,000$ to solve the SDE \eqref{eqn:generalSDEDefD} and compute the reference solution, i.e., the ``exact'' effective diffusivity, where the final computational time is $T=54$ so that the calculated effective diffusivity converges to a constant. It takes about 24 hours to compute the reference solution on a 64-core server (Gridpoint System at HKU). The reference solution for the effective diffusivity is $D^{E}_{11}=0.3610$.}

\ { 
In Fig.\ref{convergececomp-dt-2D},  we plot the convergence results of the effective diffusivity for the 2D random flow using the volume-preserving scheme and the Euler scheme with respect to different time-step $\Delta t$ at $T=54$, where the number of the Monte Carlo samples $N_{mc}=50,000$. The slopes of the fitted lines for the volume-preserving scheme and the Euler scheme are $0.86$ and $0.62$, respectively. In addition, we can see that 
the volume-preserving scheme reduces the numerical error by more than one order of magnitude than that of the  Euler scheme by using the same time-step  $\Delta t$.
}

\ {
For the random flow in 3D space, we solve the SDE \eqref{eqn:generalSDEDefD}, where the velocity field is chosen as \eqref{randomization-method-v} with the setting \eqref{3D-setting}. The time-mixing constant $\theta = 4$ in the covariance function and other parameters are the same as that were used in Section \ref{sec:verifyconvergence}. We use the volume-preserving scheme with $\Delta t_{ref}=0.003125$ and $N_{mc}=100,000$ to solve the SDE \eqref{eqn:generalSDEDefD} and compute the reference solution, where the final computational time is $T=40$ so that the calculated effective diffusivity converges to a constant. It takes about 32 hours to compute the reference solution on a 64-core server (Gridpoint System at HKU). The reference solution for the effective diffusivity is $D^{E}_{11}=0.2266$.}

\ { 
In Fig.\ref{convergececomp-dt-3D},  we plot the convergence results of the effective diffusivity for the 3D random flow using the volume-preserving scheme and the Euler scheme with respect to different time-step $\Delta t$ at $T=40$, where the number of the Monte Carlo samples $N_{mc}=50,000$. The slopes of the fitted lines for the volume-preserving scheme and the Euler scheme are $0.91$ and $0.44$, respectively. Again, we can see that the volume-preserving scheme significantly reduces the numerical error by more than one order of magnitude than that of the  Euler scheme by using the same time-step  $\Delta t$.	}

\ {
We remark that the volume-preserving scheme is an implicit scheme which needs to use Newton's iteration method to solve the corresponding nonlinear equations. In our numerical experiments, we use the numerical solutions at time $t=t_n$ as an initial guess for the solution at time $t=t_{n+1}$. We find this approach is very efficient, i.e., three or four steps of iterations will give convergent results.  Thus, the computational cost for the volume-preserving scheme is about three or four times of Euler scheme in the same setting. However, the volume-preserving scheme is superior to the Euler scheme due to its faster convergence rate and smaller magnitude in the numerical error. }
	\begin{figure}[h]
		\centering
		\begin{subfigure}{0.49\textwidth}
			\includegraphics[width = 21em]{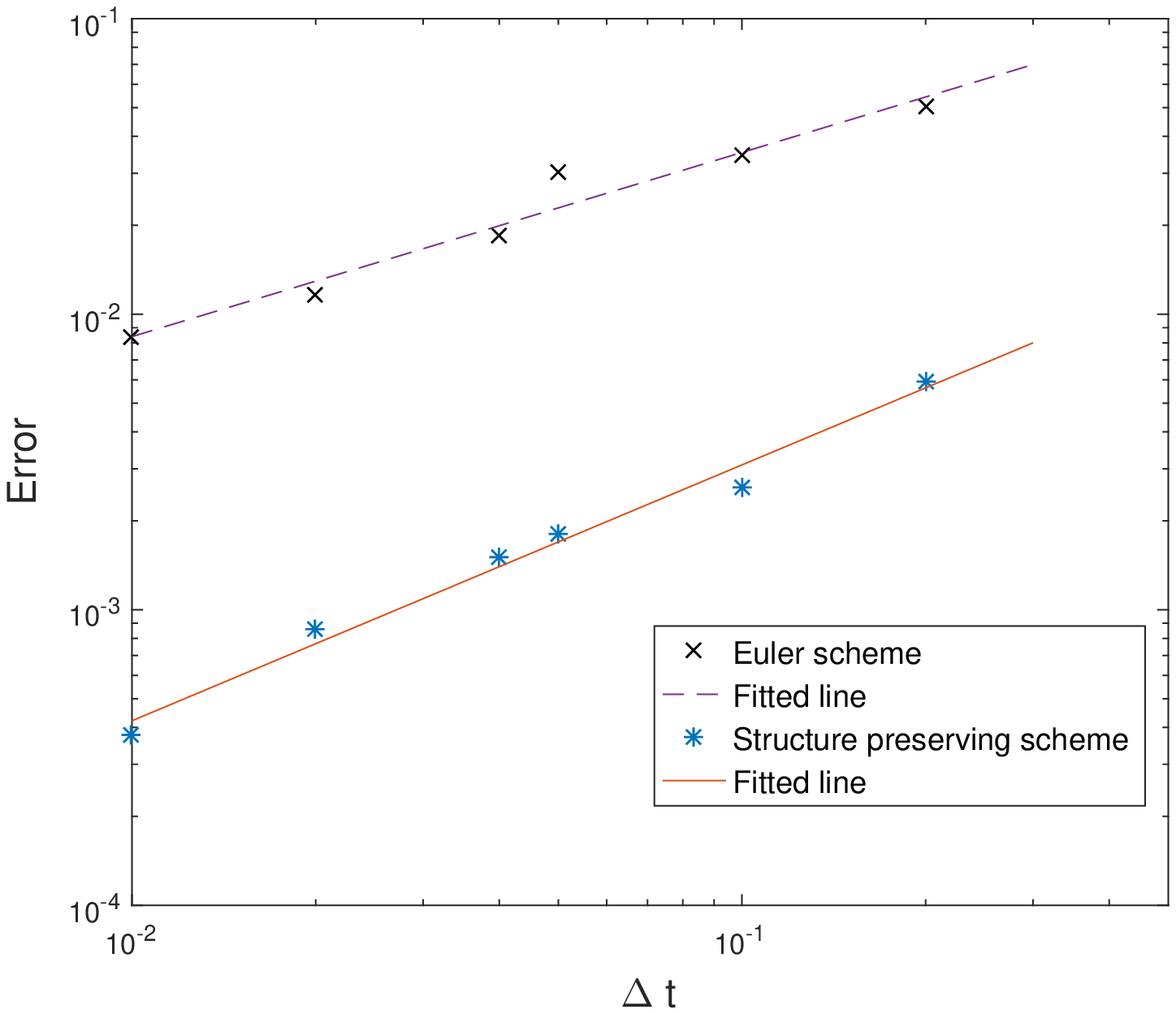}
			\caption{2D comparasion: fitted slope for Euler scheme is 0.62, fitted slope for volume preserving scheme is 0.86.}
			\label{convergececomp-dt-2D} 
		\end{subfigure}
		\begin{subfigure}{0.49\textwidth}
			\includegraphics[width = 21em]{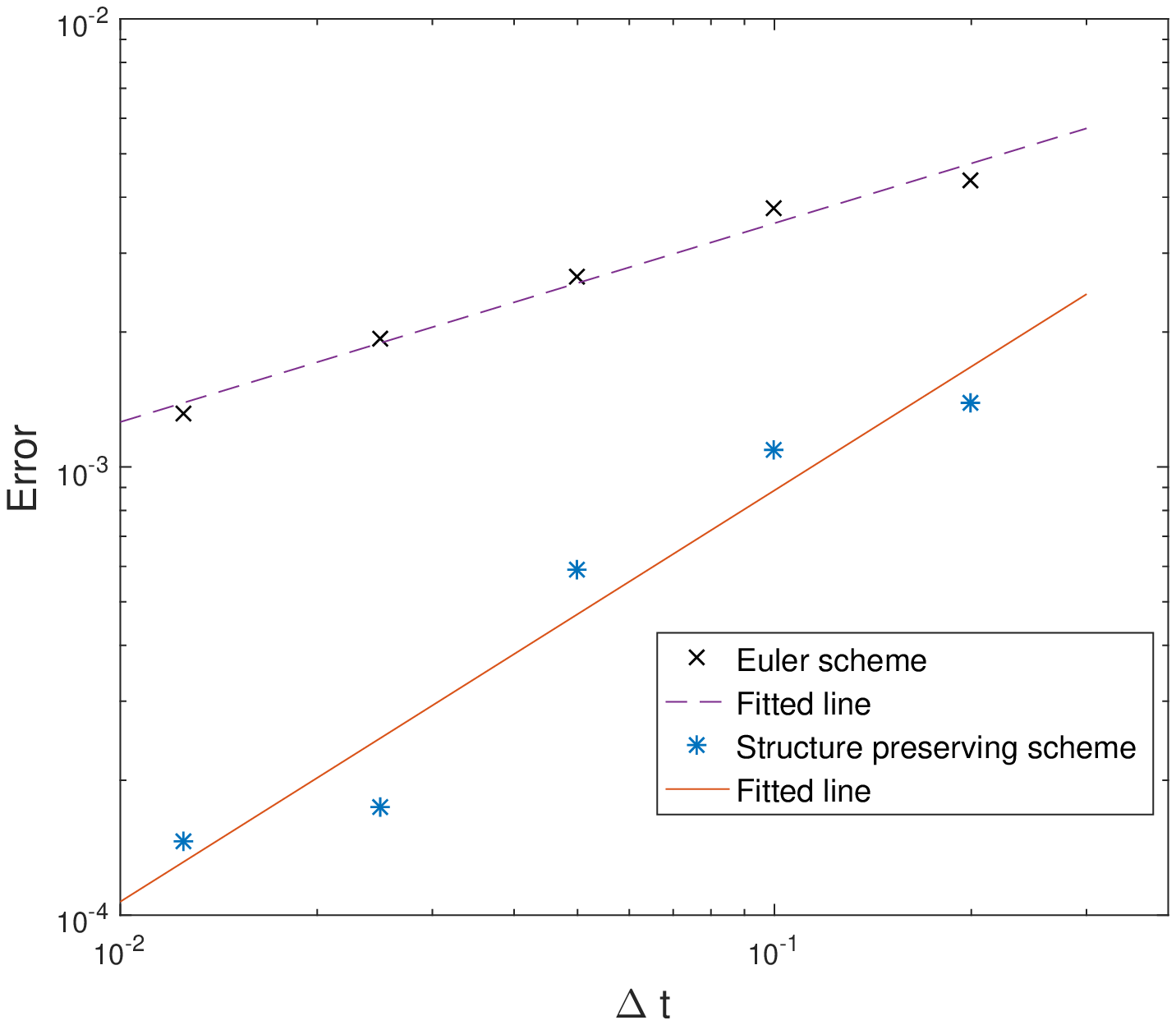}			
			\caption{3D comparasion: fitted slope for Euler scheme is 0.44, fitted slope for volume preserving scheme is 0.91.}
			\label{convergececomp-dt-3D}
		\end{subfigure}
		\caption{Comparasion between Euler scheme and volume preserving scheme}
		\label{convergececompare}
	\end{figure}

\subsection{Verification of the exponential decay property.}\label{sec:verifyspectral-gap}
\noindent 
The time relaxation property \eqref{TimeRelaxation}, which is equivalent to the exponential decay property \eqref{exponentialdecayproperty}, plays an important role in the existence of the effective diffusivity; see Prop. \ref{continus-cellproblem}. In Theorem \ref{thm:exponentialdecay}, we prove that the numerical solutions inherit  the exponential decay property. Based on this key fact, we can define the discrete-type corrector problem and prove the convergence analysis of our method. In this subsection, we will verify that the velocity field propagated by the random flow \eqref{randomization-method-v} has the exponential decay property, where both the 2D and 3D cases will be tested.  

In the experiment for 3D random flow, we choose the time step size $\Delta t = 0.05$. The velocity field 
will be approximated by $M = 100$ terms in \eqref{randomization-method-v} with the setting \eqref{3D-setting}. The parameters in the spectral measure $\Gamma(\bf k)$ are $K = 10$ and $\alpha = 0.75$. The molecular diffusivity $\sigma = 0.1$. We randomly generate 200 samples $\{{\bf k}_m^{i}, {\bm \xi}_m^{i}(0),{\bm \eta}_m^{i}(0),m=1,...,M\}$, $i=1,...,200$, which will be used to generate initial states for the velocity field \eqref{randomization-method-v}, i.e., 

\begin{align*}
{\bf b}^i(0,{\bf x}) = \frac{1}{\sqrt{M}}\sum_{m=1}^{M} \big[{\bm \xi}_m^{i}(0)\times \frac{{\bf k}^i_m}{|{\bf k}^i_m|} \cos({\bf k}_m^i\cdot {\bf x} ) + {\bm \eta}^i_m(0)\times \frac{{\bf k}^i_m}{|{\bf k}^i_m|} \sin ({\bf k}_m^i \cdot {\bf x} )\big], \quad i=1,...,200. 
\end{align*}
Then, for each initial state ${\bf b}^i(0,{\bf x})$, we generate $5000$ different samples of the OU paths ${\bm \xi}^{i,p}_m(n\Delta t)$ and ${\bm \eta}^{i,p}_m(n\Delta t)$ and
Brownian motion paths $\bm{w}^{i,p}(n\Delta t)$, $1\le p\le 5000$. Given the sample data, we calculate the corresponding solution paths $\{{\bf X}_n^{i,p}\}_{0\leq n <\infty}$ and then calculate the value 

\begin{align}
{\bf b}^{i,p}(n\Delta t,{\bf X}^{i,p}_n) &=  \frac{1}{\sqrt{M}}\sum_{m=1}^{M} [{\bm \xi}_m^{i,p}(n\Delta t)\times \frac{{\bf k}^i_m}{|{\bf k}^i_m|} \cos({\bf k}_m^i\cdot {\bf X}^{i,p}_n) + {\bm \eta}^i_m(n\Delta t)\times \frac{{\bf k}^i_m}{|{\bf k}^i_m|} \sin ({\bf k}_m^i \cdot {\bf X}^{i,p}_n )],
\nonumber \\
& i=1,...,200, \quad 1\le p\le 5000.
\end{align}
Finally, we compute $\bar {\bf b}_n^i = \frac{1}{5000}\sum_{p=1}^{5000} {\bf b}^{i,p}(n\Delta t,{\bf X}^{i,p}_n)$ and the sample variance of $\bar {\bf b}_n^i$ with respect to $i$. This is an approximation to the value $||S_n {\bf b}||_{L^2(\mathcal{X})}$, which should satisfies exponential-decay property according to our analysis. The experiment for 
2D random flow is almost the same except the setting of the velocity filed \eqref{3D-setting} is replaced by \eqref{2D-setting} and we choose $M=1000$.
  

In Fig.\ref{verify-spectral-gap-2D} and Fig.\ref{verify-spectral-gap-3D}, we plot the calculated sample variance of the first component of $\bar {\bf b}_n^i$ for the 2D random flow and 3D random flow, respectively. We observe exponential decay of the sample variance with respect to time. Moreover, we find that larger $\theta$ leads to a faster decay in the sample variance, since larger $\theta$ results in a faster decorrelation in the random flow. Our numerical results show that the exponential decay property (see Theorem \ref{thm:exponentialdecay}) holds for the random flows we studied here. 


\begin{figure}[h]
	\centering
	\begin{subfigure}{0.49\textwidth}
		\includegraphics[width = 21em]{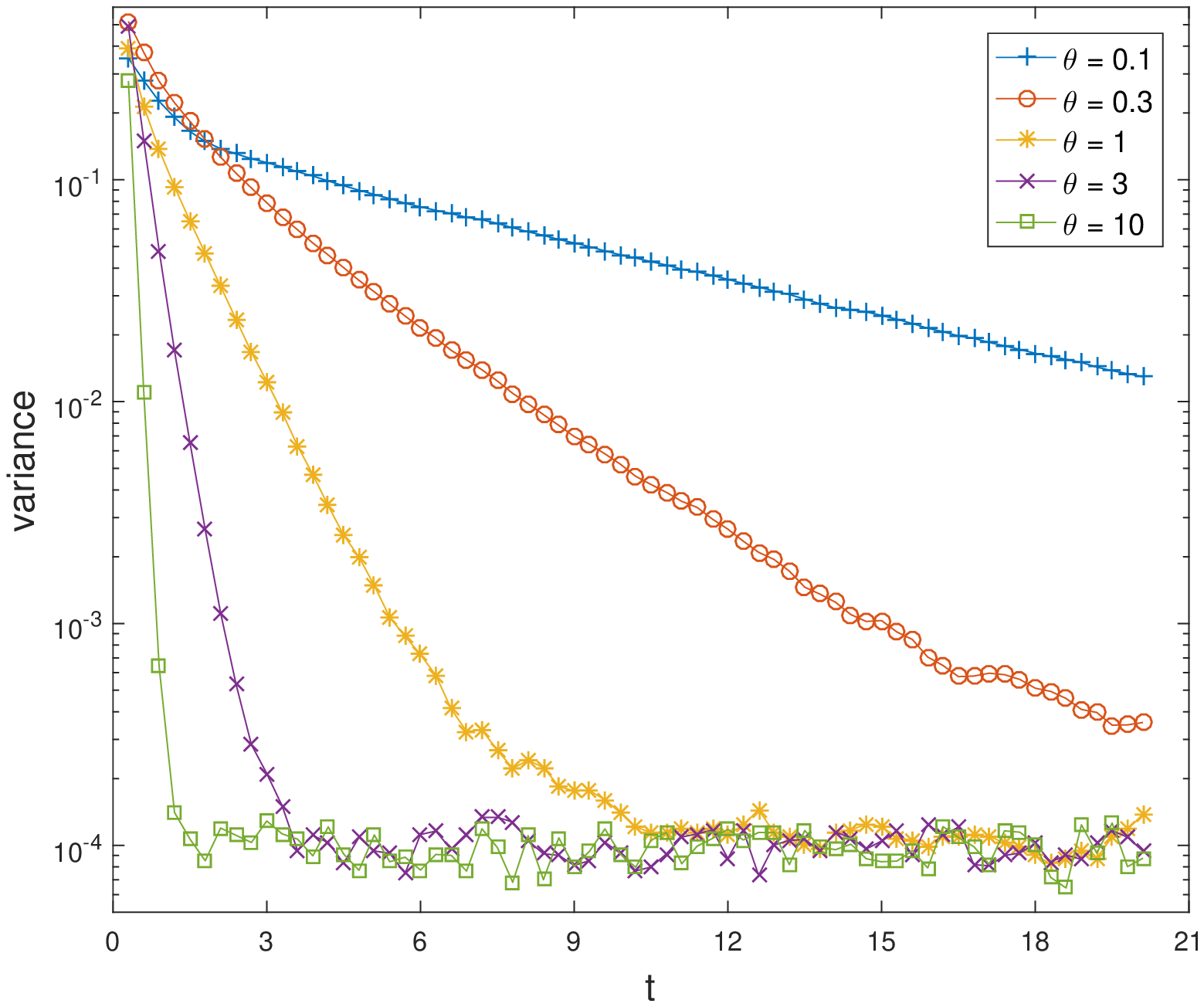}
		\caption{Calculated variance in the 2D flow along time.}
		\label{verify-spectral-gap-2D} 
	\end{subfigure}
	\begin{subfigure}{0.49\textwidth}
		\includegraphics[width = 21em]{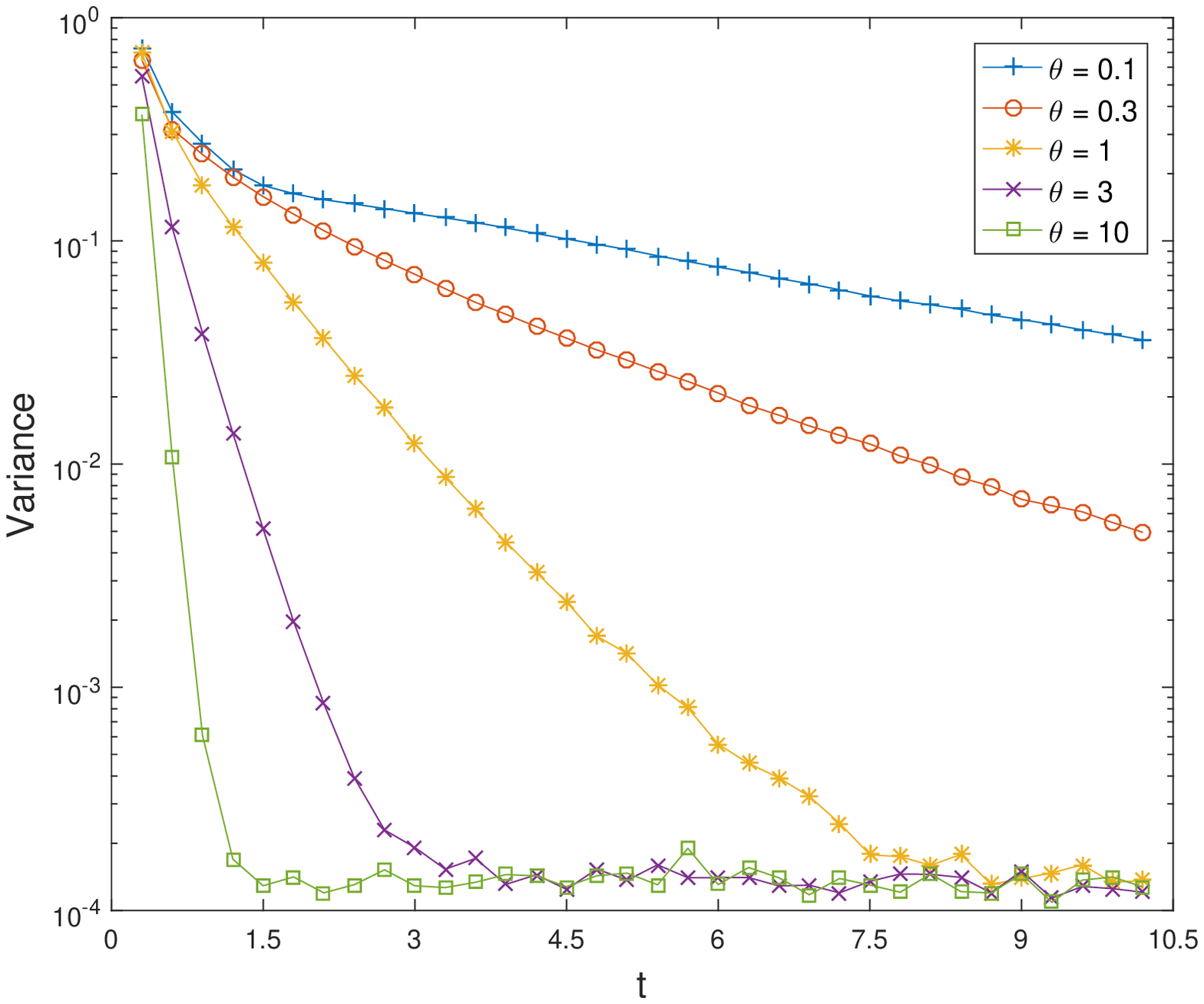}
		\caption{Calculated variance in the 3D flow along time.}
		\label{verify-spectral-gap-3D}
	\end{subfigure}
	\caption{Decay behaviors of the sample variance in 2D and 3D random flows.}
	\label{verify-spectral-gap}
\end{figure}

\subsection{Investigation of the convection-enhanced diffusion phenomenon}\label{sec:convection-enhanced}
\noindent 
In the first experiment, we study the relation between the numerical effective diffusivity $\frac{E[(  {\bf X}^{\omega}_{n,1})^2]}{2n\Delta t}$ and the parameter $\theta$, which controls the de-correlation rate in the  temporal dimension of the random flow.  In this experiment, the setting of the velocity field and the 
implementation of our method is the same as we used in Section \ref{sec:verifyspectral-gap}. We only 
choose different parameter $\theta$ to compute the numerical effective diffusivity.

In Fig.\ref{DE11-vs-theta-2D}, we plot the numerical effective diffusivity of 2D random flow obtained at different computational times, where the flow is generated with different $\theta$. The result for 3D random flow is shown in Fig.\ref{DE11-vs-theta-3D}. We find that different $\theta$ affects the mixing time of the system. When we increase the $\theta$, the system will quickly enter a mixing stage.  

\begin{figure}[h]
	\centering
	\begin{subfigure}{0.49\textwidth}
		\includegraphics[width = 21em]{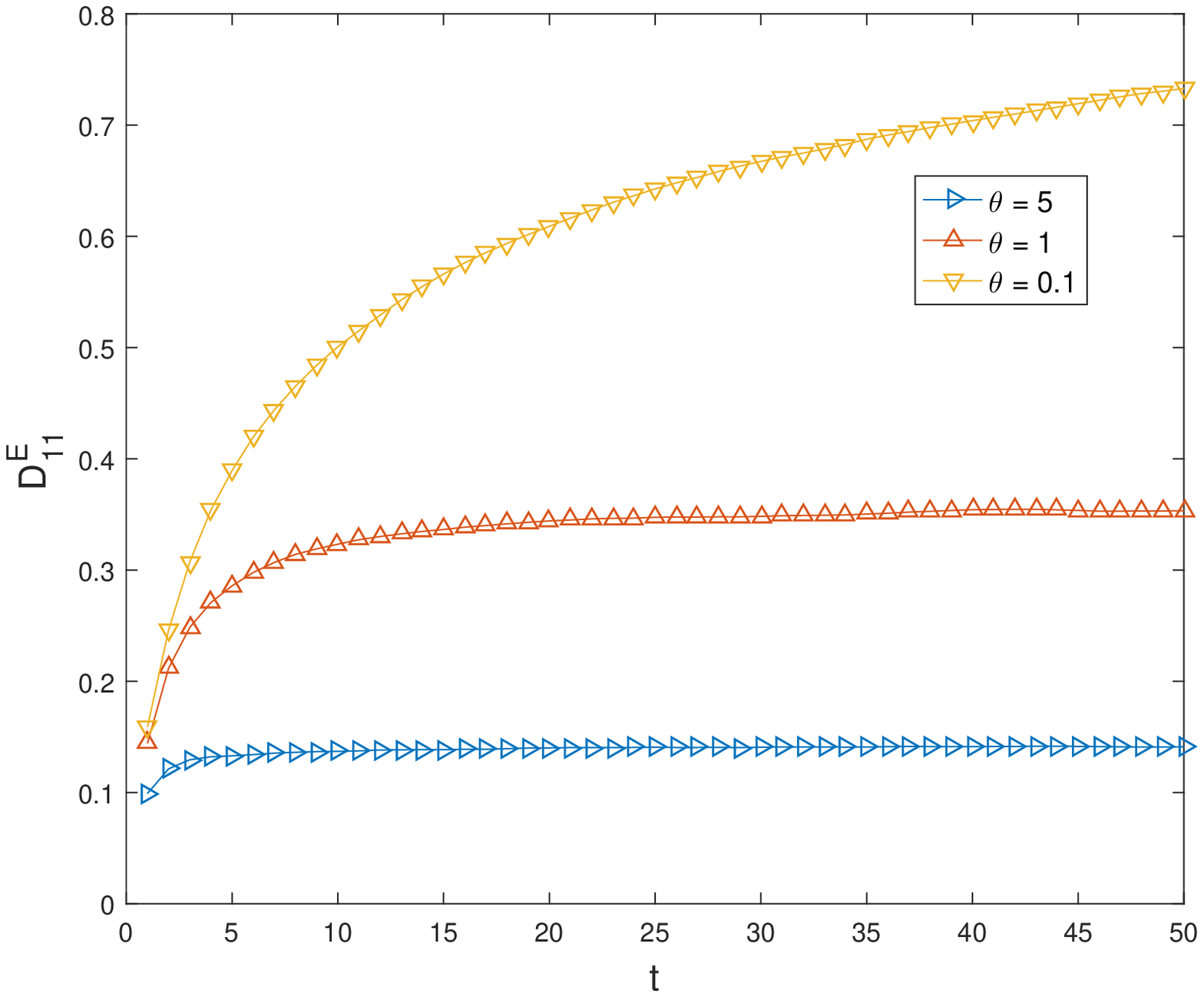}
		\caption{The quantity $\frac{E[(  {\bf X}^{\omega}_{n,1})^2]}{2n\Delta t}$ in the 2D flow along time.}
		\label{DE11-vs-theta-2D} 
	\end{subfigure}
	\begin{subfigure}{0.49\textwidth}
		\includegraphics[width = 21em]{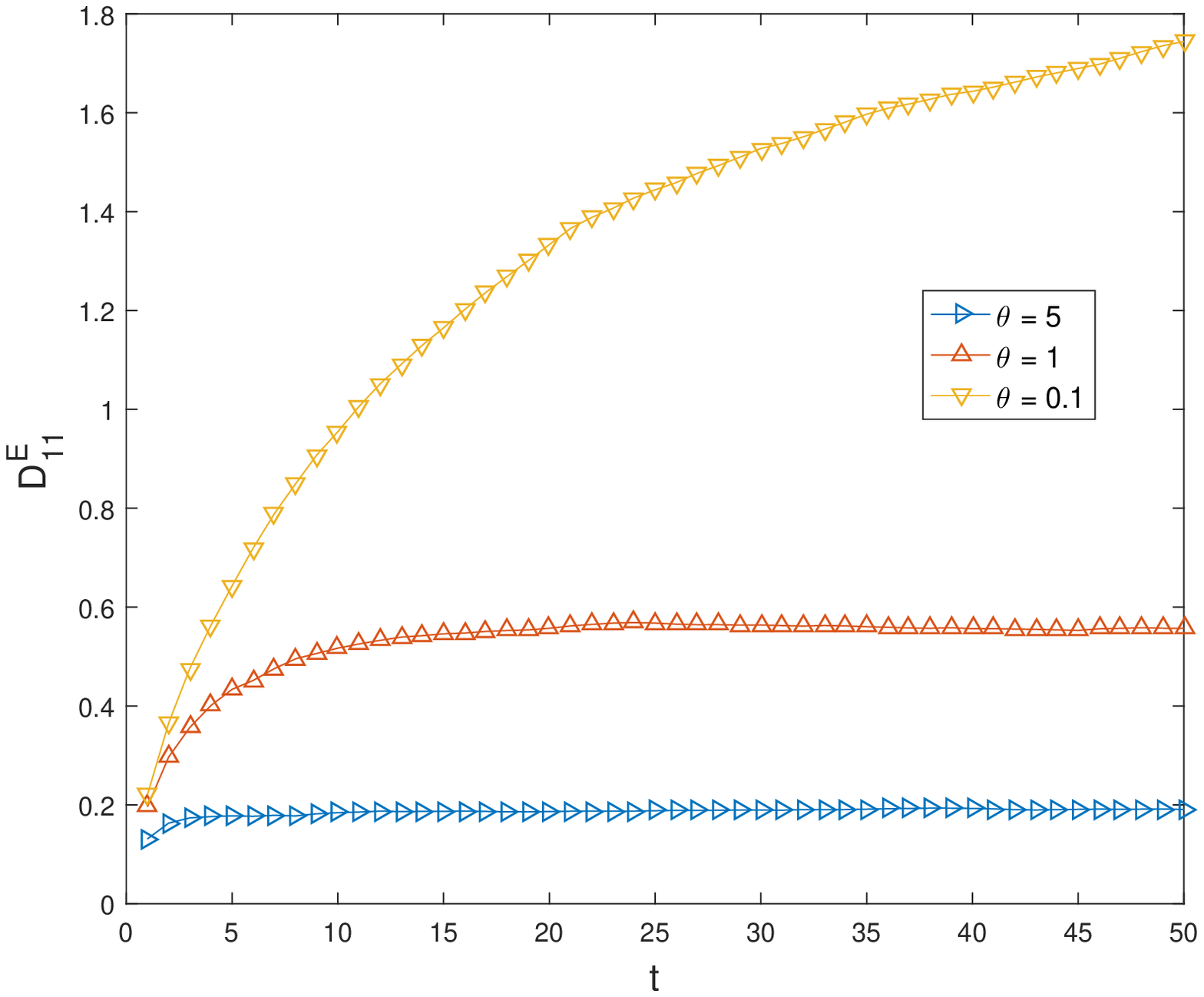}
        \caption{The quantity $\frac{E[(  {\bf X}^{\omega}_{n,1})^2]}{2n\Delta t}$ in the 3D flow along time.}
        \label{DE11-vs-theta-3D} 
	\end{subfigure}
	\caption{The relation between numerical effective diffusivity and $\theta$.}
	\label{DE11-vs-theta}
\end{figure}

In the second experiment, we choose different molecular diffusivity $\sigma$ to compute the corresponding numerical effective diffusivity, which allows us to study the existence of residual diffusivity for this random flow.  The residual diffusivity, a special yet remarkable convection-enhanced diffusion phenomenon, refers to the non-zero and finite effective diffusivity in the limit of zero molecular diffusivity as a result of a fully chaotic mixing of the streamlines. 

In the experiment for 2D random flow, we choose the time step $\Delta t = 0.05$, the velocity field were simulated with $M = 1000$, the time-mixing constant $\theta=0.1$ and the parameters in the spectral measure $\Gamma(\bf k)$ are $K = 10$ and $\alpha = 0.75$. For the 3D random flow, we choose $M=100$ and keep other parameters the same.  

Let $\kappa = \sigma^2/2$. In Fig.\ref{DE11-vs-sigma-2D}, we show the relation between numerical effective diffusivity of 2D random flow obtained at different computational times, where the result is generated with different $\sigma$.  The result for 3D random flow is shown in 
Fig.\ref{DE11-vs-sigma-3D}. We find that as $\kappa$ approaches zero, the quantity $\frac{\mathbb{E}[(\bar {\bf X}^{\omega}_{n,1})^2]}{2n\Delta t}$ converges to a non-zero (positive) constant, which indicates the existence of residual diffusivity in the random flows here. 
 
\begin{figure}[htbp]
	\centering
	\begin{subfigure}{0.49\textwidth}
		\centering
		\includegraphics[width = 21em]{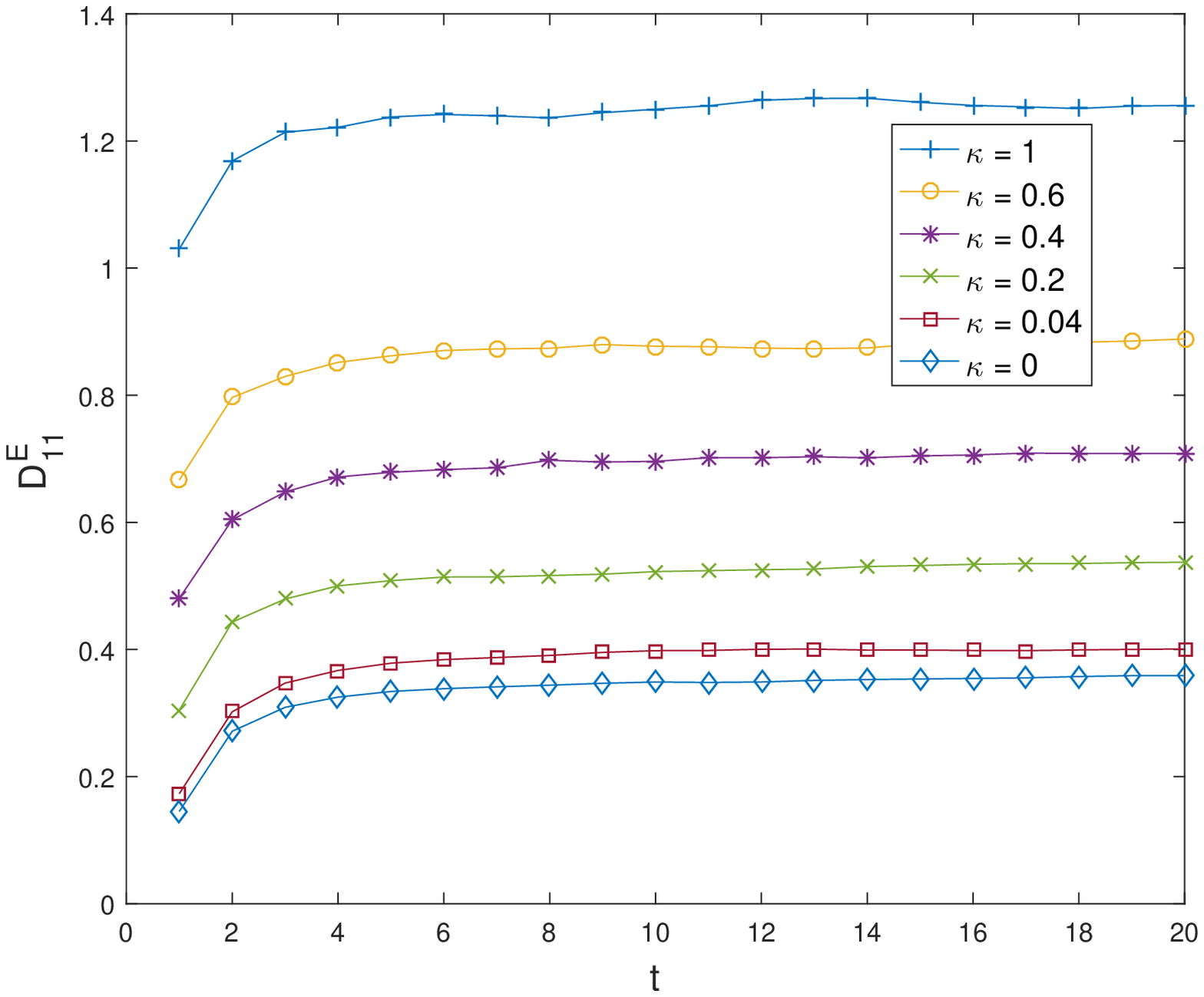}
		\caption{The quantity $\frac{\mathbb{E}[(  {\bf X}^{\omega}_{n,1})^2]}{2n\Delta t}$ 
			in the 2D flow along time.}
		\label{DE11-vs-sigma-2D}  
	\end{subfigure}
	\begin{subfigure}{0.49\textwidth}
		\centering
		\includegraphics[width = 21em]{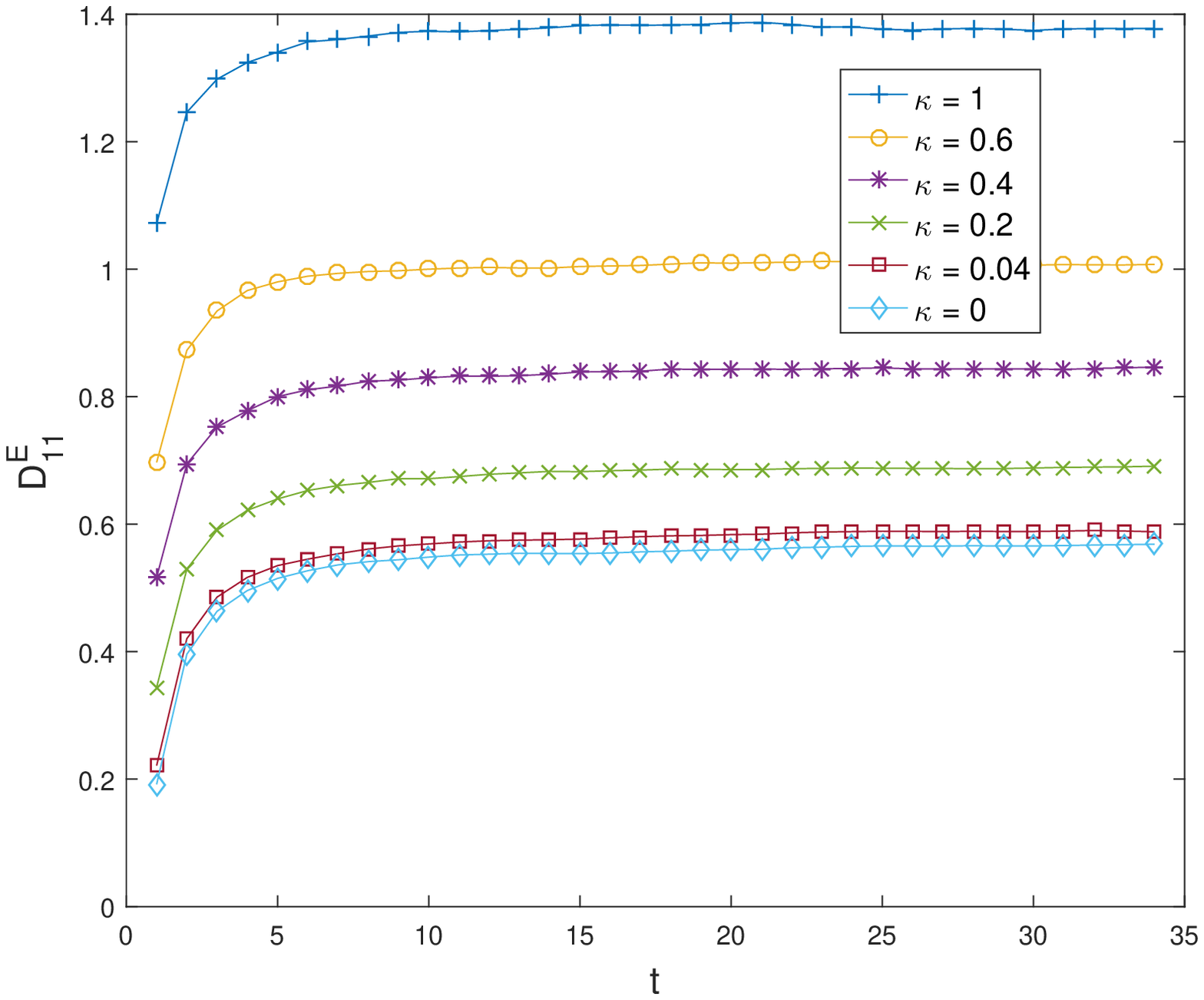}
		\caption{The quantity $\frac{\mathbb{E}[(  {\bf X}^{\omega}_{n,1})^2]}{2n\Delta t}$ 
			in the 3D flow along time.}
		\label{DE11-vs-sigma-3D}  
	\end{subfigure}
	\caption{The relation between numerical effective diffusivity and $\kappa = \sigma^2/2$, where 
		$\sigma$ is molecular diffusivity.}
	\label{DE11-vs-sigma}
\end{figure} 

In Fig.\ref{DE-to-D0-2D} and Fig.\ref{DE-to-D0-3D}, we plot the convergence behaviors of $D^E_{11}(\kappa)$ approaching $D^E_{11}(0)$ for the 2D and 3D random flows, respectively, when the systems enter a mixing stage.  The convergence behaviors when $\kappa$ approaches zero are slightly different though, both figures show that residual diffusivity exists in the random flows we studied here.

\begin{figure}[h]
	\centering
	\begin{subfigure}{0.49\textwidth}
		\centering
		\includegraphics[width = 21em]{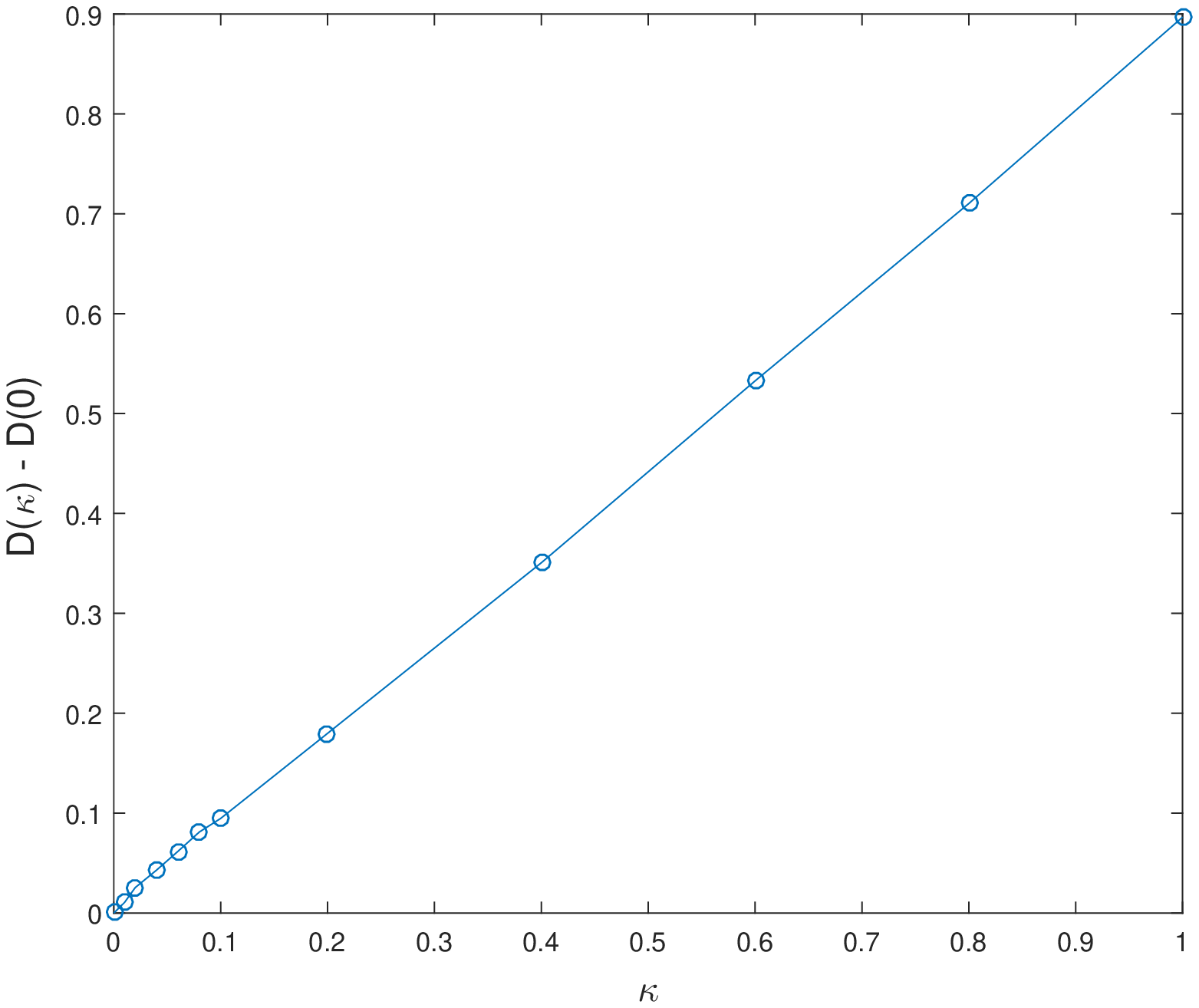}
		\caption{Results for the 2D random flow, $D_0 \approx 0.3584$.}
		\label{DE-to-D0-2D}  
	\end{subfigure}
	\begin{subfigure}{0.49\textwidth}
		\centering
		\includegraphics[width = 21em]{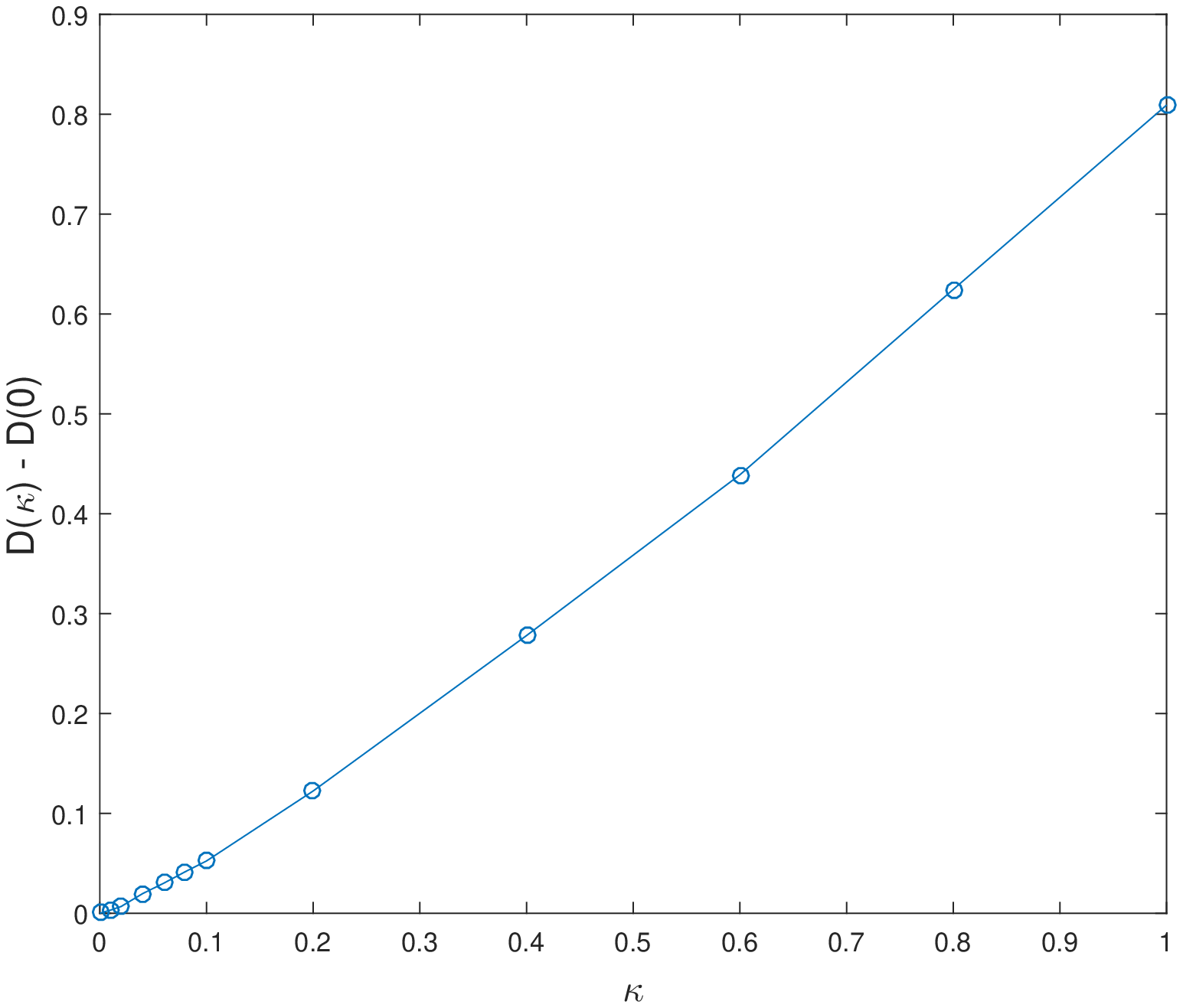}
		\caption{Results for the 3D random flow, $D_0 \approx 0.5685$.}
		\label{DE-to-D0-3D}  
	\end{subfigure}
	\caption{Convergence behaviors of $D^E_{11}(\kappa)$ approaching $D^E_{11}(0)$.}
	\label{DE-to-D0}
\end{figure} 

\section{Conclusion}
\noindent 
In this paper, we studied the numerical homogenization of passive tracer models in random flows. Based on a splitting method, we proposed stochastic structure-preserving schemes to compute the effective diffusivity of the random flows. In addition, we provided rigorous convergence analysis 
for the numerical schemes. Our error analysis is new in the sense that it is based on a probabilistic approach. Specifically, we interpreted the solution process generated by our numerical schemes as a 
Markov process. By using the ergodic theory for the solution process, we proved a sharp error estimate for our numerical schemes in computing the effective diffusivity. Finally, we present numerical results to verify the convergence rate of the proposed method for incompressible random flows both in 2D and 3D spaces. In addition, we observed the exponential decay property and investigated the residual diffusivity phenomenon in the random flows we studied here.

There are two directions we plan to explore in our future work. First, we shall extend the probabilistic approach to provide sharp convergence analysis in computing effective diffusivity for quasi-periodic time-dependent flows. This type of problem is more challenging since the corrector problem does not exist in the $L^2$ space corresponding to the invariant measure. We shall develop other techniques to address this problem.  In addition, we shall investigate the convection-enhanced diffusion phenomenon for general spatial-temporal stochastic flows \cite{Yaulandim:1998,Majda:99} and develop convergence analysis for the corresponding numerical methods.


\section*{Acknowledgement}
\noindent
The research of J. Lyu and Z. Wang are partially supported by the Hong Kong PhD Fellowship Scheme. The research of J. Xin is partially supported by NSF grants DMS-1211179, DMS-1522383, and IIS-1632935. The research of Z. Zhang is supported by Hong Kong RGC grants (Projects 27300616, 17300817, and 17300318), National Natural Science Foundation of China (Project 11601457), Seed Funding Programme for Basic Research (HKU), and Basic Research Programme (JCYJ20180307151603959) of The Science, Technology and Innovation Commission of Shenzhen Municipality. The computations were performed using the HKU ITS research computing facilities that are supported in part by the Hong Kong UGC Special Equipment Grant (SEG HKU09). 

\bibliographystyle{siam}
\bibliography{ZWpaper}
\end{document}